\documentclass[10pt,a4paper]{amsart}

\usepackage{amsmath}
\usepackage{amsthm}
\usepackage{enumerate} 
\usepackage{amssymb}
\usepackage{graphicx}
\usepackage[all]{xy}
\usepackage{hyperref}
\usepackage{aliascnt}
\usepackage{stmaryrd} 
\usepackage{mathtools}
\usepackage{verbatim} 
\usepackage{txfonts} 
\usepackage{lmodern}
\usepackage{setspace}   

\addtolength{\textwidth}{2cm}
\addtolength{\oddsidemargin}{-1cm}
\addtolength{\evensidemargin}{-1cm}
\textheight=21truecm

\newtheorem{lma}{Lemma}[section]

\newaliascnt{thmCt}{lma}
\newtheorem{thm}[thmCt]{Theorem}
\aliascntresetthe{thmCt}

\newaliascnt{corCt}{lma}
\newtheorem{cor}[corCt]{Corollary}
\aliascntresetthe{corCt}

\newaliascnt{propCt}{lma}
\newtheorem{prop}[propCt]{Proposition}
\aliascntresetthe{propCt}

\newtheorem*{thm*}{Theorem}
\newtheorem*{cor*}{Corollary}
\newtheorem*{prop*}{Proposition}

\theoremstyle{definition}

\newaliascnt{prgCt}{lma}
\newtheorem{prg}[prgCt]{}
\aliascntresetthe{prgCt}

\newaliascnt{dfnCt}{lma}
\newtheorem{dfn}[dfnCt]{Definition}
\aliascntresetthe{dfnCt}

\newaliascnt{rmkCt}{lma}
\newtheorem{rmk}[rmkCt]{Remark}
\aliascntresetthe{rmkCt}

\newaliascnt{rmksCt}{lma}

\aliascntresetthe{rmksCt}

\newaliascnt{ntnCt}{lma}

\aliascntresetthe{ntnCt}

\newaliascnt{ntnsCt}{lma}

\aliascntresetthe{ntnsCt}

\newaliascnt{qstCt}{lma}

\aliascntresetthe{qstCt}

\newaliascnt{prblCt}{lma}

\aliascntresetthe{prblCt}

\newaliascnt{exaCt}{lma}

\aliascntresetthe{exaCt}

\newcommand{\M}{\mathrm{M}}

\newcommand{\T}{\mathbb{T}}
\newcommand{\N}{\mathbb{N}}

\newcommand{\R}{\mathbb{R}}
\newcommand{\Z}{\mathbb{Z}}
\newcommand{\K}{\mathrm{K}}

\DeclareMathOperator{\rank}{rank}

\DeclareMathOperator{\card}{card}

\DeclareMathOperator{\spectrum}{sp}

\newcommand{\CatCa}{C^*}
\newcommand{\CatPoM}{\mathrm{PoM}}

\DeclareMathOperator{\Cu}{Cu}

\DeclareMathOperator{\NCCW}{NCCW}
\DeclareMathOperator{\CW}{CW}
\DeclareMathOperator{\AI}{AI}
\DeclareMathOperator{\A}{A}
\DeclareMathOperator{\UHF}{UHF}

\DeclareMathOperator{\AF}{AF}
\DeclareMathOperator{\Lsc}{Lsc}

\DeclareMathOperator{\supp}{supp}
\DeclareMathOperator{\Hom}{Hom}
\DeclareMathOperator{\id}{id}

\begin{document}
\onehalfspacing
\title{Uniformly Based Cuntz semigroups and approximate intertwinings}
\author{Laurent Cantier}

\address{Laurent Cantier,
Departament de Matem\`{a}tiques \\
Universitat Aut\`{o}noma de Barcelona \\
08193 Bellaterra, Barcelona, Spain
}
\email[]{lcantier@mat.uab.cat}

\thanks{The author was supported by MINECO through the grant BES-2016-077192-POP and partially supported by the grants MDM-2014-0445 and MTM-2017-83487 at the Centre de Recerca Matem\`atica in Barcelona.}
\keywords{Cuntz semigroups, Uniform Basis, Cu-metric, Approximate intertwinings}

\begin{abstract}
We study topological aspects of the category of abstract Cuntz semigroups, termed $\Cu$. We provide a suitable setting in which we are able to uniformly control how to approach an element of a $\Cu$-semigroup by a rapidly increasing sequence. This approximation induces a semimetric on the set of $\Cu$-morphisms, generalizing $\Cu$-metrics that had been constructed in the past for some particular cases. Further, we develop an approximate intertwining theory for the category $\Cu$. Finally, we give several applications such as the classification of unitary elements of any unital $\AF$-algebra by means of the functor $\Cu$.
\end{abstract}
\maketitle

\section{Introduction}
In the past decade, the Cuntz semigroup has emerged as a valuable asset for classification of both simple and non-simple $\CatCa$-algebras. The regain of interest for this invariant started around 15 years ago, when Toms exhibited a counter-example to the original Elliot conjecture using precisely the Cuntz semigroup, proving in the process that within this semigroup, lies crucial information for classification of $\CatCa$-algebras; see \cite{T08}. Nevertheless, the original construction introduced by Cuntz in the late 70's (see \cite{C78}), was not well-suited for classification of $\CatCa$-algebras. More precisely, it had already been shown that the original Cuntz semigroup did not preserve inductive limits of $\CatCa$-algebras. Back to the 00's, a short while after Toms counter-example, Coward, Elliott and Ivanescu presented a new version of the Cuntz semigroup, in \cite{CEI08}, together with an abstract category based on order-theoretical axioms, termed $\Cu$. This completed version of the Cuntz semigroup, referred nowadays to as the Cuntz semigroup, is indeed a more adapted invariant since it defines a continuous functor $\Cu:\CatCa\longrightarrow \Cu$, as shown in \cite{APT14}.  Both the category $\Cu$ and the functor $\Cu$ have been studied extensively (e.g. in \cite{APS11}, \cite{APT14}, \cite{APT09}, \cite{CRS10}, \cite{RS19}, \cite{T19}) and the latter has appeared to be a powerful invariant. Let us recall some of the most notable results that have unraveled since then. First, in addition to being a continuous functor (that is, a functor that preserves inductive limits), the functor $\Cu$ also preserves ultraproducts (see \cite{APT19}). It has also been shown that the category $\Cu$ is closed (see \cite{APT20} and \cite{APT20b}), complete and cocomplete (see \cite{APT19}). A highly relevant result in the structure theory of Cuntz semigroups is \cite{APRT21}, where it is shown that the Cuntz semigroups of stable rank one $\CatCa$-algebras satisfy Riesz interpolation.
As far as the classification of $\CatCa$-algebras is concerned, it has been shown in \cite{ADPS14} that one can functorially recover the original Elliott invariant of any unital, simple, nuclear, finite, $\mathcal{Z}$-stable $\CatCa$-algebra $A$ from the Cuntz semigroup of $\mathcal{C}(\T)\otimes A$. Then, Robert showed in \cite{R12} that the functor $\Cu$ classifies $^*$-homomorphisms from any inductive limit of one-dimensional $\NCCW$ complexes with trivial $\K_1$-groups to any $\CatCa$-algebra of stable rank one. In particular, $\Cu$ is a complete invariant for the latter domain class. Note that the Cuntz semigroup, although being a key tool for the non-simple setting, is still not fully endorsed by all and has a few drawbacks: its computation is in general quite complex (not to say nearly impossible), added to the fact that it only contains tracial data and $\K_0$ information. To address this lack of $\K_1$ information, the author has been introducing and intensively studying a unitary version of the Cuntz semigroup in \cite{C20one} and \cite{C20two}, that seems promising towards future classification results of non-simple $\CatCa$-algebras beyond the trivial $\K_1$ setting. 

Overall, it is only natural that abstract Cuntz semigroups, often termed $\Cu$-semigroups, have become the target of further study. This paper tends to investigate more in depth topological aspects of such semigroups. We remark that these objects have an underlying algebraic structure, since they are positively ordered monoids, but they are also equipped with an order-theoretic topology, given by axioms (O1) and (O2) and built upon an auxiliary relation called the \emph{way-below} or the \emph{compact-containment} relation. For instance, (O1) is an analogue version of completeness. We recall that the compact-containment relation is entirely determined by the order of the monoid.

Topological aspects of $\Cu$-semigroups have already been exploited in the past. E.g. a metric on $\Hom_{\Cu}(\Lsc(]0,1],\overline{\N}),T)$ has been constructed in \cite{RS09} and a notion of countably-based $\Cu$-semigroups has been defined (as an analogue of separability). But at the time of writing, it seems there is still much to be explored and conceptualized. We point out that the starting point for this paper could be traced back to a paper by Gong, Jiang and Li (see  \cite{GJL20}) in which they introduce an extended version of the Elliott invariant. More particularly, they construct two $\CatCa$-algebras which were indistinguishable using the current Elliott invariant, based on $\K$-theoretical tools, but which are not isomorphic since they do not agree on the new ingredient they have added to this augmented invariant. They also claim that one could use tools developed on the Cuntz semigroup to conclude that the latter was also unable to differentiate these two $\CatCa$-algebras, constructed as inductive limits of one-dimensional $\NCCW$ complexes. To my knowledge, I would venture to say that very little has been done about an analogous version of an approximate intertwining theorem for the category $\Cu$. Therefore, it appeared to be relevant to dig in this direction. All the more so, given that the Cuntz semigroup has emerged as a noteworthy invariant for $\CatCa$-algebras and intertwinings theorems have always played a key role in proofs of classification theorems. 

\textbf{Organization of the paper.} In a first part, we define a topological notion of \emph{uniform basis} for $\Cu$-semigroups, that resembles -from afar- the notion of second-countability for topological spaces. We also establish an abstract notion of comparison of $\Cu$-morphisms on a given subset of the domain. Through the use of these two notions, we are able to construct a suitable setting in which the following theorem is proven for inductive sequences of uniformly based $\Cu$-semigroups.

\begin{thm}
Let $(S_i,\sigma_{ij})_{i\in\N}$ and $(T_i,\tau_{ij})_{i\in\N}$ be two inductive sequences of uniformly based $\Cu$-semigroups. Let $(S,\sigma_{i\infty})_i$ and $(T,\tau_{i\infty})_i$ be their respective inductive limits in $\Cu$. 
For any $i\in\N$, consider a uniform basis of $S_i$, respectively of $T_i$, that we both denote by $(M_n,\epsilon_n)_n$. (Without referring to the index $i$ to ease notations and the basis we refer to is always clear.)

Suppose there exist $\Cu$-morphisms $c_i$ and $d_i$, for any $i\in\N$, as follows:
\[
\xymatrix{
\dots\ar[r]& S_{i}\ar[d]^{c_{i}}\ar[rr]^{\sigma_{ii+1}} && S_{i+1}\ar[rr]^{\sigma_{i+1i+2}}\ar[rr]\ar[d]^{c_{i+1}} &&\dots \\
\dots\ar[r] &T_{i}\ar[urr]_{d_i}\ar[rr]_{\tau_{ii+1}} && T_{i+1}\ar[rr]_{\tau_{i+1i+2}}\ar[urr]_{d_{i+1}}\ar[rr]&&\dots
} 
\]
and assume there are strictly increasing sequences of natural numbers $(n_i)_i$ and $(m_i)_i$ such that:

(i) For any $i\leq j$, we have $\sigma_{ij}(M_{n_j})\subseteq M_{n_j}$ and $\tau_{ij}(M_{m_j})\subseteq M_{m_j}$, where $\sigma_{ij}:=\sigma_{j-1j}\circ ...\circ\sigma_{ii+1}$ and $\tau_{ij}:=\tau_{j-1j}\circ ...\circ\tau_{ii+1}$.

(ii) $d_{i}\circ c_i\underset{M_{n_i}}\approx\sigma_{ii+1}$ and $c_{i+1}\circ d_i\underset{M_{m_i}}\approx\tau_{ii+1}$, for any $i\in\N$.

(iii) For any $i\in\N$, we also have $c_i(M_{n_i})\subseteq M_{m_i}$ and $d_i(M_{m_i})\subseteq M_{n_{i+1}}$.

Then $S\simeq T$ as $\Cu$-semigroups.
\end{thm}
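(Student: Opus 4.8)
The plan is to run the classical Elliott two-sided approximate-intertwining argument inside $\Cu$, using the uniform bases to keep the approximations under uniform control so that they become exact upon passing to the limit. The first step is to upgrade the hypotheses in (ii) to approximate compatibility of $(c_i)_i$ and $(d_i)_i$ with the connecting maps. On $M_{n_i}$ one has $\sigma_{ii+1}\approx d_i\circ c_i$, hence $c_{i+1}\circ\sigma_{ii+1}\approx c_{i+1}\circ d_i\circ c_i$; since $c_i(M_{n_i})\subseteq M_{m_i}$ by (iii), the second approximation in (ii) applies on $c_i(M_{n_i})$ and yields $c_{i+1}\circ d_i\circ c_i\approx\tau_{ii+1}\circ c_i$. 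Chaining gives $\tau_{ii+1}\circ c_i\approx c_{i+1}\circ\sigma_{ii+1}$ on $M_{n_i}$. The symmetric computation, using $\tau_{ii+1}\approx c_{i+1}\circ d_i$ on $M_{m_i}$ together with $d_i(M_{m_i})\subseteq M_{n_{i+1}}$, produces $\sigma_{i+1i+2}\circ d_i\approx d_{i+1}\circ\tau_{ii+1}$ on $M_{m_i}$. The point of (iii) is precisely to guarantee that the image of each basis set lands in the basis set on which the next approximation is declared valid, so that the two estimates in (ii) can be composed.

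Next I would promote these approximate squares to honest limit morphisms. Using the inductive-limit structure of $\Cu$ and the uniform bases, the coherent family $(\tau_{i\infty}\circ c_i)_i$ assembles, modulo the shrinking tolerances $\epsilon_n$, into a single $\Cu$-morphism $c\colon S\to T$ characterized by $c\circ\sigma_{i\infty}=\tau_{i\infty}\circ c_i$; symmetrically one obtains $d\colon T\to S$ with $d\circ\tau_{i\infty}=\sigma_{i+1\infty}\circ d_i$. Condition (i), the invariance of the bases along the towers, is what makes the chosen basis sets compatible from one stage to the next, so that a single coherent assignment exists; axioms (O1) and (O2), i.e.\ preservation of suprema of rapidly increasing sequences and of the way-below relation, guarantee that the resulting maps are genuine $\Cu$-morphisms rather than mere monoid maps.

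Finally I would run the telescoping zigzag to identify $c$ and $d$ as mutual inverses. Using the characterizations above, $d\circ c\circ\sigma_{i\infty}=\sigma_{i+1\infty}\circ(d_i\circ c_i)$, which on $M_{n_i}$ approximates $\sigma_{i+1\infty}\circ\sigma_{ii+1}=\sigma_{i\infty}$; as every element of $S$ is a supremum of a rapidly increasing sequence drawn from $\bigcup_i\sigma_{i\infty}(M_{n_i})$, this forces $d\circ c=\id_S$, and symmetrically $c\circ d=\id_T$. Hence $c$ is a $\Cu$-isomorphism and $S\simeq T$. I expect the main obstacle to lie in the promotion step: because $\Cu$-morphisms must respect the way-below relation and preserve suprema of increasing sequences, $c$ cannot simply be defined elementwise and checked for continuity afterward. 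One must verify that the approximants selected through the basis assemble into a rapidly increasing sequence whose supremum is independent of the choices, and that the resulting assignment is additive, order-preserving and (O1)--(O2)-compatible. This is exactly where the uniform---rather than merely pointwise---nature of the basis is indispensable, and it is the technical heart of the theorem.
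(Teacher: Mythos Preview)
Your overall architecture is right and matches the paper's, but two of your intermediate claims cannot hold as stated. First, the characterization $c\circ\sigma_{i\infty}=\tau_{i\infty}\circ c_i$ is impossible as an \emph{equality}: if the squares at the finite stages only commute up to $\approx$ on $M_{n_i}$, no single limit map $c$ can make all these triangles exactly commute. The paper avoids this by defining $\gamma_i:S_i\to T$ not as $\tau_{i\infty}\circ c_i$ but by the explicit formula $\gamma_i(s)=\sup_j\tau_{j+1\infty}\circ c_{j+1}\circ\sigma_{ij+1}(\epsilon_{n_j}(s))$, which bakes the approximation into the definition and does satisfy $\gamma_i=\gamma_j\circ\sigma_{ij}$ exactly. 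Second, your ``chaining'' step is not free: the relation $\underset{\Lambda}{\approx}$ is not transitive on a fixed $\Lambda$, since from $\alpha(g')\ll\beta(g)$ and $\beta(g')\ll\gamma(g)$ you cannot conclude $\alpha(g')\ll\gamma(g)$ without an interpolant $g'\ll g''\ll g$ inside $\Lambda$, and $M_{n_i}$ need not provide one. The paper never attempts to compose two $\approx$-relations at the same level; instead it unwinds the zigzag $d_{j'+1}\circ c_{j'+1}\circ\cdots\circ d_{j+1}\circ c_{j+1}$ directly, using (U4) to feed in a fresh $\ll$-step $\epsilon_{n_j}(s)\ll\epsilon_{n_{j+1}}(s)$ at each application of (ii).

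A third point you should anticipate: the limit maps produced this way are only \emph{generalized} $\Cu$-morphisms (they preserve suprema and order but not a priori $\ll$). The paper does not try to verify preservation of $\ll$ directly; instead it shows $\delta\circ\gamma=\id_S$ and $\gamma\circ\delta=\id_T$ at the $\CatPoM$ level and then invokes the lemma that any $\CatPoM$-isomorphism between $\Cu$-semigroups is automatically a $\Cu$-isomorphism. Your closing paragraph correctly identifies this ``promotion'' as the crux, but the resolution is not to check (O1)--(O2) for $c$ head-on; it is to get the inverse first and let the order-theoretic axioms do the rest.
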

In a second part, we focus on building uniform bases of concrete Cuntz semigroups, that is, that can be realized as Cuntz semigroups of a $\CatCa$-algebra. More concretely, we construct a uniform basis \emph{of size $q$}, where $q$ is any supernatural number, for any $\Cu$-semigroup of the form $\Lsc(X,\overline{\N}^r)$, where $X$ is a finite graph and $r\in\N$, and for any $\Cu(A)$, where $A$ is a one-dimensional $\NCCW$ complex.

In a third part, we apply all the above to define topologically equivalent semimetrics on $\Hom_{\Cu}(S,T)$, where $S$ is a uniformly based $\Cu$-semigroup. Finally, the next theorem is showing the uniqueness part of the following \emph{conjecture}: the functor $\Cu$ classifies unitary elements of any unital $\AF$-algebra, in the sense that $\Cu$ classifies any $^*$-homomorphism from $\mathcal{C}(\T)$ to any unital $\AF$-algebra.

\begin{thm}
Let $A$ be a unital $\AF$-algebra and let $\varphi_{u},\varphi_{v}:\mathcal{C}(\T)\longrightarrow A$ be unital $^*$-homo\-morphisms. If $\Cu(\varphi_u)=\Cu(\varphi_v)$, then $\varphi_u$ and $\varphi_v$ are approximately unitarily equivalent.
\end{thm}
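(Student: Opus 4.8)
The plan is to reduce this uniqueness statement to an invariant-level computation over the Cuntz semigroup, exploiting the fact that a unital $^*$-homomorphism $\varphi:\mathcal{C}(\T)\longrightarrow A$ is nothing more than a unitary $u=\varphi(z)\in A$ (where $z$ is the canonical generator of $\mathcal{C}(\T)$). Thus the content is: if two unitaries $u,v\in A$ induce the same map at the level of $\Cu$, they are approximately unitarily equivalent in the unital $\AF$-algebra $A$. Since $A$ is $\AF$, its $\K_1$ vanishes, so a unitary's homotopy/conjugacy data lives entirely in the spectral distribution. The key observation I would make first is that $\mathcal{C}(\T)$ is an inductive limit of one-dimensional $\NCCW$ complexes (indeed $\mathcal{C}(\T)$ is itself a one-dimensional $\NCCW$ complex, and its building data is precisely controlled by the uniform basis of $\Cu(\mathcal{C}(\T))=\Lsc(\T,\overline{\N})$ constructed in the second part of the paper).

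First I would translate $\Cu(\varphi_u)=\Cu(\varphi_v)$ into a statement about how the two unitaries distribute spectrally against the (tracial / dimension-function) data encoded in $\Cu(A)$. Concretely, applying the functor $\Cu$ to $\varphi_u,\varphi_v:\mathcal{C}(\T)\to A$ yields two $\Cu$-morphisms $\Lsc(\T,\overline{\N})\to\Cu(A)$, and equality of these morphisms means $u$ and $v$ assign the same Cuntz class to every (open-support) positive element pulled back from $\mathcal{C}(\T)$. Since $A$ is $\AF$, $\Cu(A)$ is an abelian ordered monoid of the form $\Lsc$-type / $\V(A)$-type data with only $\K_0$ content, and the relevant invariant reduces to the collection of spectral projections / spectral measures of $u$ and $v$ read off by every lower-semicontinuous function on $\T$. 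I expect to show that agreement of $\Cu(\varphi_u)$ and $\Cu(\varphi_v)$ forces the unitaries to have, for each trace (or each irreducible finite-dimensional summand in an $\AF$ filtration of $A$), identical spectral multiplicity data up to arbitrarily fine partitions of $\T$.

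The second step is to run an approximate-intertwining argument of the kind made available by the first theorem of the paper. I would write $A=\varinjlim A_i$ as an increasing union of finite-dimensional $\CatCa$-algebras (with each $A_i$ a finite direct sum of matrix algebras) and, using the uniform basis of $\Cu(\mathcal{C}(\T))=\Lsc(\T,\overline{\N})$ of a suitable size $q$, approximate $u$ and $v$ to within $\epsilon$ by unitaries $u_i,v_i$ lying in (or close to) the finite-dimensional subalgebra $A_i$. Within a finite-dimensional $\CatCa$-algebra, two unitaries are unitarily equivalent precisely when they have the same eigenvalue list with multiplicities in each matrix block; the $\Cu$-equality, read through the uniform basis, guarantees these multiplicity lists agree up to a fine partition of the circle, hence $u_i$ and $v_i$ are unitarily equivalent in $A_i$ up to a small perturbation of the eigenvalues (bounded by the mesh of the partition, which I can take $\to 0$). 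Conjugating by the resulting unitaries and passing to the limit via the approximate intertwining machinery yields a sequence of unitaries $w_n\in A$ with $\|w_n u w_n^*-v\|\to 0$, which is exactly approximate unitary equivalence.

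The main obstacle, and the step requiring the genuine input of this paper rather than classical $\AF$ folklore, is controlling the approximation \emph{uniformly}: I must ensure that the spectral-multiplicity data extracted from the $\Cu$-morphism at finite partitions of $\T$ can be matched \emph{simultaneously} across all the finite-dimensional building blocks and all scales, so that the perturbations of eigenvalue lists can be made arbitrarily small in a coherent way as the partition refines. This is precisely where the \emph{uniform} nature of the basis $(M_n,\epsilon_n)_n$ (as opposed to an unorganized basis) is essential: it provides the compatible error control $\epsilon_n$ that lets the block-by-block unitary conjugations be assembled into a single globally convergent intertwining rather than a collection of unrelated local equivalences. A delicate point will be handling the fact that the Cuntz class only sees open supports (strict comparison data) and not the precise eigenvalues, so I will need to argue that agreement of all open-support classes across a cofinal family of partitions nonetheless pins down the spectral measures finely enough; the uniform basis of $\Lsc(\T,\overline{\N})$ is exactly the device that makes ``cofinal family of partitions'' into a controlled, quantitative statement.
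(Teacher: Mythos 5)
Your overall architecture matches the paper's: view the homomorphisms as unitaries, pass to a finite-dimensional stage of the $\AF$ filtration, match eigenvalue lists there up to a perturbation controlled by the mesh of a partition of $\T$, conjugate, and assemble. But two steps that carry the real weight of the proof are left as assertions, and both require a genuine idea rather than bookkeeping. First, the hypothesis $\Cu(\varphi_u)=\Cu(\varphi_v)$ lives in the limit algebra $A$, whereas the eigenvalue matching must be performed in a finite-dimensional block $A_j$. After replacing $u,v$ by nearby unitaries $u_i,v_i\in A_i$, their induced $\Cu$-morphisms $\alpha_i,\beta_i$ into $\Cu(A_i)$ need not compare at all; one only knows that $\sigma_{i\infty}\circ\alpha_i$ and $\sigma_{i\infty}\circ\beta_i$ are close in the semimetric. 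Pushing a strict comparison $\underset{\Lambda_{n+1}}{\approx}$ from the limit back down to some finite stage $j\geq i$ is exactly \autoref{thm:liftAF}, and it uses property (L2) of inductive limits in $\Cu$ together with the \emph{finiteness} of $\Lambda_n$ (so that finitely many applications of (L2) can be absorbed into a single stage), plus the interpolation \autoref{lma:ccelmnt} to trade one level of the basis for a strict inequality. Your proposal never addresses this descent, and without it the finite-dimensional matching has nothing to apply to.

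Second, in the finite-dimensional block the comparison data only gives inequalities of the form $\rank$ of the spectral projection of $u$ over an arc $\leq$ $\rank$ of the spectral projection of $v$ over a slightly enlarged arc. Converting this family of arc-counting inequalities into an actual bijection between the eigenvalue lists that moves each eigenvalue by at most $2/2^n$ is not automatic; the paper does it by verifying the marriage condition and invoking Hall's theorem (\autoref{thm:marriage} and \autoref{thm:liftFD}). You flag this as ``a delicate point'' but offer no mechanism for it, and the statement that the multiplicity lists ``agree up to a fine partition, hence the unitaries are unitarily equivalent up to a small perturbation'' is precisely the implication that needs proof. Finally, a smaller inaccuracy: the assembly step in the paper does not invoke the two-sided approximate intertwining theorem at all; it uses the $2$-relaxed triangle inequality of the discrete $\Cu$-semimetric $dd_{\Cu,\mathcal{B}_{2^\infty}}$ to propagate the comparison from $(\alpha,\beta)$ to $(\alpha_i,\beta_i)$, so attributing the convergence of the conjugating unitaries to the intertwining machinery misidentifies the tool being used.
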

 
\textbf{Acknowledgments.} 
The author would like to thank L. Robert for a fruitful collaboration in the Spring of 2018 at the University of Louisiana Lafayette, where he drafted the author in the classification of unitary elements of $\AF$-algebras. The author would also like to mention that the approximate intertwining theorem has been greatly inspired by analogous versions for $\CatCa$-algebras and order unit Spaces, thoroughly detailed in \cite{L01} and in \cite{Th94} respectively. 
Finally, the author is greatly indebted to the referee who raised a numerous amount of relevant points that have reshaped the manuscript in a better way.

\break\section{Preliminaries} 
We use $\CatPoM$ to denote the category of positively ordered monoids. \\
\textbf{The category $\Cu$.}
Let $(S,\leq)$ be a positively ordered monoid and let $x,y$ in $S$. For any two elements $x,y$ of $S$, we say that $x$ is \emph{way-below} $y$ (or $x$ is \emph{compactly-contained} in $y$) if for all increasing sequences $(z_n)_n$ of $S$ that have a supremum (in $S$), if $\sup z_n\geq y$ then there exists $k\in\N$ such that $z_k\geq x$. This is an auxiliary relation on $S$ called the \emph{way-below relation} or the \emph{compact-containment relation}. In particular $x\ll y$ implies $x\leq y$ and we say that $x$ is a \emph{compact element} whenever $x\ll x$. 

We say that $S$ is an abstract Cuntz semigroup, or a $\Cu$-semigroup, if it satisfies the following order-theoretic axioms introduced in \cite{CEI08}: 

(O1): Every increasing sequence of elements in $S$ has a supremum. 

(O2): For any $x\in S$, there exists a $\ll$-increasing (or \textquoteleft rapidly increasing\textquoteright) sequence  $(x_n)_{n\in\N}$ in $S$ such that $\sup\limits_{n\in\N} x_n= x$.

(O3): Addition and the compact-containment relation are compatible.

(O4): Addition and suprema of increasing sequences are compatible.

A \emph{$\Cu$-morphism} is a positively ordered monoid morphism that preserves the compact-contain\-ment relation and suprema of increasing sequences. The category of abstract Cuntz semigroups, written $\Cu$, is the subcategory of $\CatPoM$ whose objects are $\Cu$-semigroups and morphisms are $\Cu$-morphisms. 

A $\Cu$-semigroup $S$ is \emph{countably-based} if it contains a countable subset $B\subseteq S$ such that any element is the supremum of a $\ll$-increasing sequence of $B$. Equivalently, for any $s',s\in S$ such that $s'\ll s$, then there exists $b\in B$ such that $s'\ll b\ll s$. We often refer to $B$ as a (countable) \emph{basis} of $S$. As stated earlier, the topology in $S$ (called the Scott topology; see for instance \cite[Definition 3.2]{C20two}), is based upon the compact-containment relation. Thus, the notion of countably-based can be seen as an analogue of separability while the axiom (O1) can be interpreted as completeness. As a matter of fact, a subset $F\subseteq S$ can be \emph{completed} (respectively \emph{$\ll$-completed}) as follows: 
\[
\begin{array}{ll}
\overline{F}^\leq:= \{\sup\limits_m (m_n)_n\mid (m_n)_n\text{ is an increasing sequence of } F\}.\\
\overline{F}^\ll:= F\cup\{\sup\limits_m (m_n)_n\mid (m_n)_n\text{ is a } \ll\text{-increasing sequence of } F\}.
\end{array}
\]
Naturally we have that $F\subseteq\overline{F}^\ll\subseteq \overline{F}^\leq\subseteq S$. Also observe that $B$ is a basis of $S$ if and only if $\overline{B}^\ll= \overline{B}^\leq= S$. In this case, we might say that $B$ is \emph{dense}, or \emph{$\ll$-dense}, in $S$. For instance, the set $S_\ll:=\{s\in S\mid \text{there exists } t\in S \text{with } s\ll t\}$ is a $\CatPoM$ that is always dense in $S$. 
Finally, we mention that any separable $\CatCa$-algebra $A$ induces a countably-based Cuntz semigroup $\Cu(A)$. In this case, $\Cu(A)$ has a largest element $\infty_A:=\sup\limits_{n\in\N} n[s_A]$, where $s_A$ is any strictly positive (or full positive) element of $A$, and we have that $\Cu(A)_\ll=\{s\in \Cu(A) \mid s\ll \infty_A\}$.

\section{Uniformly Based Cuntz semigroups and approximate intertwinings}

\subsection{Uniformly Based Cuntz semigroups} We introduce the notion of \emph{uniform basis} for a $\Cu$-semigroup. We recall that the topology for $\Cu$-semigroups is order-theoretic, in the sense that it is constructed on the way-below relation. For an element $s$ of a $\Cu$-semigroup $S$, we denote $s_\ll:=\{s'\in S \mid s'\ll s\}$.  Observe that axiom (O2) allows us to approach $s$ from \textquoteleft cut-downs\textquoteright, that is, from elements in $s_\ll$. We mean to construct a setting in which we are able to control these approximations in a uniform way. 

A \emph{uniform basis} of a $\Cu$-semigroup consists of a countable family $(M_n,\epsilon_n)_n$, where $M_n$ is a $\CatPoM$ and $\epsilon_n: S_{\ll}\longrightarrow M_n$ is a weaker version of a $\CatPoM$-morphism that we precise next. The main idea is to be able to uniformly measure how close a \textquoteleft cut-down\textquoteright\ $\epsilon_n(s)$ is from its original element $s$. From this, we will be able in the sequel, to measure how close two $\Cu$-morphisms are from one another and develop a theory of approximate intertwinings for the category $\Cu$.

\begin{dfn}
Let $M,N$ be monoids in $\CatPoM$ and let $\epsilon:M\longrightarrow N$ be a map. We say that $\epsilon$ is a \emph{super-additive} morphism if, for any $g,h\in M$ we have $\epsilon(g)+\epsilon(h)\leq \epsilon(g+h)$.
\end{dfn}

\begin{dfn}
\label{dfn:unifbasedCu} 
We say that a $\Cu$-semigroup $S$ is \emph{uniformly based} if, there exists a countable family $(M_n,\epsilon_n)_{n\in\N}$ where $\left\{\begin{array}{ll} M_n \text{ is a $\CatPoM$ contained in } S_\ll\\ \epsilon_n: S_{\ll}\longrightarrow M_n\ \text{ is an order-preserving super-additive morphism}\end{array}\right.$\, satisfying the following axioms:

(U1): $(M_n)_n$ is a $\subseteq$-increasing sequence of subsets of $S_{\ll}$. (With $M_{-2},M_{-1},M_0:=\{0_S\}$ as convention.)

(U2): For any $n\in\N$, the restriction ${\epsilon_n}_{\mid{\underset{l<n-1}{\bigcup}M_l}}:{\underset{l<n-1}{\bigcup}M_l}\longrightarrow M_n$ is a $\CatPoM$-morphism. 

(U3): For any $s\in S_{\ll}$, $(\epsilon_n(s))_{n\in\N}$ is an increasing sequence in $s_{\ll}$ whose supremum is $s$. 

(U4): The sequence $(\epsilon_n(s))_{n>l}$ is $\ll$-increasing whenever $s\in M_l$.

We refer to $(M_n,\epsilon_n)_n$ as a \emph{uniform basis of $S$}. 
\end{dfn}

\begin{prop}
Let $S$ be a uniformly based $\Cu$-semigroup with uniform basis $(M_n,\epsilon_n)_n$. 
Then $\bigcup\limits_{l}M_l$ is dense in $S$. In particular, $S$ is countably-based whenever $M_n$ is countable for any $n\in\N$. 
\end{prop}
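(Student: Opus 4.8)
The plan is to verify directly the density characterization recalled in the preliminaries, namely that every element of $S$ is the supremum of a $\ll$-increasing sequence drawn from $\bigcup_l M_l$ (equivalently, that $\overline{\bigcup_l M_l}^{\ll}=S$). Fix $s\in S$. By (O2) choose a $\ll$-increasing sequence $(u_n)_n$ with $\sup_n u_n=s$; since $u_n\ll u_{n+1}$, every $u_n$ lies in $S_\ll$, so that $\epsilon_m(u_n)$ is defined for all $m$. (Alternatively one invokes the density of $S_\ll$ stated in the preliminaries.)

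The key step is an interleaving selection of basis elements. For each $n$, apply (U3) to $u_{n+1}\in S_\ll$: the sequence $(\epsilon_m(u_{n+1}))_m$ is increasing with supremum $u_{n+1}$, and each term lies in $(u_{n+1})_\ll$. Since $u_n\ll u_{n+1}=\sup_m\epsilon_m(u_{n+1})$, there is an index $m_n$ with $\epsilon_{m_n}(u_{n+1})\geq u_n$; set $b_n:=\epsilon_{m_n}(u_{n+1})\in M_{m_n}\subseteq\bigcup_l M_l$. The point of indexing the cut-down at $u_{n+1}$ rather than at $u_n$ is that it squeezes $b_n$ strictly between consecutive terms: by construction and by (U3) one has $u_n\leq b_n\ll u_{n+1}$.

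From these two-sided bounds the remaining checks are immediate and need no further axiom. First, $b_n\ll u_{n+1}\leq b_{n+1}$ yields $b_n\ll b_{n+1}$, so $(b_n)_n$ is $\ll$-increasing. Second, the sandwich $u_n\leq b_n\leq u_{n+1}$ forces $\sup_n b_n=\sup_n u_n=s$. Hence $s$ is the supremum of the $\ll$-increasing sequence $(b_n)_n$ from $\bigcup_l M_l$, giving $\overline{\bigcup_l M_l}^{\ll}=S$ and therefore density. The final assertion is then a formality: if each $M_n$ is countable, then $\bigcup_l M_l$ is a countable union of countable sets, hence a countable dense subset, so $S$ is countably-based.

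I expect the only genuine subtlety to be precisely this interleaving choice of indices. One must resist applying (U3) to $s$ itself, which need not lie in $S_\ll$ and for which $\epsilon_m(s)$ may be undefined; instead one approximates $s$ first by the $S_\ll$-sequence $(u_n)_n$ and only then applies the super-additive maps $\epsilon_m$ to those approximants, selecting $b_n$ at the level $u_{n+1}$ so as to obtain simultaneously the way-below increments and the squeeze to $s$. It is worth noting that axioms (U1), (U2) and (U4) play no role here: (U1) and (U2) govern the monoid structure of the $M_n$ and $\epsilon_n$, and (U4) the rapidity of convergence, none of which enters a pure density statement, which rests solely on (U3).
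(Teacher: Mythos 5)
Your proof is correct, and it takes a genuinely cleaner route than the paper's. The paper first treats $s\in S_\ll$ by forming the double composites $\epsilon_{n+1}(\epsilon_n(s))$, using (U4) (applied to $\epsilon_n(s)\in M_n$) together with monotonicity to see that this sequence is $\ll$-increasing, then invoking a diagonal-type argument to identify its supremum as $s$, and finally a second diagonal argument to pass from $S_\ll$ to all of $S$. Your interleaving argument handles an arbitrary $s\in S$ in one pass: approximating $s$ by a $\ll$-increasing sequence $(u_n)_n$ from (O2) and selecting $b_n=\epsilon_{m_n}(u_{n+1})$ with $u_n\leq b_n\ll u_{n+1}$ replaces both diagonal arguments by an explicit sandwich, and the verification that $(b_n)_n$ is $\ll$-increasing with supremum $s$ is then immediate from (O1). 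Your observation that only (U3) among the axioms (U1)--(U4) is needed is accurate and is a genuine (if minor) sharpening of the paper's argument, which does use (U4). The one point worth flagging is cosmetic: the paper's statement of density, $\overline{\bigcup_l M_l}^{\ll}=S$, is exactly what you establish, so nothing further is required for the countably-based conclusion.
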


\begin{proof}
 Let $s\in S_\ll$ and let $n\in\N$. Then $\epsilon_{n+1}(\epsilon_n(s))\ll \epsilon_{n+2}(\epsilon_n(s))\leq\epsilon_{n+2}(\epsilon_{n+1}(s))$. We deduce that $(\epsilon_{n+1}(\epsilon_n(s)))_n$ is $\ll$-increasing, and by a \textquoteleft diagonal-type\textquoteright\ argument as in the proof of \cite[Theorem 2.60]{T19}, it can be shown that its supremum is $s$. Again by a diagonal argument, we know that for any $s\in S$, there exists a $\ll$-increasing sequence in $\bigcup\limits_{l}M_l$ whose supremum is s.
\end{proof}

Let us now give a couple of examples of uniformly based $\Cu$-semigroups.

$\bullet$ Let $\overline{\N}:=\N\sqcup\{\infty\}$. The countable family $(\N,{\id_\N})_n$ defines uniform basis of $\overline{\N}$.

$\bullet$ Let $S:=\Cu(M_{2^\infty})$ be the Cuntz semigroup of the CAR-algebra $M_{2^\infty}:=\lim\limits_{\longrightarrow n}(\underset{1}{\overset{n}{\otimes}} M_{2},\id\otimes 1)$. Recall that  $S\simeq \N[\frac{1}{2}]\,\sqcup\,]0,\infty]$, where the mixed sum and mixed order are defined as follows: for any $x_c:=k/2^l \in \N[\frac{1}{2}]$, any $x_s:=k/2^l\in ]0,\infty]$ and any $\epsilon>0$, we have that $x_s\leq x_c\ll x_c \leq x_s+\epsilon$. Moreover, $x_c+x_s=2x_s$. (For more details, see e.g. \cite[Example 4.3.3]{T19}.) 

Now construct for any $n\in\N$ 
\[
\begin{array}{ll} 
\left\{
\begin{array}{ll} 
M_n:=\{\frac{k}{2^l}\in \N[\frac{1}{2}]\mid k\in\N, l\leq n\}.\\
\epsilon_n: \N[\frac{1}{2}]\sqcup\R_{++}\longrightarrow M_n
 \end{array}
 \right.\\
  \hspace{2,68cm}s\longmapsto\max\limits_{x\in M_n}\{ x\ll s\}
   \end{array}
\]
Let $n\in\N$. It is immediate to check that $M_n$ is a well-defined $\CatPoM$ contained in $\N[\frac{1}{2}]\sqcup\R_{++}$ and that $\epsilon_n$ is an order-preserving super-additive morphism. Furthermore, it is easily shown that the countable family $(M_n,\epsilon_n)_n$ satisfies (U1) and (U3) of \autoref{dfn:unifbasedCu}. Finally, for any $l<n-1$ and any $x_c\in M_l$, we have that $\epsilon_n(x_c)=x_c$, since $x_c$ is a compact element that belongs to $M_n$. Thus (U4) is satisfied and the restriction ${\epsilon_n}_{|{\underset{l<n-1}{\bigcup}M_l}}$ is in fact the canonical injection $\underset{l<n-1}{\bigcup}M_l\lhook\joinrel\longrightarrow M_n$, which is a $\CatPoM$-morphism. Thus $S$ admits a uniform basis.\\

Let us extend the latter examples and show that the Cuntz semigroups of separable $\AF$-algebras have a uniform basis. These objects have been studied and characterized in \cite[\S 5.5]{APT14} and we now recall some definitions and properties. 

A $\Cu$-semigroup $S$ is called \emph{simplicial} if $S\simeq \overline{\N}^r$ for some $r\in\N$. Therefore, simplicial $\Cu$-semigroups characterize Cuntz semigroups of finite dimensional $\CatCa$-algebras. As a result, inductive sequences of simplicial Cuntz semigroups characterize Cuntz semigroups of separable $\AF$-algebras. 

\begin{prop}
\label{prop:suma}
Let $S,T$ be uniformly based $\Cu$-semigroups. Then $S\oplus T$ is uniformly based.
\end{prop}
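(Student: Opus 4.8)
The plan is to equip $S\oplus T$ with the coordinatewise uniform basis built from the given data. Write $(M_n,\epsilon_n)_n$ for the uniform basis of $S$ and $(N_n,\delta_n)_n$ for that of $T$, and set $P_n:=M_n\oplus N_n$ together with $\zeta_n:=\epsilon_n\oplus\delta_n$, i.e.\ $\zeta_n(s,t):=(\epsilon_n(s),\delta_n(t))$. I claim $(P_n,\zeta_n)_n$ is a uniform basis of $S\oplus T$, and the whole argument consists in transferring the axioms for $S$ and $T$ through each coordinate.

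The key preliminary observation, on which everything rests, is that the order structure of $S\oplus T$ is coordinatewise: order and addition are coordinatewise by definition, the way-below relation satisfies $(s',t')\ll(s,t)$ if and only if $s'\ll s$ and $t'\ll t$, and suprema of increasing sequences are computed componentwise. In particular $(S\oplus T)_\ll=S_\ll\oplus T_\ll$, so $\zeta_n$ is a well-defined map $(S\oplus T)_\ll\longrightarrow P_n$, and each $P_n$ is a $\CatPoM$ contained in $(S\oplus T)_\ll$ since $M_n\subseteq S_\ll$ and $N_n\subseteq T_\ll$. Order-preservation and super-additivity of $\zeta_n$ then follow immediately from the corresponding properties of $\epsilon_n$ and $\delta_n$ applied in each coordinate.

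With this in hand, verifying (U1)--(U4) reduces to applying the axioms for $S$ and $T$ componentwise. (U1) holds because $(M_n)_n$ and $(N_n)_n$ are $\subseteq$-increasing and $P_0=\{0_S\}\oplus\{0_T\}=\{0_{S\oplus T}\}$. For (U3), the sequence $(\zeta_n(s,t))_n=\bigl((\epsilon_n(s),\delta_n(t))\bigr)_n$ is increasing, each term lies in $(s,t)_\ll$ because $\epsilon_n(s)\ll s$ and $\delta_n(t)\ll t$, and its supremum is $(\sup_n\epsilon_n(s),\sup_n\delta_n(t))=(s,t)$ by the coordinatewise computation of suprema. For (U4), if $(s,t)\in P_l$ then $s\in M_l$ and $t\in N_l$, so $(\epsilon_n(s))_{n>l}$ and $(\delta_n(t))_{n>l}$ are both $\ll$-increasing, whence $(\zeta_n(s,t))_{n>l}$ is $\ll$-increasing since $\ll$ is coordinatewise.

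The one point requiring a little care is (U2), where the monotonicity of the families is genuinely used. Since $(M_l)_l$ and $(N_l)_l$ are increasing, one checks that $\bigcup_{l<n-1}P_l=\bigl(\bigcup_{l<n-1}M_l\bigr)\oplus\bigl(\bigcup_{l<n-1}N_l\bigr)$: any pair with first coordinate in $M_{l_1}$ and second in $N_{l_2}$ already lies in $P_{\max(l_1,l_2)}$. Thus for $(a_1,b_1),(a_2,b_2)$ in this union, both first coordinates lie in $\bigcup_{l<n-1}M_l$ and both second coordinates in $\bigcup_{l<n-1}N_l$, on which $\epsilon_n$, resp.\ $\delta_n$, is additive by (U2); additivity of $\zeta_n$ on $\bigcup_{l<n-1}P_l$ then follows coordinatewise, and preservation of $0$ and of the order is clear. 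I do not anticipate any serious obstacle: the argument is a routine transfer of the axioms through the coordinatewise description of $S\oplus T$, the only genuine content being the componentwise behaviour of $\ll$ and of suprema in the direct sum, which I would record as the load-bearing lemma before turning to the verification.
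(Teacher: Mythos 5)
Your proof is correct and follows exactly the route the paper takes: the paper's entire proof is the one-line assertion that the ``concatenation'' of the two uniform bases, i.e.\ the coordinatewise family $(M_n\oplus N_n,\epsilon_n\oplus\delta_n)_n$, is a uniform basis of $S\oplus T$. You have simply supplied the routine verification of (U1)--(U4), correctly isolating the componentwise behaviour of $\ll$ and of suprema in the direct sum as the point that makes it all work.
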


\begin{proof}
The family obtained from the concatenation of a uniform basis of $S$ with a uniform basis of $T$ is a uniform basis of $S\oplus T$.
\end{proof}

\begin{cor}
 Any simplicial $\Cu$-semigroup is uniformly based.
\end{cor}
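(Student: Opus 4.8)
The plan is to reduce everything to the two facts already in hand: that $\overline{\N}$ is uniformly based (as recorded in the first example following \autoref{dfn:unifbasedCu}) and that the direct sum of two uniformly based $\Cu$-semigroups is again uniformly based (\autoref{prop:suma}). Since a simplicial $\Cu$-semigroup is by definition isomorphic to $\overline{\N}^r$ for some $r\in\N$, and $\overline{\N}^r\simeq\overline{\N}\oplus\cdots\oplus\overline{\N}$ ($r$ copies), a straightforward induction on $r$ should suffice.

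More precisely, I would first dispose of the base cases: for $r=0$ the semigroup $\overline{\N}^0=\{0\}$ is trivially uniformly based, and $r=1$ is exactly the example $(\N,\id_\N)_n$ for $\overline{\N}$. For the inductive step, assuming $\overline{\N}^r$ is uniformly based and recalling that $\overline{\N}$ is uniformly based, I would apply \autoref{prop:suma} to $\overline{\N}^r\oplus\overline{\N}\simeq\overline{\N}^{r+1}$ to conclude. This establishes the result for the model semigroups $\overline{\N}^r$ themselves.

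The one point requiring a small verification---and essentially the only place where anything could go wrong---is that being uniformly based is invariant under $\Cu$-isomorphism, so that the uniform basis constructed on $\overline{\N}^r$ transports to an arbitrary simplicial $S\simeq\overline{\N}^r$. This is not entirely formal, because the data $(M_n,\epsilon_n)_n$ lives inside $S_\ll$ rather than being extrinsic to $S$. I would check it by observing that any $\Cu$-isomorphism $\varphi\colon S\to T$ restricts to an order- and $\ll$-isomorphism of $S_\ll$ onto $T_\ll$; hence, given a uniform basis $(M_n,\epsilon_n)_n$ of $S$, the family $(\varphi(M_n),\,\varphi\circ\epsilon_n\circ\varphi^{-1})_n$ is a uniform basis of $T$. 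Indeed $\varphi(M_n)$ is a $\CatPoM$ contained in $T_\ll$, the transported map is order-preserving and super-additive as a composite of such with isomorphisms, and each of the axioms (U1)--(U4) is phrased purely in terms of the order, the relation $\ll$, suprema of increasing sequences, and the monoid operation---all of which $\varphi$ preserves and reflects. With this invariance in place, the corollary follows at once from the inductive argument above.
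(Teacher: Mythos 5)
Your argument is correct and is exactly the route the paper intends: the base case is the example $(\N,\id_\N)_n$ for $\overline{\N}$, and the inductive step is \autoref{prop:suma} applied $r$ times, the paper leaving this (and the transport of a uniform basis along a $\Cu$-isomorphism, which you rightly note is needed since the data $(M_n,\epsilon_n)_n$ lives inside $S_\ll$) implicit. Your explicit check that $(\varphi(M_n),\varphi\circ\epsilon_n\circ\varphi^{-1})_n$ is a uniform basis of $T$ for a $\Cu$-isomorphism $\varphi\colon S\to T$ is a worthwhile addition rather than a deviation.
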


 We now recall a characterization of inductive limits (of sequences) in $\Cu$, that will be useful in proving that the Cuntz semigroup of any (separable) $\AF$ algebra is uniformly based and in the further course of the manuscript.

\begin{prop}\emph{(See e.g. \cite[Section 2.1]{R12} - \cite[Theorem 2]{CEI08})}
\label{prop:caralimicu}
Consider an inductive sequence $(S_i,\sigma_{ij})_{i\in \N}$ in $\Cu$. Then $(S,\sigma_{i\infty})_{i\in \N}$ is the inductive limit of the inductive sequence if and only if it satisfies the two following properties: 

\emph{(L1)}: For any $s\in S$, there exists $(s_i)_{i\in \N}$ such that $s_i\in S_i$ and $\sigma_{i(i+1)}(s_i)\ll s_{i+1}$, for any $i\in \N$, and such that $s=\sup\limits_{i\in \N}\sigma_{i\infty}(s_i)$. 

\emph{(L2)}: Let $s,t$ be elements in $S_i$ such that $\sigma_{i\infty}(s)\leq\sigma_{i\infty}(t)$. For any $s'\ll s$, there exists $j\geq i$ such that $\sigma_{ij}(s')\ll\sigma_{ij}(t)$.
\end{prop}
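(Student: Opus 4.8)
The plan is to prove the two implications separately, placing essentially all of the work in the direction (L1)\,and\,(L2) $\Rightarrow$ universal property, and dispatching the converse by uniqueness of inductive limits. Throughout I would use freely that a $\Cu$-morphism preserves $\ll$ and suprema of increasing sequences, that $a\ll b\leq c$ implies $a\ll c$, and the cocone identities $\sigma_{i\infty}=\sigma_{j\infty}\circ\sigma_{ij}$ and, for a competing cocone $(T,\tau_{i\infty})$, $\tau_{i\infty}=\tau_{j\infty}\circ\sigma_{ij}$ for $i\leq j$.

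For the implication $\Leftarrow$, fix any cocone $(T,\tau_{i\infty})$ in $\Cu$ over $(S_i,\sigma_{ij})$; I must produce a unique $\Cu$-morphism $\theta\colon S\to T$ with $\theta\circ\sigma_{i\infty}=\tau_{i\infty}$. Given $s\in S$, I use (L1) to write $s=\sup_i\sigma_{i\infty}(s_i)$ with $\sigma_{i(i+1)}(s_i)\ll s_{i+1}$, and I set $\theta(s):=\sup_i\tau_{i\infty}(s_i)$; this supremum exists by (O1), since applying the order-preserving $\tau_{(i+1)\infty}$ to $\sigma_{i(i+1)}(s_i)\leq s_{i+1}$ shows $(\tau_{i\infty}(s_i))_i$ is increasing. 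The crux of the whole argument, and the only place where (L2) enters, is the following comparison lemma: if $a\in S_i$, $b\in S_j$ and $\sigma_{i\infty}(a)\leq\sigma_{j\infty}(b)$, then $\tau_{i\infty}(a)\leq\tau_{j\infty}(b)$. To prove it I pass to $k:=\max(i,j)$, where $\sigma_{k\infty}(\sigma_{ik}(a))\leq\sigma_{k\infty}(\sigma_{jk}(b))$; for each $a'\ll\sigma_{ik}(a)$, axiom (L2) furnishes $l\geq k$ with $\sigma_{kl}(a')\ll\sigma_{jl}(b)$, and applying $\tau_{l\infty}$ gives $\tau_{k\infty}(a')\leq\tau_{j\infty}(b)$; taking the supremum over a rapidly increasing sequence converging to $\sigma_{ik}(a)$ (provided by (O2)) and using that $\tau_{k\infty}$ preserves such suprema yields $\tau_{i\infty}(a)=\tau_{k\infty}(\sigma_{ik}(a))\leq\tau_{j\infty}(b)$.

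With the comparison lemma in hand the remaining verifications are routine. Monotonicity and well-definedness of $\theta$ follow because, whenever $\sup_i\sigma_{i\infty}(s_i)\leq\sup_i\sigma_{i\infty}(t_i)$, each $\sigma_{i\infty}(s_i)\ll\sigma_{(i+1)\infty}(s_{i+1})$ lies $\ll$ the right-hand supremum, hence $\leq\sigma_{m\infty}(t_m)$ for some $m$, so the lemma gives $\tau_{i\infty}(s_i)\leq\tau_{m\infty}(t_m)$ and thus $\theta(s)\leq\theta(t)$; applying this in both directions when the two suprema coincide shows $\theta$ is independent of the chosen (L1)-sequence. Additivity uses (O3)--(O4) to see that $(s_i+t_i)_i$ is an (L1)-sequence for $s+t$, and $\theta(0)=0$ is clear. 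The identity $\theta\circ\sigma_{i\infty}=\tau_{i\infty}$ comes from exhibiting, for $a=\sup_n a_n$ with $a_n\ll a_{n+1}$ in $S_i$, the explicit (L1)-sequence $s_j:=\sigma_{ij}(a_j)$ for $\sigma_{i\infty}(a)$, whence $\theta(\sigma_{i\infty}(a))=\sup_j\tau_{i\infty}(a_j)=\tau_{i\infty}(a)$. Preservation of $\ll$ follows by catching $x\ll y$ at a finite stage of an (L1)-sequence for $y$ and using that $\tau_{(i+1)\infty}$ preserves $\ll$; preservation of suprema of increasing sequences $(x^{(p)})_p$ follows by expanding $\theta(x)=\sup_i\tau_{i\infty}(s_i)$ for an (L1)-sequence of $x=\sup_p x^{(p)}$, catching each $\sigma_{i\infty}(s_i)\ll x$ below some $x^{(p)}$, and combining order-preservation with $\theta\circ\sigma_{i\infty}=\tau_{i\infty}$. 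Uniqueness is immediate: any competing $\theta'$ preserves suprema and agrees on each $\sigma_{i\infty}(S_i)$, hence on every $s=\sup_i\sigma_{i\infty}(s_i)$.

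For the converse $\Rightarrow$, I would not argue directly from the universal property but transfer the properties from a concrete model. The existence of inductive limits in $\Cu$, together with the explicit construction (see \cite{CEI08} and \cite{R12}), provides a cocone $(\hat S,\hat\sigma_{i\infty})$ over the same sequence satisfying (L1) and (L2); by the implication already proved it is an inductive limit. If $(S,\sigma_{i\infty})$ is also an inductive limit, the uniqueness of limits yields a $\Cu$-isomorphism $\Phi\colon\hat S\to S$ with $\Phi\circ\hat\sigma_{i\infty}=\sigma_{i\infty}$. Since (L1) and (L2) are phrased entirely through the common connecting maps $\sigma_{ij}$ and through the order and way-below relation of the limit, all of which $\Phi$ preserves and reflects, they pass from $\hat S$ to $S$. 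The main obstacle in the whole argument is the comparison lemma of the second paragraph: it is the only point where (L2) is invoked and the only place where one must convert an inequality between images in the limit $S$ back into a finitely approximated inequality in some $S_l$; everything else is bookkeeping with (O1)--(O4) and the definition of a cocone.
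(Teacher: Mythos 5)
Your proof is correct, but there is nothing in the paper to compare it against line by line: the paper states this proposition without proof, quoting it from \cite[Section 2.1]{R12} and \cite[Theorem 2]{CEI08}. Measured against those sources, your argument follows the standard route and fills in the details competently. The heart of your $\Leftarrow$ direction --- the comparison lemma converting $\sigma_{i\infty}(a)\leq\sigma_{j\infty}(b)$ into $\tau_{i\infty}(a)\leq\tau_{j\infty}(b)$ by pushing to a common stage $k$, applying (L2) to each $a'\ll\sigma_{ik}(a)$, and then exhausting $\sigma_{ik}(a)$ by an (O2)-sequence --- is exactly the engine of the classical proof, and it is indeed the only place (L2) is needed; the remaining verifications (well-definedness and monotonicity of $\theta$ via two-sided comparison, additivity via (O3)--(O4) applied to $(s_i+t_i)_i$, the factorization $\theta\circ\sigma_{i\infty}=\tau_{i\infty}$, preservation of $\ll$ and of suprema, uniqueness) are all sound as sketched. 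Two cosmetic points: the proposition implicitly assumes $(S,\sigma_{i\infty})$ is a cocone of $\Cu$-morphisms, which you correctly use but should state; and your (L1)-sequence $s_j:=\sigma_{ij}(a_j)$ for $\sigma_{i\infty}(a)$ is only defined for $j\geq i$, so one should pad with zeros (harmless, since $0\ll 0$). For the $\Rightarrow$ direction, your transfer argument is legitimate and is essentially how the literature disposes of it: one needs the existence of \emph{some} cocone over $(S_i,\sigma_{ij})$ satisfying (L1)--(L2), which is precisely the content of the construction in \cite{CEI08}, and then uniqueness of limits gives a $\Cu$-isomorphism $\Phi$ along which both properties transport --- correctly observing that the conclusion of (L2) lives entirely in $S_j$, while all references to the limit are through order and suprema, which $\Phi$ preserves and reflects. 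The only external dependency of your proof (existence of the concrete model) is the same one the paper itself assumes by citing these references, so nothing is circular.
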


\begin{thm}
Let $(S_i,\sigma_{ij})_{i\in \N}$ be an inductive sequence of simplicial $\Cu$-semigroups and let $(S,\sigma_{i\infty})_{i\in \N}$ be its inductive limit. Then $S$ is uniformly based.
\end{thm}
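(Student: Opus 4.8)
The plan is to realize $S$ via the inductive-limit characterization of \autoref{prop:caralimicu} and to build the uniform basis out of the images of the compact elements of the simplicial stages. Since each $S_i\simeq\overline{\N}^{r_i}$ is simplicial, $(S_i)_\ll$ is exactly its set of compact elements, which I identify with $\N^{r_i}$. I would set $M_n:=\sigma_{n\infty}((S_n)_\ll)$ for $n\geq 1$ (with $M_{-2},M_{-1},M_0:=\{0_S\}$ as in the convention). Because $\Cu$-morphisms preserve the compact-containment relation, any $c\ll c$ yields $\sigma_{n\infty}(c)\ll\sigma_{n\infty}(c)$, so every element of $M_n$ is compact in $S$ and hence lies in $S_\ll$; moreover $M_n$ is a sub-$\CatPoM$ of $S_\ll$, being the image of a monoid under a monoid morphism. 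Writing $\sigma_{n\infty}=\sigma_{(n+1)\infty}\circ\sigma_{n(n+1)}$ and using that $\sigma_{n(n+1)}$ carries compact elements to compact elements gives $M_n\subseteq M_{n+1}$, which is exactly (U1). Each $\N^{r_n}$ is countable, so each $M_n$ is countable, and the final ``in particular'' clause will then follow from the earlier proposition (a uniform basis with countable $M_n$ produces a countably-based semigroup) once the axioms are checked.

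The cleanest step is to show that $B:=\bigcup_n M_n$ is a $\ll$-dense countable basis of $S$; this supplies the supremum demanded in (U3). Given $s\in S$, property (L1) yields $s_i\in S_i$ with $\sigma_{i(i+1)}(s_i)\ll s_{i+1}$ and $s=\sup_i\sigma_{i\infty}(s_i)$. The key observation is that in a simplicial semigroup $\overline{\N}^{r}$ one has $a\ll b$ if and only if $a$ is a finite (compact) vector with $a\leq b$; hence each $\sigma_{i(i+1)}(s_i)$ is compact in $S_{i+1}$, so $\sigma_{i\infty}(s_i)=\sigma_{(i+1)\infty}(\sigma_{i(i+1)}(s_i))$ lies in $M_{i+1}$ and is compact in $S$. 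The resulting sequence is therefore $\ll$-increasing in $B$ with supremum $s$, so $B$ is $\ll$-dense; in particular, for every $s\in S_\ll$ the set $\{m\in B:m\leq s\}$ has supremum $s$.

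It remains to define $\epsilon_n\colon S_\ll\to M_n$ and to check that it is order-preserving and super-additive and satisfies (U2)--(U4). The guiding idea is that $\epsilon_n(s)$ should be a \emph{best level-$n$ approximation of $s$ from below}: I want $\epsilon_n(s)\in M_n$ with $\epsilon_n(s)\leq s$, the sequence $(\epsilon_n(s))_n$ increasing with supremum $s$ (giving (U3) from the density above), and $\epsilon_n$ restricting to the identity on each $M_l$ with $l<n-1$ --- the latter makes the restriction in (U2) the canonical inclusion and, since elements of $M_l$ are compact, makes the eventually constant sequences in (U4) $\ll$-increasing. The naive choice $\epsilon_n(s):=\max\{m\in M_n:m\leq s\}$ would make order-preservation and super-additivity immediate (for $g,h\in S_\ll$ one has $\epsilon_n(g)+\epsilon_n(h)\in M_n$ and $\leq g+h$, whence $\leq\epsilon_n(g+h)$), and it is exactly what works in the totally ordered $r=1$ case, as in the CAR-algebra example above.

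I expect the \textbf{main obstacle} to be precisely that this maximum need not exist once $r>1$: the limit $S$ is not a lattice in a way compatible with $M_n$ (one can have $g_1,g_2\in M_n$ with $g_1,g_2\leq s$ but $g_1+g_2\not\leq s$, so $\{m\in M_n:m\leq s\}$ has incomparable maximal elements), and $S_\ll$ contains non-compact elements of ``infinite multiplicity'' for which no maximal level-$n$ approximation exists at all. The resolution I propose is to replace the naive maximum by a single \emph{coherent} super-additive selection: using that each finite stage $(S_i)_\ll=\N^{r_i}$ is a lattice, I would choose, compatibly across $n$ and transported along the connecting maps, a level-$n$ approximation of $s$, and then use (L2) to certify that these finite-stage choices pass correctly to $S$ --- that the selected elements are genuinely $\leq s$, assemble into a $\ll$-increasing sequence, and have supremum $s$. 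The point is that super-additivity, rather than additivity, is exactly the relaxation that lets a non-maximal but monotone and coherent choice absorb the discrepancy between join and sum. Verifying simultaneously that one such family is well defined, order-preserving, and super-additive is the technical heart of the argument; the remaining axioms (U2)--(U4) then follow formally from the density of $B$ and the compactness of the elements of the $M_l$.
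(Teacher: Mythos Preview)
Your construction of $M_n:=\sigma_{n\infty}((S_n)_\ll)$, the verification of (U1), and the density of $\bigcup_n M_n$ via (L1) all match the paper exactly. The divergence comes at the definition of $\epsilon_n$, where you anticipate an obstacle that the paper simply does not treat as one.

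The paper's route is far shorter: it asserts that $(S_n)_\ll\simeq\N^{r_n}$ is a complete lattice, that $M_n$ --- as its image under the order-preserving map $\sigma_{n\infty}$ --- is therefore also a complete lattice, and sets $\epsilon_n(s):=\sup\{x\in M_n:x\ll s\}$ directly. With this in hand, order-preservation and super-additivity are automatic (precisely your ``naive choice'' reasoning), (U2) and (U4) follow because every element of $M_l$ is compact and hence fixed by $\epsilon_n$ for $n>l$, and (U3) is declared almost immediate from (L1). Your ``coherent super-additive selection transported along the connecting maps and certified by (L2)'' is therefore unnecessary once one accepts the lattice claim; the paper never leaves the max framework.

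Your scepticism is not baseless --- neither the assertion that $\N^{r}$ is a \emph{complete} lattice nor the general principle that order-preserving images of lattices are lattices is literally true --- so the paper's one-line justification is terse to the point of being loose. But your proposed workaround is itself only an outline (``choose \ldots\ compatibly across $n$ \ldots\ and then use (L2)''), and it is strictly more complicated than what the paper does. If one wants to be rigorous here, the efficient fix is to argue directly that $\{x\in M_n:x\ll s\}$ has a maximum for each $s\in S_\ll$ (using compactness of the elements of $M_n$ and the finite-stage lattice structure pushed forward via (L2)), rather than abandoning the max approach altogether.
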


\begin{proof}
We know that each $S_i$ is isomorphic to $\overline{\N}^{r_i}$ for some $r_i\in\N$ and hence $(S_i)_{\ll}\simeq \N^{r_i}$ is a complete lattice for any $i\in\N$. Let $n\in\N$ and define $M_n:=\sigma_{n\infty}((S_n)_\ll)$. Since $\sigma_{n\infty}$ preserves the order, we have that $M_n$ is also a complete lattice. Therefore, we can define $\epsilon_n:S_\ll\longrightarrow M_n$ that maps $s\longmapsto\sup\limits_{x\in M_n}\{x\ll s\}$. Mimicking the UHF case, it is routine to check that $(M_n,\epsilon_n)_{n\in\N}$ satisfies (U1)-(U2)-(U4) and (U3) is almost immediate from (L1) and left to the reader.
\end{proof}

\begin{cor}
Let $A$ be a separable $\AF$-algebra. Then $\Cu(A)$ is uniformly based. 

In particular, let $M_q$ be the $\UHF$-algebra associated to the supernatural number $q:=\prod\limits_{n=0}^\infty p_n$. Then $\Cu(M_q)\simeq(\bigcup\limits_{n\in\N} \frac{1}{p_1\dots p_n}\N)\,\sqcup\,]0,\infty]$ is uniformly based with uniform basis \[
\begin{array}{ll} 
\left\{
\begin{array}{ll} 
M_n:=\frac{1}{p_1\dots p_n}\N.\\
\epsilon_n: (\bigcup\limits_{n\in\N} \frac{1}{p_1\dots p_n}\N)\sqcup\R_{++}\longrightarrow M_n
 \end{array}
 \right.\\
  \hspace{4,14cm}s\longmapsto\max\limits_{x\in M_n}\{ x\ll s\}
   \end{array}
\]
\end{cor}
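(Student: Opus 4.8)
The plan is to prove the corollary in two parts, both relying on the preceding theorem. For the first statement, the key structural fact is that a separable $\AF$-algebra $A$ is an inductive limit of finite-dimensional $\CatCa$-algebras, and since $\Cu$ is a continuous functor, $\Cu(A)$ is the inductive limit in $\Cu$ of the sequence $(\Cu(A_i),\Cu(\iota_{ij}))_i$, where $(A_i,\iota_{ij})$ is the defining sequence of $A$. Each $A_i$ is finite-dimensional, so $\Cu(A_i)$ is simplicial (isomorphic to $\overline{\N}^{r_i}$) by the characterization recalled just before the theorem. First I would invoke the theorem with $(S_i,\sigma_{ij}):=(\Cu(A_i),\Cu(\iota_{ij}))$ to conclude immediately that $\Cu(A)$ is uniformly based. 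The only point requiring a word of justification is the passage from the $\CatCa$-algebraic inductive limit to the $\Cu$-theoretic one, which is exactly the continuity of $\Cu$ together with the identification of $\Cu$ of a finite-dimensional algebra with a simplicial semigroup.

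For the second, more explicit statement about $M_q$, the plan is to verify that the given family $(M_n,\epsilon_n)_n$ is precisely the uniform basis produced by the proof of the theorem, specialized to the canonical sequence realizing $M_q$. Here $M_q=\lim_{\longrightarrow}(M_{p_1\cdots p_n},\text{unital embeddings})$, so that $\Cu(M_{p_1\cdots p_n})\simeq\overline{\N}^{1}=\overline{\N}$ at each finite stage and the connecting maps are multiplication by the successive ratios $p_{n+1}$. I would identify $(S_n)_\ll\simeq\N$ with $\frac{1}{p_1\cdots p_n}\N$ sitting inside the limit, so that $\sigma_{n\infty}((S_n)_\ll)=\frac{1}{p_1\cdots p_n}\N=M_n$, matching the displayed definition. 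The map $\epsilon_n(s)=\max_{x\in M_n}\{x\ll s\}$ is then exactly the map $s\mapsto\sup_{x\in M_n}\{x\ll s\}$ from the theorem, the supremum being attained (hence a max) because $M_n$ is a complete lattice and, by the mixed-order description recalled in the CAR-algebra example, the compact elements of $M_q$ dominate their soft counterparts so that the relevant set of cut-downs has a largest element.

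The main obstacle, and the only step I expect to require genuine care rather than citation, is confirming the concrete identification $\Cu(M_q)\simeq(\bigcup_n\frac{1}{p_1\cdots p_n}\N)\sqcup\,]0,\infty]$ together with its mixed order, and checking that $\epsilon_n$ defined via $\max$ coincides with the abstract $\sup$ and indeed lands in $M_n$. This is the direct analogue of the CAR-algebra ($q=2^\infty$) computation already carried out in detail in the second example following \autoref{dfn:unifbasedCu}; the argument there that $M_n$ is a $\CatPoM$, that $\epsilon_n$ is order-preserving and super-additive, and that (U1)--(U4) hold transfers verbatim with $2^l$ replaced by $p_1\cdots p_n$. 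In particular, for $x_c\in M_l$ with $l<n-1$ one again has $\epsilon_n(x_c)=x_c$ since $x_c$ is compact and lies in $M_n$, so the restricted map is the canonical inclusion, a $\CatPoM$-morphism, giving (U2), while (U3) and (U4) follow from the density of the dyadic-type rationals and the strict comparison between compact and soft elements. I would therefore present this part as ``mimicking the UHF/CAR case'' and refer the reader to the worked example, leaving the routine verifications to them.
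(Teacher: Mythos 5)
Your proposal is correct and follows the same route the paper intends: the first claim is an immediate application of the preceding theorem to the sequence $(\Cu(A_i),\Cu(\iota_{ij}))_i$ via continuity of the functor $\Cu$, and the explicit basis for $\Cu(M_q)$ is obtained by specializing the theorem's construction $M_n=\sigma_{n\infty}((S_n)_\ll)$, $\epsilon_n(s)=\sup_{x\in M_n}\{x\ll s\}$ and mimicking the CAR-algebra example with $2^l$ replaced by $p_1\cdots p_n$. The paper offers no further proof beyond this, so your write-up, including the remark that the supremum is attained because $M_n$ is a complete lattice, covers everything needed.
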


\subsection{Comparison of \texorpdfstring{$\Cu$}{Cu}-morphisms - Approximate Intertwinings}
In this part, we define a comparison between two abstract $\Cu$-morphisms $\alpha,\beta:S\longrightarrow T$ on a given set $\Lambda\subseteq S$. This allows us to get an analogous notion of one/two-sided approximate intertwinings between two inductive sequences of uniformly based $\Cu$-semigroups.

\begin{dfn}
Let $S,T\in \Cu$, let $\alpha,\beta:S\longrightarrow T$ be two $\Cu$-morphisms and let $\Lambda$ be a subset of $S$.

(i) We say that $\alpha$ and $\beta$ \emph{compare on $\Lambda$} and we write $\alpha\underset{\Lambda}{\simeq}\beta $ if, for any $g',g\in\Lambda$ such that $g'\ll g$ (in $S$), we have that $\alpha(g')\leq \beta(g)$ and $\beta(g')\leq \alpha(g)$.

(ii) We say that $\alpha$ and $\beta$ \emph{strictly compare on $\Lambda$} and we write $\alpha\underset{\Lambda}{\approx}\beta $ if, for any $g',g\in\Lambda$ such that $g'\ll g$ (in $S$), we have that $\alpha(g')\ll \beta(g)$ and $\beta(g')\ll \alpha(g)$.
\end{dfn}

It is immediate that strict comparison on a given set $\Lambda$ implies comparison (on $\Lambda$). Also, two $\Cu$-morphisms $\alpha,\beta:S\longrightarrow T$ compare on $S$ if and only if they strictly compare on $S$ if and only if $\alpha=\beta$. 
For countably-based $\Cu$-semigroups, we can weaken the assumption and the morphisms only have to compare on any (countable) basis of $S$ to be equal. However, to check whether two $\Cu$-morphisms compare on a countable set can still be a hard thing to do. In practice, we often have access to what will call later a \emph{finite} uniform basis, that will make things easier. (We will only have to compare morphisms on finite sets.)

\begin{dfn}
\label{dfn:Cuonesidedapprox}
Let $(S_i,\sigma_{ij})_{i\in\N}$ and $(T_i,\tau_{ij})_{i\in\N}$ be two inductive sequences of uniformly based $\Cu$-semigroups. Let $(S,\sigma_{i\infty})_i$ and $(T,\tau_{i\infty})_i$ be their respective inductive limits in $\Cu$. 
For any $i\in\N$, consider a uniform basis of $S_i$, respectively of $T_i$, that we both denote by $(M_n,\epsilon_n)_n$. (Without referring to the index $i$ to ease notations and the basis we refer to is always clear.)

 Suppose that we have the following diagram:
\[
\xymatrix{
\dots\ar[r]& S_{i}\ar[d]^{c_{i}}\ar[r]^{\sigma_{ii+1}} & S_{i+1}\ar[r]\ar[d]^{c_{i+1}} &\dots \\
\dots\ar[r] &T_{i}\ar[r]_{\tau_{ii+1}} & T_{i+1}\ar[r]&\dots
} 
\]
where $c_i$ is a $\Cu$-morphism for each $i\in\N$. Moreover, assume that there is a strictly increasing sequence of natural numbers $(n_i)_i$ such that:

(i) For any $i\leq j$, we have $\sigma_{ij}(M_{n_j})\subseteq M_{n_j}$, where $\sigma_{ij}:=\sigma_{j-1j}\circ ...\circ\sigma_{ii+1}$.

(ii) $c_{i+1}\circ\sigma_{ii+1}\underset{M_{n_i}}\approx\tau_{ii+1}\circ c_i$, for any $i\in\N$.\\
Then we say the diagram is a \emph{one-sided approximate intertwining}.
\end{dfn}

\begin{lma}
\label{lma:Cuonesidedapprox}
In the context of \autoref{dfn:Cuonesidedapprox}, assume that there is a one-sided approximate intertwining from $(S_i,\sigma_{ij})_{i\in\N}$ to $(T_i,\tau_{ij})_{i\in\N}$. Then for any $i\in\N$, there exists a $\CatPoM$-morphism as follows:
\[
\begin{array}{ll}
\gamma_i:\bigcup\limits_{l}M_l\subseteq (S_i)_\ll\longrightarrow T\\
\hspace{2,65cm}s\longmapsto\sup\limits_{j>i,s_l}(\tau_{j+1\infty}\circ c_{j+1}\circ\sigma_{ij+1}(\epsilon_{n_j}(s)))
\end{array}
\]
where $s_l:=\min\{l\in\N\mid s\in M_l\}$.
\end{lma}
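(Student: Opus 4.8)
The plan is to establish first that the supremum defining $\gamma_i(s)$ exists, and then to verify the three defining properties of a $\CatPoM$-morphism: that it sends $0$ to $0$, is order-preserving, and is additive. Throughout, for $s\in M_{s_l}$ write $a_j:=\epsilon_{n_j}(s)$ and abbreviate the $j$-th term of the defining family by $\gamma_i^{(j)}(s):=\tau_{j+1\infty}\circ c_{j+1}\circ\sigma_{ij+1}(a_j)$, where $j$ runs over the cofinal set $\{j\in\N:j>i,\ j>s_l\}$. Since $(n_j)_j$ is strictly increasing we have $n_j\geq j>s_l$ on this set, so (U4) applies to $s\in M_{s_l}$.

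The crux is to show that $(\gamma_i^{(j)}(s))_j$ is increasing, so that its supremum exists by (O1). From $s\in M_{s_l}$ and $n_j,n_{j+1}>s_l$, axiom (U4) gives $a_j\ll a_{j+1}$; since $a_j\in M_{n_j}\subseteq M_{n_{j+1}}$ and $a_{j+1}\in M_{n_{j+1}}$ by (U1), condition (i) places $\sigma_{ij+1}(a_j),\sigma_{ij+1}(a_{j+1})$ in $M_{n_{j+1}}$, and as $\sigma_{ij+1}$ preserves $\ll$ we get $\sigma_{ij+1}(a_j)\ll\sigma_{ij+1}(a_{j+1})$ there. Applying the strict comparison (ii) at stage $j+1$, namely $c_{j+2}\circ\sigma_{j+1j+2}\underset{M_{n_{j+1}}}{\approx}\tau_{j+1j+2}\circ c_{j+1}$, to this $\ll$-pair yields
\[
\tau_{j+1j+2}\bigl(c_{j+1}(\sigma_{ij+1}(a_j))\bigr)\ll c_{j+2}\bigl(\sigma_{j+1j+2}(\sigma_{ij+1}(a_{j+1}))\bigr)=c_{j+2}\bigl(\sigma_{ij+2}(a_{j+1})\bigr).
\]
Applying the order-preserving map $\tau_{j+2\infty}$ and using $\tau_{j+2\infty}\circ\tau_{j+1j+2}=\tau_{j+1\infty}$ converts this into $\gamma_i^{(j)}(s)\leq\gamma_i^{(j+1)}(s)$, which is exactly what is needed.

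Preservation of $0$ is immediate: $0\in M_0$ lies in $\bigcup_{l<n_j-1}M_l$ once $n_j\geq 2$, so (U2) forces $\epsilon_{n_j}(0)=0$, whence $\gamma_i^{(j)}(0)=0$ and $\gamma_i(0)=0$. For monotonicity, if $s\leq t$ then $\epsilon_{n_j}(s)\leq\epsilon_{n_j}(t)$ (each $\epsilon_{n_j}$ being order-preserving), and since $\sigma_{ij+1},c_{j+1},\tau_{j+1\infty}$ all preserve order we get $\gamma_i^{(j)}(s)\leq\gamma_i^{(j)}(t)$ for all large $j$; passing to suprema over a common cofinal tail gives $\gamma_i(s)\leq\gamma_i(t)$. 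For additivity, take $s,t\in\bigcup_l M_l$; by (U1) they lie in a common $M_k$, hence so does $s+t$ since each $M_k$ is a submonoid. The key subtlety is that $\epsilon_{n_j}$ is only super-additive in general, but by (U2) its restriction to $\bigcup_{l<n_j-1}M_l$ is a genuine $\CatPoM$-morphism, so for $j$ large enough that $k<n_j-1$ one has the \emph{exact} identity $\epsilon_{n_j}(s+t)=\epsilon_{n_j}(s)+\epsilon_{n_j}(t)$. As $\sigma_{ij+1},c_{j+1},\tau_{j+1\infty}$ are monoid morphisms this propagates to $\gamma_i^{(j)}(s+t)=\gamma_i^{(j)}(s)+\gamma_i^{(j)}(t)$, and taking suprema of these increasing sequences while invoking (O4) yields $\gamma_i(s+t)=\gamma_i(s)+\gamma_i(t)$.

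The only genuinely delicate point is the monotonicity computation, where the direction of the strict comparison in (ii), the invariance (i), and the $\ll$-increasing clause (U4) must be threaded together with the correct index shift (the $j$-th term is compared to the $(j+1)$-st through stage $j+1$ of the intertwining). The remainder is bookkeeping: each clause used ($\ll$-increasing, exact additivity, $\epsilon_{n_j}(0)=0$) holds only for $j$ beyond a threshold depending on $s_l$, $t_l$, $k$, so one must consistently restrict to a common cofinal tail and use that deleting finitely many initial terms of an increasing sequence leaves its supremum unchanged.
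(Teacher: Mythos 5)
Your proposal is correct and follows essentially the same route as the paper's proof: you establish that the defining sequence increases by threading (U4), the invariance condition (i), and the strict comparison (ii) at stage $j+1$ in exactly the way the paper does, and you verify additivity by passing to a common $M_k$ and invoking the exact additivity of $\epsilon_{n_j}$ from (U2) for large $j$. The only cosmetic difference is that you weaken the $\ll$-increasing conclusion to $\leq$-increasing after applying $\tau_{j+2\infty}$ (the paper keeps $\ll$, which it needs later), but either suffices here for the existence of the supremum via (O1).
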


\begin{proof}
Fix $i\in\N$ and let $s\in \bigcup\limits_{l}M_l\subseteq (S_i)_{\ll}$. For any $j>i$, we define
\vspace{-0,3cm}\[
\alpha_j:
\xymatrix{
S_{i}\ar[r]^{\sigma_{ij+1}} & S_{j+1}\ar[d]^{c_{j+1}} && \\
&T_{j+1}\ar[rr]_{\tau_{j+1\infty}} &&T
} 
\]
We claim that $(\alpha_j(\epsilon_{n_j}(s)))_{j>i,s_l}$ is a $\ll$-increasing sequence in $T$. Let $j>i,s_l$. Since $(n_i)_i$ is strictly increasing, we know by (U4) that $(\epsilon_{n_j}(s))_{j>s_l}$ is $\ll$-increasing in $S_i$ towards $s$. We deduce that $\sigma_{ij+1}(\epsilon_{n_j}(s))\ll \sigma_{ij+1}(\epsilon_{n_{j+1}})$ in $S_{j+1}$. 
Moreover, using (i) of \autoref{dfn:Cuonesidedapprox}, we know that $\sigma_{ij+1}(\epsilon_{n_j}(s)),\sigma_{ij+1}(\epsilon_{n_{j+1}}(s))\in M_{n_{j+1}}$. Now using (ii) of \autoref{dfn:Cuonesidedapprox}, we get that $\tau_{j+1j+2}\circ c_{j+1}\circ\sigma_{ij+1}(\epsilon_{n_j}(s))\ll c_{j+2}\circ\sigma_{j+1j+2}\circ\sigma_{ij+1}(\epsilon_{n_{j+1}}(s))$ and hence 
\[\tau_{j+1\infty}\circ c_{j+1}\circ\sigma_{ij+1}(\epsilon_{n_j}(s))\ll\tau_{j+2\infty}\circ c_{j+2}\circ\sigma_{ij+2}(\epsilon_{n_{j+1}}(s)).\] That is, $\alpha_j(\epsilon_{n_j}(s))\ll \alpha_{j+1}(\epsilon_{n_{j+1}}(s))$. Thus, $(\alpha_j(\epsilon_{n_j}(s)))_{j>i,s_l}$ is a $\ll$-increasing sequence in $T$ and its supremum exists.

Lastly, we have to check that $\gamma_i$ is indeed a $\CatPoM$-morphism. It is trivial to see that $\gamma_i$ preserves the order. Now let $s',s\in \bigcup\limits_{l}M_l$. We can find $k\in\N$ big enough such that both $s,s'$ belong to $M_k$. Using (U2), we know that $\epsilon_j(s')+\epsilon_j(s)=\epsilon_j(s'+s)$ for any $j>k+1$. Thus for any $j$ big enough (in particular $j>k+1$), we have that
\[
\tau_{j+1\infty}\circ c_{j+1}\circ\sigma_{ij+1}(\epsilon_{n_j}(s'))+\tau_{j+1\infty}\circ c_{j+1}\circ\sigma_{ij+1}(\epsilon_{n_j}(s))=\tau_{j+1\infty}\circ c_{j+1}\circ\sigma_{ij+1}(\epsilon_{n_j}(s'+s)).
\]
from which the result follows.
\end{proof}

\begin{lma}
\label{lma:PoMmorphCu}
Let $S,T$ be $\Cu$-semigroups and let $B\subseteq S$ be a $\CatPoM$ such that $B$ is dense in $S$. For any $\CatPoM$-morphism $\alpha:B\longrightarrow T$, there exists a naturally associated generalized $\Cu$-morphism $\overline{\alpha}:S\longrightarrow T$ (that is, a $\CatPoM$-morphism that respects suprema of increasing sequences) such that $\overline{\alpha}\leq\alpha$. Moreover, if $\alpha$ preserves the compact-containment relation, then so does $\overline{\alpha}$, or equivalently, $\overline{\alpha}$ is a $\Cu$-morphism.

\end{lma}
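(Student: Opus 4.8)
The plan is to define $\overline{\alpha}$ by approaching each $s\in S$ from below through the dense submonoid $B$, namely
\[
\overline{\alpha}(s):=\sup\{\alpha(b)\mid b\in B,\ b\ll s\}.
\]
Before anything else I would check this is well defined. Density of $B$ furnishes a $\ll$-increasing sequence $(b_n)_n$ in $B$ with $\sup_n b_n=s$; since $\alpha$ is order-preserving, $(\alpha(b_n))_n$ is increasing and its supremum exists by (O1). Using the definition of $\ll$ (any $b\ll s=\sup_n b_n$ satisfies $b\le b_k$ for some $k$, whence $\alpha(b)\le\alpha(b_k)$), I would argue that $\sup_n\alpha(b_n)$ is the least upper bound of the whole set $\{\alpha(b)\mid b\ll s\}$, so the displayed supremum exists and is computed by any approximating sequence; the same comparison shows independence of the chosen sequence. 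Specialising to $s\in B$ and using $b\ll s\Rightarrow\alpha(b)\le\alpha(s)$ gives $\overline{\alpha}(b)\le\alpha(b)$, i.e.\ $\overline{\alpha}\le\alpha$ on $B$.

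Next I would verify that $\overline{\alpha}$ is a $\CatPoM$-morphism. Monotonicity is immediate, because $b\ll s\le t$ forces $b\ll t$, so the defining set grows with $s$; and $\overline{\alpha}(0)=0$ since $0\ll 0$ while $b\ll 0$ forces $b=0$. Additivity is the first place the axioms genuinely enter: given $\ll$-increasing sequences $(b_n)_n\to s$ and $(c_n)_n\to t$ in $B$, the sequence $(b_n+c_n)_n$ lies in $B$, is $\ll$-increasing by (O3) and has supremum $s+t$ by (O4); combining this with additivity of $\alpha$ on $B$ and (O4) once more yields $\overline{\alpha}(s+t)=\overline{\alpha}(s)+\overline{\alpha}(t)$.

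The crux of the argument, and the step I expect to be the main obstacle, is showing that $\overline{\alpha}$ preserves suprema of increasing sequences. Given an increasing sequence $(s_k)_k$ with supremum $s$, monotonicity gives $\sup_k\overline{\alpha}(s_k)\le\overline{\alpha}(s)$ for free, so the work is the reverse inequality. Here I would fix a $\ll$-increasing approximating sequence $(b_n)_n\to s$ in $B$ and exploit the gap $b_n\ll b_{n+1}$: since $b_{n+1}\ll s=\sup_k s_k$ there is $k$ with $b_{n+1}\le s_k$, whence $b_n\ll s_k$ and therefore $\alpha(b_n)\le\overline{\alpha}(s_k)\le\sup_k\overline{\alpha}(s_k)$. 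Letting $n$ vary gives $\overline{\alpha}(s)=\sup_n\alpha(b_n)\le\sup_k\overline{\alpha}(s_k)$. The subtlety to watch is that $b_n\le s_k$ alone would not yield $b_n\ll s_k$; it is precisely the room provided by the $\ll$-increasing approximation together with density that converts a $\le$-comparison into the needed $\ll$-comparison.

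Finally, for the \emph{moreover} I would assume $\alpha$ preserves $\ll$ and derive the same for $\overline{\alpha}$. Given $s'\ll s$, applying density twice interpolates $b_1,b_2\in B$ with $s'\ll b_1\ll b_2\ll s$. Then
\[
\overline{\alpha}(s')\le\overline{\alpha}(b_1)\le\alpha(b_1)\ll\alpha(b_2)\le\overline{\alpha}(s),
\]
where the strict step uses that $\alpha$ preserves $\ll$ and the last inequality uses $b_2\ll s$ with $b_2\in B$. Sandwiching a $\ll$-relation between two $\le$-relations gives $\overline{\alpha}(s')\ll\overline{\alpha}(s)$, so $\overline{\alpha}$ is a genuine $\Cu$-morphism.
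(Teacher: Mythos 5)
Your proposal is correct, and its overall strategy coincides with the paper's: extend $\alpha$ by approximating each $s\in S$ from below through $B$ and taking suprema. The difference is in the packaging of the definition and, consequently, in the hardest step. The paper defines $\overline{\alpha}(s):=\sup_n\alpha(b_n)$ for a chosen $\ll$-increasing sequence $(b_n)_n$ in $B$ with supremum $s$, and must first prove independence of the chosen sequence; you instead take the supremum of $\alpha$ over the whole set $\{b\in B\mid b\ll s\}$ and verify that this supremum exists because it is realized by any approximating sequence. The payoff appears when proving that $\overline{\alpha}$ preserves suprema of increasing sequences: the paper builds a doubly-indexed array $(b_{k,n})$ with $b_{k,n}\ll b_{k,n+1}$ and runs a diagonal argument (and in doing so invokes countability of $B$, which is not actually a hypothesis of the lemma), whereas your set-based definition makes the inequality $\alpha(b_n)\leq\overline{\alpha}(s_k)$ immediate from $b_n\ll b_{n+1}\leq s_k$, so the reverse inequality drops out in two lines with no countability and no diagonalization. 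Your treatment of the \emph{moreover} part, interpolating $s'\ll b_1\ll b_2\ll s$ with $b_1,b_2\in B$ and sandwiching $\alpha(b_1)\ll\alpha(b_2)$ between two inequalities, is also slightly cleaner than the paper's phrasing, which momentarily writes $\alpha(s)$ for $s$ not necessarily in $B$. In short: same construction, but your route through the supremum-preservation step is more economical and marginally more general.
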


\begin{proof}
Let $\alpha:B\longrightarrow T $ be a $\CatPoM$-morphism. We are going to extend $\alpha$ to $S$: Let $(b_n)_n,(c_n)_n$ be two $\ll$-increasing sequences in $B$ such that they have the same supremum in $S$. That is, $\sup\limits_n b_n= \sup\limits_n c_n$ in $S$. First, observe that $\alpha(b_n)_n, \alpha(c_n)_n$ are increasing sequences in $T$ and hence they have a supremum. Also, since $(b_n)_n$ is a $\ll$-increasing sequence, it follows that for any $n\in\N$, there exists some $m\geq n$ such that $b_n\leq c_m$. Besides $\alpha$ is a $\CatPoM$-morphism, so $\alpha(b_n)\leq \alpha(c_m)\leq \sup\limits_n \alpha(c_n)$, for all $n\in\N$. It follows that $\sup\limits_n \alpha(b_n) \leq \sup\limits_n \alpha(c_n)$. By symmetry, we get the converse inequality to conclude the following: $\sup\limits_n \alpha(b_n) =\sup\limits_n \alpha(c_n)$ for any two $\ll$-increasing sequences of $B$ that have the same supremum in $S$. 

Now let $s\in S$. Since $B$ is dense in $S$, then there exists a $\ll$-increasing sequence $(b_n)_n$ in $B$ whose supremum is $s$. We have just proved that $\sup\limits_n\alpha(b_n)$ does not depend on the sequence $(b_n)_n$ chosen. Thus we define
\[
\begin{array}{ll}
\overline{\alpha}:S\longrightarrow T\\
\hspace{0,57cm} s\longmapsto \sup\limits_n\alpha(b_n)
\end{array}
\]
The fact that $\overline{\alpha}\leq \alpha$ is trivial and left to the reader to check. Let us prove that $\overline{\alpha}$ is a generalized $\Cu$-morphism. Let $s,t\in S$ and consider two $\ll$-increasing sequences $(b_n)_n,(c_n)_n$ in $B$ such that $s=\sup\limits_nb_n$ and $t=\sup\limits_nc_n $.

Suppose that $s\leq t$. We have $\sup\limits_n b_n\leq \sup\limits_n c_n $. From what we have proved, this implies that $\sup\limits_n\alpha(b_n)\leq \sup\limits_n\alpha(c_n) $. That is, $\overline{\alpha}(s)\leq \overline{\alpha}(t)$. 
Now put $x:=s+t\in S$. Then $(b_n+c_n)_n$ is a sequence of $B$, since $B$ is a $\CatPoM$, and obviously, it is $\ll$-increasing towards $x$. Using axiom (O4) and the fact that $\alpha$ is a $\CatPoM$-morphism, we see that $\overline{\alpha}(x)=\sup\limits_n\alpha(b_n+c_n)= \sup\limits_n(\alpha(b_n))+ \sup\limits_n(\alpha(c_n))=\overline{\alpha}(s)+\overline{\alpha}(t)$, which proves that $\overline{\alpha}:S\longrightarrow T$ is a well-defined $\CatPoM$-morphism.

Let $(s_n)_n$ be an increasing sequence in $S$ and let $s:=\sup\limits_n s_n$.  Then $(\overline{\alpha}(s_n))_n $ is an increasing sequence in $T$ and hence, it has a supremum. We have to show that $\sup\limits_n (\overline{\alpha}(s_n))=\overline{\alpha}(s)$. On the one hand, $\overline{\alpha}(s_n)\leq\overline{\alpha}(s)$ for any $n\in\N$, from which we obtain $\sup\limits_n(\overline{\alpha}(s_n))\leq \overline{\alpha}(s)$. On the other hand, there exists a $\ll$-increasing sequence $(b_{k,n} )_k$ in $B$ such that $s_n=\sup\limits_n b_{k,n}$, for any $n\in\N$. Since $B$ is countable basis of $S$, it is not hard to construct these sequences recursively in such a way that $b_{k,n}\ll b_{k,n+1}$ for any $k,n\in\N$. Therefore, we can use a \textquoteleft diagonal-type\textquoteright\ argument as in the proof of \cite[Theorem 2.60]{T19} to see that the sequence $(b_{n,n})_n$ is $\ll$-increasing towards $s$. 
 Observe that $\alpha(b_{n,n})\leq \overline{\alpha}(s_n)$ for any $n\in\N$. Passing to supremum on the right side first, and then on the left side, we get that $\sup\limits_n \alpha(b_{n,n})\leq \sup\limits_n (\overline{\alpha}(s_n))$. In other words, $\overline{\alpha}(s)\leq \sup\limits_n (\overline{\alpha}(s_n))$. We conclude that $\alpha:S\longrightarrow T$ is a well-defined generalized $\Cu$-morphism dominated by $\alpha$. 

Finally, assume that $\alpha:B\longrightarrow T$ preserves the compact-containment relation. Then for any two $s,t\in S$ such that $s\ll t$, consider any $\ll$-increasing sequence $(c_n)_n$ in $B$ whose supremum is $t$. Then we can find some $m\in\N$ such that $s\ll c_m\ll c_{m+1}\ll t$. We obtain $\overline{\alpha}(s)\leq\alpha(s)\ll \alpha(c_m)\leq \overline{\alpha}(t)$, which gives us that $\alpha:S\longrightarrow T$ is a $\Cu$-morphism.
\end{proof}

\begin{thm}
\label{cor:gammaconstruction}
Assume that there is a one-sided approximate intertwining from $(S_i,\sigma_{ij})_{i\in\N}$ to $(T_i,\tau_{ij})_{i\in\N}$, where $(S_i,\sigma_{ij})_{i\in\N}$ and $(T_i,\tau_{ij})_{i\in\N}$ are inductive sequences of uniformly based $\Cu$-semigroups with respective limits $(S,\sigma_{i\infty})_i$ and $(T,\tau_{i\infty})_i$ in $\Cu$. 

Then there exists a generalized $\Cu$-morphism $\gamma:S\longrightarrow T$ such that the following diagram is commutative for any $i\in\N$:
\[
\xymatrix{
S_i\ar[rr]^{\sigma_{i\infty}}\ar[dr]_{\gamma_i}&& S\ar[dl]^{\gamma}\\
&T&
}
\]
where $\gamma_i$ is obtained from \autoref{lma:Cuonesidedapprox} combined with \autoref{lma:PoMmorphCu}.
\end{thm}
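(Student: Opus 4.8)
The plan is to assemble the family of generalized $\Cu$-morphisms $\gamma_i\colon S_i\to T$ --- each obtained by taking the $\CatPoM$-morphism on $\bigcup_l M_l\subseteq(S_i)_\ll$ produced in \autoref{lma:Cuonesidedapprox} and extending it through \autoref{lma:PoMmorphCu}, which applies because $\bigcup_l M_l$ is dense in $S_i$ --- into a single morphism out of the limit by means of the universal property of $\varinjlim$. First I would record that each $\gamma_i$ is a genuine generalized $\Cu$-morphism on all of $S_i$, and that, since the connecting maps $\sigma_{i,i+1}$ and the $\gamma_i$ all preserve suprema of increasing sequences, every composite $\gamma_{i+1}\circ\sigma_{i,i+1}$ is again a generalized $\Cu$-morphism $S_i\to T$.

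The heart of the argument is the compatibility relation $\gamma_{i+1}\circ\sigma_{i,i+1}=\gamma_i$, which is forced by the desired triangle (as $\gamma_i=\gamma\circ\sigma_{i\infty}=\gamma\circ\sigma_{i+1\infty}\circ\sigma_{i,i+1}=\gamma_{i+1}\circ\sigma_{i,i+1}$) and must therefore be established first. Because both sides are generalized $\Cu$-morphisms and $\bigcup_l M_l$ is dense, it suffices to check the equality on an element $s\in M_l\subseteq(S_i)_\ll$, where I can compare the explicit suprema coming from \autoref{lma:Cuonesidedapprox}. The key manipulations are: rewrite the $j$-th term of $\gamma_i(s)$ using $\sigma_{i,j+1}=\sigma_{i+1,j+1}\circ\sigma_{i,i+1}$; use condition (i) of \autoref{dfn:Cuonesidedapprox} so that the cut-downs $\epsilon_{n_j}(s)\in M_{n_j}$ are carried into the matching basis pieces of $S_{j+1}$; and use (U3)--(U4) so that $\epsilon_{n_j}$ recovers $s$ in the limit. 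The point I expect to be most delicate --- and the main obstacle --- is showing that the two cut-down procedures, \emph{push forward by $\sigma_{i,i+1}$ then cut down} (implicit in $\gamma_i$) and \emph{cut down then push forward} (implicit in $\gamma_{i+1}\circ\sigma_{i,i+1}$), are mutually cofinal, so that the two increasing suprema agree; this is where the uniformity built into the basis, together with $\sigma_{i,i+1}(\epsilon_{n_j}(s))\ll\sigma_{i,i+1}(s)$ and a diagonal/interleaving argument, does the work.

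Once compatibility is in place, I would invoke the universal property of the inductive limit, read off from the characterization in \autoref{prop:caralimicu}. Concretely, for $s\in S$ choose by (L1) a sequence $(s_i)_i$ with $s_i\in S_i$, $\sigma_{i,i+1}(s_i)\ll s_{i+1}$ and $s=\sup_i\sigma_{i\infty}(s_i)$, and set $\gamma(s):=\sup_i\gamma_i(s_i)$. Compatibility together with monotonicity makes $(\gamma_i(s_i))_i$ increasing, so the supremum exists; property (L2) gives independence of the chosen representing sequence; and additivity (via (O4)) together with preservation of suprema of increasing sequences (inherited from each $\gamma_i$) show that $\gamma$ is a well-defined generalized $\Cu$-morphism. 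Finally $\gamma\circ\sigma_{i\infty}=\gamma_i$ follows by applying the construction to $s=\sigma_{i\infty}(s_i)$, which yields the required commuting triangle. Since the $\gamma_i$ are only $\CatPoM$-morphisms on $\bigcup_l M_l$ and need not preserve the way-below relation, $\gamma$ is obtained as a generalized $\Cu$-morphism only, in accordance with the statement.
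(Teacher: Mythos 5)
Your proposal is correct and follows essentially the same route as the paper: assemble the morphisms $\gamma_i$ from \autoref{lma:Cuonesidedapprox} and \autoref{lma:PoMmorphCu}, check the compatibility $\gamma_{i+1}\circ\sigma_{i,i+1}=\gamma_i$, and pass to the limit via (L1)/(L2) by setting $\gamma(s):=\sup_i\gamma_i(s_i)$. The only difference is one of emphasis: the paper asserts the compatibility relation without proof, whereas you correctly single it out as the real content and sketch the cofinality/interleaving argument (comparing \textquoteleft cut down then push forward\textquoteright\ with \textquoteleft push forward then cut down\textquoteright) needed to establish it.
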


\begin{proof}
Combining \autoref{lma:Cuonesidedapprox} together with \autoref{lma:PoMmorphCu}, we have in fact constructed generalized $\Cu$-morphisms $\gamma_i:S_i\longrightarrow T$, for any $i\in\N$, such that $\gamma_i=\gamma_j\circ\sigma_{ij}$ for any $i\leq j$. By universal properties of inductive limits, we obtain a (unique) generalized $\Cu$-morphism $\gamma:S\longrightarrow T$ that commutes with all $\gamma_i$. More explicitly, one can build $\gamma$ as follows: 

Consider $\gamma:\bigcup\limits_{i\in\N}\sigma_{i\infty}(S_i)\longrightarrow T$ that sends $s_i\longmapsto\gamma_i(s_i)$. It is easy to check that $\gamma$ is a well-defined $\CatPoM$-morphism. Moreover, (L1) says that $\bigcup\limits_{i\in\N}\sigma_{i\infty}(S_i)$ is dense in $S$. Thus, using \autoref{lma:PoMmorphCu}, the following map is a well-defined generalized $\Cu$-morphism:
\[
	\begin{array}{ll}
		\gamma: S\longrightarrow T\\
	\hspace{0,6cm} s\longmapsto \sup\limits_i(\gamma_i(s_i))
	\end{array}
	\]
where $(s_i)_{i\in\N}$ is any sequence as in (L1). The universal property of commutativity is easy to check and left to the reader.
\end{proof}

\begin{dfn}
\label{dfn:2sidedapprox}
Let $(S_i,\sigma_{ij})_{i\in\N}$ and $(T_i,\tau_{ij})_{i\in\N}$ be two inductive sequences of uniformly based $\Cu$-semigroups. Let $(S,\sigma_{i\infty})_i$ and $(T,\tau_{i\infty})_i$ be their respective inductive limits. 
For any $i\in\N$, consider a uniform basis of $S_i$, respectively of $T_i$, that we both denote by $(M_n,\epsilon_n)_n$. (Without referring to the index $i$ to ease notations and the basis we refer to is always clear.)

 Suppose that we have the following diagram:
\[
\xymatrix{
\dots\ar[r]& S_{i}\ar[d]^{c_{i}}\ar[rr]^{\sigma_{ii+1}} && S_{i+1}\ar[rr]^{\sigma_{i+1i+2}}\ar[rr]\ar[d]^{c_{i+1}} &&\dots \\
\dots\ar[r] &T_{i}\ar[urr]_{d_i}\ar[rr]_{\tau_{ii+1}} && T_{i+1}\ar[rr]_{\tau_{i+1i+2}}\ar[urr]_{d_{i+1}}\ar[rr]&&\dots
} 
\]
where $c_i$ and $d_i$ are $\Cu$-morphisms, for any $i\in\N$. Moreover, assume that there are two strictly increasing sequences of natural numbers $(n_i)_i$ and $(m_i)_i$ such that:

(i) For any $i\leq j$, we have $\sigma_{ij}(M_{n_j})\subseteq M_{n_j}$ and $\tau_{ij}(M_{m_j})\subseteq M_{m_j}$, where $\sigma_{ij}:=\sigma_{j-1j}\circ ...\circ\sigma_{ii+1}$ and $\tau_{ij}:=\tau_{j-1j}\circ ...\circ\tau_{ii+1}$.

(ii) $d_{i}\circ c_i\underset{M_{n_i}}\approx\sigma_{ii+1}$ and $c_{i+1}\circ d_i\underset{M_{m_i}}\approx\tau_{ii+1}$, for any $i\in\N$.

(iii) For any $i\in\N$, we also have $c_i(M_{n_i})\subseteq M_{m_i}$ and $d_i(M_{m_i})\subseteq M_{n_{i+1}}$.\\
Then we say the diagram is a \emph{two-sided approximate intertwining}.
\end{dfn}

\begin{lma}
\label{lma:PoMisoCu}\emph{(\cite[Lemma 4.1.10]{C20one})}
Let $S$ be a $\Cu$-semigroup and let $T$ be a $\CatPoM$. Let $f:S\longrightarrow T$ be a $\CatPoM$-isomorphism. Then, $T$ is a $\Cu$-semigroup and $f$ is a $\Cu$-isomorphism. A fortiori, $S\simeq T$ as $\Cu$-semigroups.
\end{lma}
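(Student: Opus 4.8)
The plan is to exploit the fact that a $\CatPoM$-isomorphism $f$ is in particular an order isomorphism — that is, both $f$ and $f^{-1}$ preserve $\leq$, so $x\leq y$ if and only if $f(x)\leq f(y)$ — together with the observation that the entire $\Cu$-structure (suprema of increasing sequences, the way-below relation, and the axioms (O1)--(O4)) is defined purely in order-theoretic and additive terms. Since $f$ preserves and reflects both the order and the addition, it should transport all of this structure from $S$ to $T$ verbatim, and the whole statement should reduce to two elementary transport principles followed by routine verification.

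First I would record those two principles. (a) An order isomorphism preserves suprema: if $A\subseteq S$ and $\sup A$ exists, then $f(\sup A)=\sup f(A)$, and symmetrically for $f^{-1}$. This is the standard argument that $f(\sup A)$ is an upper bound of $f(A)$, while any upper bound $t$ of $f(A)$ satisfies $f^{-1}(t)\geq \sup A$, whence $t\geq f(\sup A)$. (b) An order isomorphism preserves the way-below relation in both directions: $x\ll y$ in $S$ if and only if $f(x)\ll f(y)$ in $T$. Indeed, given an increasing sequence $(w_n)$ in $T$ admitting a supremum with $\sup w_n\geq f(y)$, the sequence $(f^{-1}(w_n))$ is increasing in $S$ with $\sup f^{-1}(w_n)\geq y$ by (a); so if $x\ll y$ some $f^{-1}(w_k)\geq x$, i.e. $w_k\geq f(x)$. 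The converse is symmetric, applying the same reasoning to $f^{-1}$.

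With these in hand the axioms for $T$ follow mechanically. For (O1): an increasing sequence $(t_n)$ in $T$ pulls back along $f^{-1}$ to an increasing sequence in $S$, which has a supremum by (O1) in $S$, and the image of that supremum under $f$ is $\sup t_n$ by (a). For (O2): given $t\in T$, a $\ll$-increasing sequence in $S$ with supremum $f^{-1}(t)$ provided by (O2) maps under $f$, using (a) and (b), to a $\ll$-increasing sequence with supremum $t$. For (O3) and (O4) I would pull the relevant elements and sequences back through $f^{-1}$, apply the corresponding axiom in $S$, and push forward again, using that $f^{-1}$ is additive (so $f^{-1}(a+b)=f^{-1}(a)+f^{-1}(b)$) together with (a) and (b); the additivity of both $f$ and $f^{-1}$ is precisely what allows the sums to commute with the transport. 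This establishes that $T$ is a $\Cu$-semigroup; and $f$, being a $\CatPoM$-morphism that by (a) and (b) preserves suprema of increasing sequences and the way-below relation — as does $f^{-1}$ by symmetry — is a $\Cu$-isomorphism, giving $S\simeq T$.

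I do not anticipate a genuine obstacle: the content is simply that the $\Cu$-structure is order-theoretic and hence invariant under order isomorphism. The only point requiring mild care is principle (b), where one must respect the precise phrasing of the way-below relation, which quantifies only over increasing sequences that actually possess a supremum. This is harmless once (O1) for $T$ — obtained by transporting (O1) for $S$ — guarantees that every increasing sequence in $T$ has a supremum, so that the quantification on the $T$-side and on the $S$-side correspond bijectively under $f$.
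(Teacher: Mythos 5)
Your proof is correct. The paper does not actually prove this lemma --- it imports it by citation from \cite[Lemma 4.1.10]{C20one} --- and the argument given there is the same one you propose: a $\CatPoM$-isomorphism is an additive order isomorphism, hence preserves and reflects suprema of increasing sequences and the way-below relation, so the axioms (O1)--(O4) and the $\Cu$-morphism property transport verbatim; your closing remark about the quantification in the definition of $\ll$ ranging only over sequences admitting a supremum is exactly the one point that needs care, and you handle it correctly.
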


\begin{thm}
Assume that there is a two-sided approximate intertwining from $(S_i,\sigma_{ij})_{i\in\N}$ to $(T_i,\tau_{ij})_{i\in\N}$, where $(S_i,\sigma_{ij})_{i\in\N}$ and $(T_i,\tau_{ij})_{i\in\N}$ are inductive sequences of uniformly based $\Cu$-semigroups with respective limits $(S,\sigma_{i\infty})_i$ and $(T,\tau_{i\infty})_i$ in $\Cu$. 

Then $S\simeq T$ as $\Cu$-semigroups.
\end{thm}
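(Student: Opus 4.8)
The plan is to reduce the two-sided situation to two applications of \autoref{cor:gammaconstruction}, producing a pair of mutually inverse limit morphisms, and then to invoke \autoref{lma:PoMisoCu} to upgrade a $\CatPoM$-isomorphism to a $\Cu$-isomorphism. First I would extract two one-sided approximate intertwinings out of the two-sided one. The sequence $(c_i)_i$ should yield a one-sided approximate intertwining from $(S_i,\sigma_{ij})$ to $(T_i,\tau_{ij})$ in the sense of \autoref{dfn:Cuonesidedapprox}, i.e. $c_{i+1}\circ\sigma_{ii+1}\underset{M_{n_i}}{\approx}\tau_{ii+1}\circ c_i$; symmetrically, the sequence $(d_i)_i$, read against the cofinal reindexing $\tilde S_i:=S_{i+1}$ (which has the same limit $S$), should yield a one-sided approximate intertwining from $(T_i,\tau_{ij})$ to $(S_i,\sigma_{ij})$, i.e. $d_{i+1}\circ\tau_{ii+1}\underset{M_{m_i}}{\approx}\sigma_{i+1i+2}\circ d_i$. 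To obtain the first relation I would chain the two comparisons in (ii) of \autoref{dfn:2sidedapprox}: for $g'\ll g$ in $M_{n_i}$, condition (iii) places $c_i(g'),c_i(g)$ in $M_{m_i}$, so $c_{i+1}\circ d_i\underset{M_{m_i}}{\approx}\tau_{ii+1}$ relates $\tau_{ii+1}\circ c_i$ to $c_{i+1}\circ d_i\circ c_i$, while post-composing $d_i\circ c_i\underset{M_{n_i}}{\approx}\sigma_{ii+1}$ with the $\Cu$-morphism $c_{i+1}$ relates $c_{i+1}\circ d_i\circ c_i$ to $c_{i+1}\circ\sigma_{ii+1}$. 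Since post-composition with a $\Cu$-morphism preserves strict comparison (the morphism preserving $\ll$), the only subtlety is that chaining two strict comparisons requires an interpolating element $g'\ll g''\ll g$; this I would produce from (O2) together with the basis maps $\epsilon_n$ so as to keep it inside the relevant $M_\bullet$.

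With the two one-sided intertwinings in hand, \autoref{cor:gammaconstruction} produces generalized $\Cu$-morphisms $\gamma:S\longrightarrow T$ and $\delta:T\longrightarrow S$ commuting with the respective inductive-limit structure maps, that is $\gamma\circ\sigma_{i\infty}=\gamma_i$ and $\delta\circ\tau_{i\infty}=\delta_i$, where $\gamma_i$ and $\delta_i$ are the telescoping $\CatPoM$-morphisms of \autoref{lma:Cuonesidedapprox}.

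The heart of the argument is then to prove $\delta\circ\gamma=\id_S$ and $\gamma\circ\delta=\id_T$. Both composites are generalized $\Cu$-morphisms, so by \autoref{lma:PoMmorphCu} it suffices to check equality on the dense set $\bigcup_i\sigma_{i\infty}(\bigcup_l M_l)$. Fixing $s\in M_l\subseteq(S_i)_\ll$, I would unwind $\delta(\gamma(\sigma_{i\infty}(s)))$ through the two telescoping formulas and the commutation relations: the composite becomes a double supremum of terms of the (schematic) form $\sigma_{k+1\infty}\circ d_k\circ\tau_{jk}\circ c_j\circ\sigma_{ij}(\epsilon_{\bullet}(s))$. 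Iterating the one-sided relation $\tau_{jk}\circ c_j\underset{M_{n_j}}{\approx}c_k\circ\sigma_{jk}$ turns the inner block into $d_k\circ c_k\circ\sigma_{ik}(\epsilon_\bullet(s))$, and the two-sided relation $d_k\circ c_k\underset{M_{n_k}}{\approx}\sigma_{kk+1}$ then collapses each term between $\sigma_{i\infty}(\epsilon_-(s))$ and $\sigma_{i\infty}(\epsilon_+(s))$; passing to the supremum and using (U3), by which $(\epsilon_n(s))_n$ increases to $s$, yields $\delta\gamma\sigma_{i\infty}(s)=\sigma_{i\infty}(s)$. The computation of $\gamma\delta=\id_T$ is entirely symmetric. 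I expect this telescoping collapse to be the main obstacle: one must track the indices $n_i,m_i$ and repeatedly invoke conditions (i) and (iii) to ensure every intermediate element stays inside the $M_\bullet$ on which the comparisons of (ii) are valid, and exploit the $\ll$-room furnished by (U4) to replace the strict comparisons $\approx$ by the genuine two-sided inequalities needed to sandwich the limit.

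Finally, $\delta\circ\gamma=\id_S$ and $\gamma\circ\delta=\id_T$ exhibit $\gamma$ as a $\CatPoM$-isomorphism $S\longrightarrow T$ with inverse $\delta$. As $S$ is a $\Cu$-semigroup, \autoref{lma:PoMisoCu} promotes $\gamma$ to a $\Cu$-isomorphism, whence $S\simeq T$ as $\Cu$-semigroups.
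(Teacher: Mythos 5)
Your proposal follows essentially the same route as the paper: construct $\gamma$ and $\delta$ from the one-sided machinery of \autoref{lma:Cuonesidedapprox} and \autoref{cor:gammaconstruction}, show $\delta\circ\gamma=\id_S$ and $\gamma\circ\delta=\id_T$ by unwinding the double suprema and iterating the comparisons of (ii) while using (i) and (iii) to keep every intermediate element in the relevant $M_\bullet$, and finish with \autoref{lma:PoMisoCu}. If anything, you are more explicit than the paper about the preliminary step of chaining the two relations in (ii) (with an interpolating element) to verify that the two-sided data actually furnishes the one-sided intertwinings needed to invoke \autoref{cor:gammaconstruction}.
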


\begin{proof}
Consider the two generalized $\Cu$-morphisms $\gamma:S\longrightarrow T$ and $\delta:T\longrightarrow S$ explicitly constructed in the proof of \autoref{cor:gammaconstruction}. We are going to prove that $\gamma$ and $\delta$ are inverses of one another and the result will follow.
It is enough to show that for any $i\in\N$ and any $s\in S_i$, we have $\delta\circ\gamma_i(s)=\sigma_{i\infty}(s)$. 
As a matter of fact, since $(S_i)_\ll$ is dense in $S_i$ and since $\sigma_{i\infty}, \delta,\gamma_i$ are morphisms that preserve suprema of increasing sequences, it is enough to show the said property for every $s\in(S_i)_\ll$. 
Fix $i\in\N$ and let $s\in (S_i)_\ll$.  To begin, we observe the following:
\begin{align*}
	\delta	\circ\gamma_i(s)&=\delta\,(\,\sup\limits_{j}\tau_{j+1\infty}\circ c_{j+1}\circ\sigma_{ij+1}(\epsilon_{n_j}(s))\,)\\
	&=\sup\limits_{j} \,\delta\circ\tau_{j+1\infty}\circ c_{j+1}\circ\sigma_{ij+1}(\epsilon_{n_j}(s))\\
	&=\sup\limits_{j}\,\delta_{j+1}(\,c_{j+1}\circ \sigma_{ij+1}(\epsilon_{n_j}(s))\,)\\
&=\sup\limits_{j}\,\sup\limits_{j'>j+1}\,\sigma_{j'+1\infty}\circ d_{j'}\circ\tau_{j+1j'+1}(\,\epsilon_{n_{j'}}(c_{j+1}\circ\sigma_{ij+1}(\epsilon_{n_j}(s)))\,).
\end{align*}
Let us prove that $\delta\circ\gamma_i(s)\leq\sigma_{i\infty}(s)$ and the opposite inequality will be shown similarly: 

Since $(\epsilon_{n_j}(s))_{j>s_l}$ is $\ll$-increasing, for any $j>i,s_l$ we have that
\[
\begin{array}{ll}
\sigma_{ij+1}(\epsilon_{n_j}(s))\ll \sigma_{ij+1}(\epsilon_{n_{j+1}}(s)) \text{ in } S_{j+1}.\\
c_{j+1}\circ\sigma_{ij+1}(\epsilon_{n_{j-1}}(s))\ll c_{j+1}\circ\sigma_{ij+1}(\epsilon_{n_j}(s)) \text{ in } T_{j+1}.
\end{array}
\]
Using (i) and (iii) of \autoref{dfn:2sidedapprox}, we know that the above elements belong to $M_{n_{j+1}}\subseteq S_{j+1}$ and $M_{m_{j+1}}\subseteq  T_{j+1}$ respectively. Thus we can apply (ii) \autoref{dfn:2sidedapprox} as follows:
\[
\begin{array}{ll}
d_{j+1}\circ c_{j+1}\circ\sigma_{ij+1}(\epsilon_{n_j}(s))\ll \sigma_{ij+2}(\epsilon_{n_{j+1}}(s)) \text{ in } S_{j+2}.\\
\tau_{j+1j+2}\circ c_{j+1}\circ\sigma_{ij+1}(\epsilon_{n_{j-1}}(s))\ll c_{j+2}\circ d_{j+1}\circ c_{j+1}\circ\sigma_{ij+1}(\epsilon_{n_j}(s)) \text{ in } T_{j+2}.
\end{array}
\]
Using (i) and (iii) again, we know that the above elements belong to $M_{n_{j+2}}\subseteq S_{j+2}$ and $M_{m_{j+2}}\subseteq T_{j+2}$ respectively. Thus we can apply (ii) again, and repeating this process, we obtain that for any $j'>j+1$
\[
\begin{array}{ll}
d_{j'+1}\circ c_{j'+1}\circ \dots\circ d_{j+1}\circ c_{j+1}\circ\sigma_{ij+1}(\epsilon_{n_j}(s))\ll \sigma_{ij'+2}(\epsilon_{n_{j+1}}(s)) \text{ in } S_{j'+2}.\\
\tau_{j+1j'+1}\circ c_{j+1}\circ\sigma_{ij+1}(\epsilon_{n_{j-1}}(s))\ll  c_{j'+1}\circ d_{j'}\circ\dots \circ d_{j+1}\circ c_{j+1}\circ\sigma_{ij+1}(\epsilon_{n_j}(s)) \text{ in } T_{j'+1}.
\end{array}
\]
On the other hand, since $(\epsilon_{n_{j'}}(c_{j+1}\circ\sigma_{ij+1}(\epsilon_{n_{j-1}}(s))))_{j'}$ is $\ll$-increasing towards \break$c_{j+1}\circ\sigma_{ij+1}(\epsilon_{n_{j-1}}(s))$, for any $j'>j$ we have that
\[\tau_{j+1j'+1}(\epsilon_{n_{j'}}(c_{j+1}\circ\sigma_{ij+1}(\epsilon_{n_{j-1}}(s))))\ll \tau_{j+1j'+1}\circ c_{j+1}\circ\sigma_{ij+1}(\epsilon_{n_{j-1}}(s)).\] So we conclude that
\[
d_{j'+1}\circ\tau_{j+1j'+1}(\epsilon_{n_{j'}}(c_{j+1}\circ\sigma_{ij+1}(\epsilon_{n_{j-1}}(s))))\ll \sigma_{ij'+2}(\epsilon_{n_{j+1}}(s)) \text{ in } S_{j'+2}.
\]
Composing with $\sigma_{j'+1\infty}$, we obtain for any $j,j'$ big enough such that $j'>j+1$:
\[
\sigma_{j'+2\infty}\circ d_{j'+1}\circ\tau_{j+1j'+1}(\epsilon_{n_{j'}}(c_{j+1}\circ\sigma_{ij+1}(\epsilon_{n_{j-1}}(s))))\ll \sigma_{i\infty}(\epsilon_{n_{j+1}}(s)) \text{ in } S_{j'+2}\text{ in } S.
\]
Now, taking suprema over $j'$ first and then over $j$, we conclude that $\delta\circ\gamma_i(g)\leq\sigma_{i\infty}(g)$.
The converse inequality $\sigma_{i\infty}(g)\leq \delta\circ\gamma_i(g)$ is shown similarly by using the symmetrical comparison of morphisms. 

We conclude that for any $i\in\N$ and any $s\in S_i$, we have $\delta\circ\gamma_i(s)=\sigma_{i\infty}(s)$. It follows that $\delta\circ\gamma=\id_S$. Symmetrically, we have $\gamma\circ\delta=\id_T$. Now, since any $\CatPoM$-isomorphism between two $\Cu$-semigroups is in fact a $\Cu$-isomorphism (see \autoref{lma:PoMisoCu}), we conclude $S\simeq T$ as $\Cu$-semigroups through $\gamma$ and $\delta=\gamma^{-1}$.

\end{proof}
\section{Examples}
We have already seen that inductive limits of simplicial $\Cu$-semigroups are uniformly based. This part of the manuscript reveals other classes of $\Cu$-semigroups that admit a uniform basis. Namely, we aim to show that the Cuntz semigroup of $\mathcal{C}(X)$, where $X$ is a suitable topological space and the Cuntz semigroup of any one-dimensional $\NCCW$ complex both admit a uniform basis. 
To begin our study, we focus on the specific case of $\Cu(\mathcal{C}_0(]0,1[))$ to then extend our result to concrete Cuntz semigroups that are realized as $\Cu(\mathcal{C}(X)\otimes A)$, where $X$ is a compact one-dimensional CW complex (or equivalently, a finite graph) and $A$ is a finite dimensional $\CatCa$-algebra. As a consequence, the Cuntz semigroups of $\mathcal{C}([0,1])$ and $\mathcal{C}(\T)$ are both uniformly based. Then, we expand our results and show that the Cuntz semigroup of any one-dimensional $\NCCW$ complex is uniformly based.

We start by recalling some facts about lower-semicontinuous functions and how they relate to these concrete Cuntz semigroups. We then introduce a notion of \emph{chain-generating set} in view of simplifying the construction of uniform bases. (Note that all the concrete and abstract Cuntz semigroups that we study in the sequel contain such a set.)\\

\vspace{-0,2cm}$\hspace{-0,34cm}\bullet\,\,\textbf{Lower-semicontinuous functions}.$ Let $X$ be a topological space and let $S$ be a $\Cu$-semigroup. We say that a map $f:X\longrightarrow S$ is \emph{lower-semicontinuous} if for any $s\in S$, the set $\{t\in X \mid s\ll f(t)\}$ is open in $X$. We write $\Lsc(X,S)$ for the set of lower-semicontinuous functions from $X$ to $S$. 
Whenever $X$ is a second-countable locally compact Hausdorff space with covering dimension at most one and $S$ is a countably-based $\Cu$-semigroup, then $\Lsc(X,S)$ is itself a $\Cu$-semigroup. 
Moreover, for any separable $\CatCa$-algebra $A$ of stable rank one such that $\K_1(I)=0$ for every ideal of $A$, then $\Cu(\mathcal{C}_0(X)\otimes A)\simeq \Lsc(X,\Cu(A))$. (See \cite[Theorem 5.15 - Theorem 3.4]{APS11}.) In particular, we have $\Cu(\mathcal{C}_0(X))\simeq \Lsc(X,\overline{\N})$ for any such $X$.\\

\vspace{-0,2cm}$\hspace{-0,34cm}\bullet\,\,\textbf{Chain-decompositions - Chain-generating sets}.$ Let $(P,\leq)$ be a partially ordered set. Any totally ordered subset of $P$ will be called a \emph{chain}. We will be mostly interested in subsets that are totally ordered for the opposite order $\geq$, that  we may refer to as \emph{descending chains}. We now introduce a notion of \emph{chain-generating set} for both a positively ordered monoid and a $\Cu$-semigroup, that roughly consists of a (proper) subset which satisfies a \textquoteleft uniqueness and existence\textquoteright-type condition as follows:

\begin{dfn}
\label{dfn:chainmonoid}
Let $M$ be a $\CatPoM$.  We say that a (proper) subset $\Lambda\subsetneq M$ is a \emph{chain-generating set of $M$} if

(i) For any finite descending chains $(m'_n)_1^{k'}$ and $(m_n)_1^k$ in $\Lambda$, we have $\sum\limits_{n=0}^{k'} m_n \leq \sum\limits_{n=0}^k m_n$ if and only if $m'_n\leq m_n$ for all $0\leq n\leq \min(k',k)$. 

(ii) For any $m\in M$, there exists a finite descending chain $(m_n)_n$ in $\Lambda$ such that $m=\sum\limits_{n=0}^k m_n$. 

Note that we automatically get uniqueness of such a descending chain, up to adding or removing \textquoteleft tailing zeros\textquoteright. Therefore, we refer to it as \emph{the chain-decomposition of $m$}. (We omit the reference to the chain-generating set $\Lambda$ when the context is clear.)
\end{dfn}

\begin{dfn}\label{prg:chaindcp} Let $S$ be a $\Cu$-semigroup. We say that a subset $\Lambda\subseteq S_\ll$ is a \emph{chain-generating set of $S$} if

(i) For any countable descending chains $(s_n)_n$ and $(t_n)_n$ in $\Lambda$, we have $\sum\limits_{n=0}^\infty s_n \leq \sum\limits_{n=0}^\infty t_n$ if and only if $s_n\leq t_n$ for all $n\in\N$. 

(ii) For any $s\in S$, there exists a countable descending chain $(s_n)_n$ in $\Lambda$ such that $s=\sum\limits_{n=0}^\infty s_n$.

Note that we automatically get uniqueness of such a chain, up to adding or removing \textquoteleft tailing zeros\textquoteright. Therefore, we refer to it as \emph{the chain-decomposition of $s$}. (We omit the reference to the chain-generating set $\Lambda$ when the context is clear.)
\end{dfn}

\begin{prop}
Let $X$ be a (second-countable locally compact Hausdorff) topological space of covering dimension at most one. Then $\Lambda:=\Lsc(X,\{0,1\})$ is a chain-generating set of the $\Cu$-semigroup $\Lsc(X,\overline{\N})$. 
\end{prop}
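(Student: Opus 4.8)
The plan is to identify $\Lambda$ concretely and then read off both axioms of \autoref{prg:chaindcp} from a single pointwise computation. First I would observe that a function $f\colon X\to\{0,1\}$ is lower-semicontinuous precisely when $f^{-1}(1)=\{t\in X\mid 1\ll f(t)\}$ is open; hence $\Lambda$ is in canonical bijection with the lattice of open subsets of $X$, an element of $\Lambda$ being the indicator $\mathbf 1_U$ of an open set $U$. Under this identification a \emph{descending chain} $(\mathbf 1_{U_n})_n$ in $\Lambda$ is exactly a nested decreasing sequence $U_0\supseteq U_1\supseteq\cdots$ of open sets. I would also recall, from the structure theory of $\Lsc(X,\overline{\N})$ for $\dim X\le 1$, that the order is pointwise and that finite sums and suprema of increasing sequences are computed pointwise; consequently the countable sum $\sum_{n}\mathbf 1_{U_n}=\sup_k\sum_{n=0}^k\mathbf 1_{U_n}$ is the pointwise function
\[
f\colon t\longmapsto \#\{n\ge 0\mid t\in U_n\}.
\]
Because the $U_n$ are nested, $\{n\mid t\in U_n\}$ is an initial segment of $\N$, so $f(t)$ is the first index at which $t$ leaves the $U_n$ (and $f(t)=\infty$ iff $t$ lies in every $U_n$); in particular the level sets satisfy $\{t\mid f(t)>n\}=U_n$ for all $n$.

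For axiom (ii) (existence of a chain-decomposition), given $s\in\Lsc(X,\overline{\N})$ I would set $U_n:=\{t\in X\mid (n+1)\ll s(t)\}=\{t\mid s(t)>n\}$. Each $U_n$ is open by lower-semicontinuity of $s$, the sequence is nested decreasing, and the pointwise computation above gives $\sum_n\mathbf 1_{U_n}(t)=\#\{n\ge 0\mid s(t)>n\}=s(t)$ for every $t$ (the count being $\infty$ when $s(t)=\infty$). Hence $s=\sum_n\mathbf 1_{U_n}$ is a chain-decomposition in $\Lambda$.

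For axiom (i) (order comparison, hence uniqueness), the forward implication is immediate from pointwise addition: if $U_n\subseteq V_n$ for all $n$ then $\sum_n\mathbf 1_{U_n}\le\sum_n\mathbf 1_{V_n}$. For the converse, writing $f:=\sum_n\mathbf 1_{U_n}$ and $g:=\sum_n\mathbf 1_{V_n}$, the key point is that a descending chain is \emph{recovered} from its sum through its level sets: by the paragraph above $U_n=\{t\mid f(t)>n\}$ and $V_n=\{t\mid g(t)>n\}$. Thus $f\le g$ pointwise forces $\{f>n\}\subseteq\{g>n\}$, that is $U_n\subseteq V_n$, i.e.\ $\mathbf 1_{U_n}\le\mathbf 1_{V_n}$, for every $n$; the same recovery gives uniqueness of the decomposition up to tailing zeros. (One should also record the standing requirement $\Lambda\subseteq S_\ll$ of \autoref{prg:chaindcp}; this is the only point at which the ambient hypotheses on $X$ play a role beyond ensuring that $\Lsc(X,\overline{\N})$ is a $\Cu$-semigroup at all.)

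The main obstacle, such as it is, is not the combinatorics of nested open sets — which is routine — but justifying that the infinite sum $\sum_n\mathbf 1_{U_n}$ \emph{taken in the $\Cu$-semigroup} really coincides with the pointwise level function. This reduces to the facts that addition and suprema of increasing sequences in $\Lsc(X,\overline{\N})$ are pointwise, which is exactly where the covering-dimension-at-most-one hypothesis enters (via the identification $\Cu(\mathcal C_0(X))\simeq\Lsc(X,\overline{\N})$ recalled in the preliminaries). Once that is in hand, both axioms follow from the single observation that the assignments $(U_n)_n\mapsto\sum_n\mathbf 1_{U_n}$ and $s\mapsto(\{t\mid s(t)>n\})_n$ are mutually inverse and order-preserving bijections between descending chains in $\Lambda$ and elements of $\Lsc(X,\overline{\N})$.
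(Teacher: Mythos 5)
Your proposal is correct and follows essentially the same route as the paper: both identify $\Lambda$ with indicators of open sets, produce the decomposition of $s$ from its level sets $\{t\mid s(t)>n\}$, and prove the converse comparison by evaluating pointwise and observing that $(\sum_n\mathbf 1_{U_n})(t)=\card\{n\mid t\in U_n\}$ recovers the nested chain. Your version merely makes explicit the initial-segment observation and the pointwise computation of suprema that the paper leaves implicit.
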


\begin{proof}
We recall that $\Lsc(X,\overline{\N})$ is a $\Cu$-semigroup. Also, observe that for any two open sets $U,V\subseteq X$, $1_U\leq 1_V$ if and only if $U\subseteq V$ and $1_U\ll 1_V$ if and only if $\overline{U}\subseteq V$. 
Let $f\in\Lsc(X,\overline{\N})$ and write $V_n:=f^{-1}(]n;+\infty])$. By the lower-semicontinuity of $f$, $(V_n)_n$ is a well-defined descending chain of open sets of $X$ that naturally satisfies $\overset{\infty}{\underset{n=0}{\sum}}1_{V_n}=f$. Now let $(1_{V_n})_n$ and $(1_{W_n})_n$ be descending chains in $\Lambda$. If $1_{V_n}\leq 1_{W_n}$ for any $n\in \N$, then $\sum\limits_{n=0}^k 1_{V_n}\leq \sum\limits_{n=0}^k 1_{W_n}$ for any $k\in\N$. Passing to suprema over $k$, on the right side first and then on the left side, we deduce that $\sum\limits_{n=0}^\infty 1_{V_n}\leq \sum\limits_{n=0}^\infty 1_{W_n}$. Conversely, if $\sum\limits_{n=0}^\infty 1_{V_n}\leq \sum\limits_{n=0}^\infty 1_{W_n}$, then for any $t\in X$, we have $(\sum\limits_{n=0}^\infty 1_{V_n})(t)\leq (\sum\limits_{n=0}^\infty 1_{W_n})(t)$. Observe that in fact, $(\sum\limits_{n=0}^\infty 1_{V_n})(t)=\card(\{n\mid t\in V_n\})$, hence we deduce that $1_{V_n}\leq 1_{W_n}$ for any $n\in\N$. 
\end{proof}

\begin{thm}
\label{prop:chaindcp}
Let $S$ be a $\Cu$-semigroup that has a chain-generating set $\Lambda$.

(i) $\Lambda$ is an order-hereditary and upward-directed subset of $S_\ll$. 

(ii) The chain-decomposition $(s_n)_n$ of $s$ has finitely many non-zero elements whenever $s\in S_\ll$.

(iii) Let $s,t\in S$ and let $(s_n)_n,(t_n)_n$ be their respective chain-decompositions. If $s\ll t$ then $s_n\ll t_n$ for any $n\in\N$. The converse is true whenever $s\in S_\ll$.
\end{thm}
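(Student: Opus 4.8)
The plan is to treat the three items in order, extracting from the defining condition (i) of \autoref{prg:chaindcp} a single recurring principle: comparison of elements is \emph{coordinatewise} for chain-decompositions. Throughout I write $(s_n)_n$ for the chain-decomposition of $s$, and I use freely that the chain-decomposition of an element $a\in\Lambda$ is $(a,0,0,\dots)$ and that of $(k+1)a$ is $(\underbrace{a,\dots,a}_{k+1},0,\dots)$, both being honest descending chains in $\Lambda$.

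For (i), order-heredity is immediate: if $x\le y\in\Lambda$, comparing the chain-decomposition $(x_n)_n$ of $x$ with $(y,0,0,\dots)$ via condition (i) forces $x_n=0$ for $n\ge 1$, so $x=x_0\in\Lambda$. For upward-directedness, given $a,b\in\Lambda$ let $(c_n)_n$ be the chain-decomposition of $a+b$; from $a\le a+b$ and $b\le a+b$ the same comparison yields $a\le c_0$ and $b\le c_0$, and $c_0\in\Lambda$ is the desired upper bound. For (ii), if $s\ll t$ then, writing $t=\sup_K\sum_{k=0}^K t_k$, compact containment gives $s\le\sum_{k=0}^{K_0}t_k$ for some $K_0$, and comparing chain-decompositions kills every coordinate above $K_0$, so $s$ has finite support.

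The technical heart is a \emph{continuity} lemma: the chain-decomposition commutes with suprema of increasing sequences, i.e. if $x^{(m)}\uparrow x$ then $x_k=\sup_m x_k^{(m)}$ for every $k$. The inequality $\sup_m x_k^{(m)}\le x_k$ is coordinatewise monotonicity. For the reverse, I approximate $x_k$ from below by elements $a\in\Lambda$ with $a\ll x_k$ (these exist by (O2), and lie in $\Lambda$ by the order-heredity just proved); since $a\ll x_k\le x_j$ for $j\le k$, axiom (O3) gives $(k+1)a\ll\sum_{j=0}^k x_j\le x$, so $(k+1)a\le x^{(m_0)}$ for some $m_0$, and the coordinatewise comparison at level $k$ yields $a\le x_k^{(m_0)}\le\sup_m x_k^{(m)}$; taking the supremum over such $a$ gives $x_k\le\sup_m x_k^{(m)}$. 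A first dividend is that $\Lambda$ is closed under suprema of increasing sequences (the decomposition of such a supremum is again concentrated in degree $0$), which in turn shows that way-below for an element $b\in\Lambda$ may be tested using increasing sequences drawn from $\Lambda$ alone: given $(z_m)\uparrow$ with $\sup_m z_m\ge b$, the sequence $((z_m)_0)_m\subseteq\Lambda$ increases to $\ge b_0=b$ while lying below $(z_m)_m$.

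For (iii), the converse is the easy direction: if $s\in S_{\ll}$ then $s$ has finite support $\{0,\dots,N\}$ by (ii), and from $s_n\ll t_n$ for all $n$ axiom (O3) gives $s=\sum_{n=0}^N s_n\ll\sum_{n=0}^N t_n\le t$, whence $s\ll t$. The forward direction is the main obstacle. The naive attempt---feeding a test sequence for $t_n$ into the $n$-th slot of a sum $\sum_{k\ne n}t_k+z_m$ and invoking $s\ll t$---fails, because the chain-decomposition of such a \emph{sum} scrambles its $n$-th coordinate (the lower-index, hence larger, summands contaminate it). The fix is to build the comparison element as an \emph{honest} descending chain in $\Lambda$, so that by uniqueness its chain-decomposition is transparent. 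Concretely, after reducing via interpolation (from (O2)) to the case where the target has finite support $\le M$---replacing $t$ by an interpolant $u'$ with $s\ll u'\ll t$, $u'\in S_{\ll}$, and noting $u'_n\le t_n$---and via the continuity lemma to a test sequence $(c_m)_m\subseteq\Lambda$ increasing with $\sup_m c_m\ge u'_n$, I choose (using upward-directedness of $\Lambda$) an increasing sequence $(w_m)_m\subseteq\Lambda$ with $w_m\ge c_m$ and $w_m\ge u'_0$, and set
\[
P_m:=\underbrace{w_m+\dots+w_m}_{n}\,+\,\underbrace{c_m+\dots+c_m}_{M-n+1}.
\]
Since $w_m\ge c_m$, the displayed summands already form a descending chain in $\Lambda$, so the chain-decomposition of $P_m$ is exactly $(\underbrace{w_m,\dots,w_m}_{n},\underbrace{c_m,\dots,c_m}_{M-n+1})$, and in particular $(P_m)_n=c_m$. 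A coordinatewise check (using $\sup_m w_m\ge u'_0\ge u'_k$ for $k<n$ and $\sup_m c_m\ge u'_n\ge u'_k$ for $n\le k\le M$, together with the continuity lemma) shows $\sup_m P_m\ge u'$; hence $s\ll u'$ yields $s\le P_{m_0}$ for some $m_0$, and the comparison at level $n$ gives $s_n\le(P_{m_0})_n=c_{m_0}\le z_{m_0}$. As $(z_m)$ was arbitrary this proves $s_n\ll u'_n\le t_n$. The crux, and where I expect all the difficulty to lie, is precisely this decoupling of the $n$-th coordinate: the continuity lemma lets one test way-below inside $\Lambda$, and arranging the dominating element as a genuine descending chain (rather than an arbitrary sum) is what makes the chain-decomposition read off the correct slot.
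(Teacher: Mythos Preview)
Your argument is correct. Parts (i), (ii), and the converse direction of (iii) are handled exactly as in the paper. For the forward implication in (iii) you take a different and more careful route. The paper simply picks, for each $n$, a $\ll$-increasing sequence $(t_{n,i})_i$ with supremum $t_n$, observes that $s\ll t=\sup_i\sum_n t_{n,i}$ gives $s\le\sum_n t_{n,l}$ for some $l$, and then asserts $s_n\le t_{n,l}\ll t_n$. That last deduction tacitly treats $(t_{n,l})_n$ as the chain-decomposition of $\sum_n t_{n,l}$, which is only legitimate once $(t_{n,l})_n$ is itself a descending chain in $\Lambda$; the paper leaves this arrangement implicit. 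Your approach sidesteps the issue: the continuity lemma lets you replace an arbitrary test sequence by one drawn from $\Lambda$, and building the dominating element $P_m$ as an \emph{honest} descending chain makes its chain-decomposition transparent by construction, so the $n$-th coordinate reads off as $c_m$ directly. The price is a longer proof; the gain is that the very ``scrambling'' obstruction you diagnose---and which the paper's shortcut does not visibly address---is handled cleanly.
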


\begin{proof}
(i) Let $s\in S$ and $\lambda\in \Lambda$ be such that $s\leq \lambda$. By uniqueness of the chain-decomposition, we know that the sequence $(\lambda,0,\dots)_n$ is the chain-decomposition of $\lambda$. Consider $(s_n)_n$ to be the chain-decomposition of $s$. Since $s\leq \lambda$, we deduce that $s_0\leq \lambda$ and that $s_n=0$ for all $n\neq 0$. Thus, $s=s_0$ which proves that $s\in\Lambda$. Further, let $\lambda_1,\lambda_2\in \Lambda$. Consider $s:=\lambda_1+\lambda_2$ and let $(s_n)_n$ be the chain-decomposition of $s$. Then $s_0$ is an element of $\Lambda$ and moreover, we have $\lambda_1,\lambda_2\leq s_0$ since $\lambda_1,\lambda_2\leq s$.

(ii) Let $s\in S_\ll$. There exists $t\in S$ such that $s\ll t$. Let $(s_n)_n,(t_n)_n$ be the respective chain-decompositions of $s,t$. Observe that the sequence $(\sum\limits_{n=0}^k t_n)_k$ is $\leq$-increasing towards $t$. Hence there exists a finite stage $l\in\N$ such that $s\leq \sum\limits_{n=0}^l t_n=: t'$. By uniqueness of the chain-decomposition, we know that $(t_0,\dots,t_l,0,\dots)$ is the chain-decomposition of $t'$ from which we deduce that $s_k=0$ for any $k\geq l+1$. 

(iii) Assume that $s\ll t$.  For any $n\in\N$, consider a $\ll$-increasing sequence $(t_{n,i})_{i\in\N}$ whose supremum is $t_n$, obtained from (O2). Then $(\sum\limits_{n=0}^\infty t_{n,i})_i$ is an increasing sequence whose supremum is $t$. Thus, there exists $l\in\N$ with $s\leq \overset{\infty}{\underset{n=0}\sum} t_{n,l}$. 
We deduce that $s_n\leq t_{n,l}$ for any $n\in\N$, which gives us that $s_n\ll t_n$ for any $n\in\N$.
Conversely, assume that $s\in S_{\ll}$ and that $s_n\ll t_n$ for any $n\in \N$. Since all but finitely many $s_n$ are zero, we conclude that $s\ll t$ using axiom (O3).
\end{proof}

\subsection{The Cuntz semigroup of continuous functions over the open interval}
\label{sec:A}
In this part, we show that  $\Cu(\mathcal{C}_0(]0,1[))\simeq \Lsc(]0,1[,\overline{\N})$ is a uniformly based $\Cu$-semigroup. In order to do so, we explicitly construct uniform basis \textquoteleft from\textquoteright\ the supernatural number $2^\infty$. This will give rise to a handful of uniform bases obtained from any supernatural number  by mimicking a similar process.
 
\begin{prg}\label{prg:cover}Let $n\in\N$ and let $(x_k)_0^{2^n}$ be an equidistant partition of $]0,1[$ of size $1/2^n$. For any $k\in \{1,\dots 2^n\}$, we define the open interval $U_k:=]x_{k-1};x_k[$ of $]0,1[$. It is immediate to see that $\mathcal{U}:=\{\overline{U_k}\}_1^{2^n}$ is a (finite) closed cover of $]0,1[$.
Now define for any $n\geq 1$
\[
M_n:=\{f\in\Lsc(]0,1[,\N)\mid f_{|U_k} \text{ is constant for any }k\in \{1,\dots 2^n\}\}.
\]
\end{prg}
Trivially, $(M_n)_n$ is a $\subseteq$-increasing sequence of positively ordered monoids in $S_\ll$. (We recall that $M_0$ is fixed to $\{0\}$ by convention.) Also, observe that for any $n\in\N$ and any $g\in M_n$, the chain-decomposition of $g$ is a descending chain in $\Lambda\cap M_n$. Thus, $\Lambda$ induces a chain-generating set $\Lambda_n:=\Lambda\cap M_n$ of $M_n$. This will be used several times in what follows. 
We now have to build an order-preserving super-additive morphism $\epsilon_n:S_\ll\longrightarrow M_n$ for any $n\in\N$, satisfying (U2)-(U3)-(U4).
We will use the following lemmas:

\begin{lma}
\label{lma:ccelmnt}
Let $n\in\N$ and let $g',g\in M_n$ such that $g'\ll g$. There exists an element $h\in M_{n+1}$ such that $g'\ll h \ll g$.
\end{lma}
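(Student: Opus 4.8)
The plan is to reduce everything to the chain-decomposition machinery of \autoref{prop:chaindcp} and to build $h$ one ``level'' at a time. Since $g',g\in M_n\subseteq S_\ll$, \autoref{prop:chaindcp}(ii) provides finite chain-decompositions $g=\sum_{j}1_{V_j}$ and $g'=\sum_{j}1_{V'_j}$ with respect to the chain-generating set $\Lambda=\Lsc(]0,1[,\{0,1\})$, where $V_j:=\{t\mid g(t)>j\}$ and $V'_j:=\{t\mid g'(t)>j\}$ are descending sequences of open sets. Because $g$ and $g'$ are constant on each coarse interval $U_k$, each $V_j$ and each $V'_j$ is a union of coarse intervals, glued along those partition points $x_k$ at which the value exceeds $j$; lower-semicontinuity forces such a point into an open level set only together with its two neighbouring coarse intervals, so each level set is in fact a disjoint union of maximal open ``runs'' $]x_c,x_d[$ with $x_c,x_d$ coarse grid points. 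Recalling that $1_U\ll 1_V$ if and only if $\overline U\subseteq V$, the hypothesis $g'\ll g$ translates, via \autoref{prop:chaindcp}(iii), into $\overline{V'_j}\subseteq V_j$ for every $j$.

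Next I would construct the intermediate levels by a single fine refinement step. For each $j$ I set
\[
W_j:=\{\,t\in\,]0,1[\ \mid\ d(t,V'_j)<1/2^{n+1}\,\},
\]
the $1/2^{n+1}$-neighbourhood of $V'_j$ inside $]0,1[$. Since $V'_j$ is a union of runs $]x_c,x_d[$ with coarse (hence fine) grid endpoints, $W_j$ is the union of the slightly larger intervals $]x_c-1/2^{n+1},x_d+1/2^{n+1}[$, whose endpoints are again fine grid points; thus $1_{W_j}\in M_{n+1}$. Monotonicity of the neighbourhood operation together with $V'_0\supseteq V'_1\supseteq\cdots$ gives $W_0\supseteq W_1\supseteq\cdots$, and only finitely many $W_j$ are nonempty, so $h:=\sum_j 1_{W_j}$ is a well-defined element of $M_{n+1}$ whose chain-decomposition is precisely $(1_{W_j})_j$.

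It then remains to verify the sandwich $\overline{V'_j}\subseteq W_j\subseteq\overline{W_j}\subseteq V_j$ for each $j$. The left inclusion is immediate, since closure points of $V'_j$ lie at distance $0$ from $V'_j$. For the right inclusion, the only points of $\overline{W_j}$ not already in $\overline{V'_j}$ are the added closed half-intervals $[x_c-1/2^{n+1},x_c]$ and $[x_d,x_d+1/2^{n+1}]$ at the interior endpoints of the runs; here the crucial input is that $x_c\in\overline{V'_j}\subseteq V_j$, and since $V_j$ is open with $g$ lower-semicontinuous and constant on coarse intervals, membership $x_c\in V_j$ forces the adjoining coarse interval $]x_c-1/2^n,x_c[$ into $V_j$ as well. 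As $1/2^{n+1}$ is only half a coarse step, the added closed half-interval sits inside $V_j$; the endpoints with $x_c=0$ or $x_d=1$ require no argument, as they do not belong to $]0,1[$. Feeding the resulting inclusions $1_{V'_j}\ll 1_{W_j}\ll 1_{V_j}$ back into \autoref{prop:chaindcp}(iii) (using that $g',h\in S_\ll$ have finite chain-decompositions) then yields $g'\ll h\ll g$, as desired.

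I expect the main obstacle to be exactly the right-hand inclusion $\overline{W_j}\subseteq V_j$: it is here that one must invoke the lower-semicontinuity of $g$ to propagate membership in the open level set from a partition point to its neighbouring coarse interval, and this is precisely what guarantees that a single refinement step $n\to n+1$ leaves enough room to enlarge $V'_j$ without leaving $V_j$. The remaining points (fine-grid membership of each $1_{W_j}$, nestedness of the $W_j$, and the finiteness needed to apply \autoref{prop:chaindcp}(iii)) are routine.
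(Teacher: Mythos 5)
Your proof is correct and follows essentially the same route as the paper: reduce to indicator functions via the chain-decomposition machinery of \autoref{prop:chaindcp}, then produce at the finer partition an open set sandwiched between $\overline{V'_j}$ and $V_j$ by moving the endpoints of each run by half a coarse step. Your packaging of this as the open $1/2^{n+1}$-neighbourhood of $V'_j$ handles all connected components at once where the paper argues component by component, and it enlarges $V'_j$ outward, which is the correct direction (the set $W$ displayed in the paper's proof appears to have its indices shifted so that it shrinks $V'$ instead).
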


\begin{proof}
Let $(x_k)_0^{2^n},(y_k)_0^{2^{n+1}}$ be equidistant partitions of $]0,1[$ of size $1/2^n,1/2^{n+1}$ respectively. We consider the closed (finite) covers $\mathcal{U}:=\{\overline{U_k}\}_1^{2^n}$ and $\mathcal{W}:=\{\overline{W_k}\}_1^{2^{n+1}}$ of $]0,1[$, constructed as above.

We first assume that both $g',g$ belong to the induced chain-generating set $\Lambda_n:=\Lambda\cap M_n$ of $M_n$ and the general case will follow using the chain-decomposition properties (see \autoref{prg:chaindcp} and \autoref{prop:chaindcp}). There exist open sets $V',V$ of $]0,1[$ such that $g'=1_{V'}, g=1_V$. Furthermore, both $V,V'$ have a finite number of (open) connected components. Again, we first assume that $V,V'$ are connected open sets of $]0,1[$ and we repeat the process finitely many times to obtain the result. Observe that in this case, we have 
\[
V':=U_{l'}\cup(\bigcup\limits_{k=l'+1}^{r'-1}\overline{U_{k}})\cup U_{r'} \hspace{2cm}
V:=U_{l}\cup(\bigcup\limits_{k=l+1}^{r-1}\overline{U_{k}})\cup U_{r}
\]
for some $l < l' \leq r'< r$. 

On the other hand, observe that $W_{2k-1}\cup W_{2k}= U_k\setminus\{y_{2k-1}\}$ for any $1\leq k\leq 2^n$. Let us construct the following open set of $]0,1[$:
\[
W:=W_{2l'}\cup(\bigcup\limits_{k=l'+1}^{r'-1}\overline{U_{k}})\cup W_{2r'-1}
\]
and we define $h:=1_{W}$. By construction, $h\in\M_{n+1}$ and $g'\ll h\ll g$, which ends the proof.
\end{proof}

\begin{lma}
Let $f\in\Lsc(]0,1[,\N)$. For any $n\in\N$, the set $\{g\in M_n\mid g\ll f\}$ has a largest element, that we write $\epsilon_n(f)$. Further, $(\epsilon_n(f))_n$ is an increasing sequence whose supremum is $f$.
\end{lma}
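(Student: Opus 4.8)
The statement has two parts: first, that the set $\{g\in M_n\mid g\ll f\}$ has a largest element $\epsilon_n(f)$; second, that $(\epsilon_n(f))_n$ increases to $f$. I would tackle existence of the maximum via the chain-generating structure, and then handle the supremum claim by approximating the open level-sets of $f$ by the coarser level-sets allowed in $M_n$.

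\emph{Existence of the largest element.} The monoid $M_n$ consists of functions constant on each interval $U_k$ of the dyadic partition, and by the remark preceding the lemma, $\Lambda_n=\Lambda\cap M_n=\Lsc(]0,1[,\{0,1\})\cap M_n$ is a chain-generating set of $M_n$. I would first treat a single characteristic function $1_U\in\Lambda_n$: the condition $1_U\ll f$ says $\overline{U}\subseteq f^{-1}(]0,+\infty])$, and since $U$ is (a finite union of sets) built from the fixed intervals $\overline{U_k}$, one can take the largest such $U$ coordinatewise, i.e. the union of all the closed building blocks $\overline{U_k}$ whose union stays compactly contained in the relevant level-set. The key point is that because the family of candidate sets in $M_n$ is finite in the relevant sense (only finitely many "shapes" constant on the $U_k$ at each height), the supremum of all $g\in M_n$ with $g\ll f$ is again an element of $M_n$ and is itself $\ll f$, hence a genuine maximum. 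More cleanly, I would use \autoref{prop:chaindcp}(iii): writing $f=\sum_n 1_{V_n}$ for its chain-decomposition into open level-sets $V_n=f^{-1}(]n;+\infty])$, a function $g=\sum_n 1_{U_n}\in M_n$ satisfies $g\ll f$ iff $1_{U_k}\ll 1_{V_k}$ for each $k$ (for $g\in S_\ll$, which holds as $g\in M_n\subseteq S_\ll$). This reduces the problem to each level independently: at each height $k$, among all open $U\in\Lambda_n$ with $\overline{U}\subseteq V_k$ there is a largest one, namely the union of those closed partition-blocks $\overline{U_j}$ contained in $V_k$, opened up appropriately so that the characteristic function lands in $M_n$. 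Taking this largest $U$ at every height and summing yields $\epsilon_n(f)$, the largest element of $\{g\in M_n\mid g\ll f\}$.

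\emph{Convergence to $f$.} That $(\epsilon_n(f))_n$ is increasing follows from $M_n\subseteq M_{n+1}$ (axiom (U1)-type monotonicity already established for this family): any $g\in M_n$ with $g\ll f$ also lies in $M_{n+1}$, so $\epsilon_n(f)\leq\epsilon_{n+1}(f)$. For the supremum being $f$, I would argue level by level again. Fix a height $k$ and the open set $V_k=f^{-1}(]k;+\infty])$. As $n\to\infty$ the dyadic partition refines, so the largest union of blocks $\overline{U_j}$ compactly contained in $V_k$ exhausts $V_k$ from within: every point $t\in V_k$ eventually lies in some block whose closure is still inside $V_k$, by local compactness and the fact that the mesh $1/2^n\to 0$. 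Thus the level-$k$ part of $\epsilon_n(f)$ increases to $1_{V_k}$, and summing over the finitely many relevant heights (on $S_\ll$; in general using (O4) to pass suprema through the sum) gives $\sup_n\epsilon_n(f)=\sum_k 1_{V_k}=f$.

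\emph{Main obstacle.} The delicate step is the boundary behaviour in the refinement argument: a point $t$ of $V_k$ that sits exactly at a partition node $x_j$ need not lie in the interior of any single block, so one must be careful that the approximating sets $\overline{U}\subseteq V_k$ actually fill up $V_k$ and not merely $V_k$ minus a discrete set. This is exactly the issue that \autoref{lma:ccelmnt} is designed to circumvent — splitting a block $U_k$ into the two finer blocks $W_{2k-1},W_{2k}$ lets one slide the endpoints inward while keeping compact containment — so I would lean on the construction in that lemma's proof to show that every compactly-contained piece of $V_k$ is eventually captured. Verifying that $\epsilon_n$ is order-preserving and super-additive (needed for the ambient \autoref{dfn:unifbasedCu}) is then routine from the coordinatewise description and is not the crux here.
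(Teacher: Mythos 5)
Your proposal is correct and follows essentially the same route as the paper: reduce to indicator functions via the chain-generating set $\Lambda=\Lsc(]0,1[,\{0,1\})$ and \autoref{prop:chaindcp}, construct $\epsilon_n(1_V)$ as the indicator of the union of the closed dyadic blocks contained in $V$ (with the partition nodes filled in), and get $\sup_n\epsilon_n(f)=f$ from the mesh $1/2^n\to 0$ (density of the dyadics). The boundary issue you flag is resolved in the paper by an explicit node-filling rule in the definition of $\epsilon_n(f)$ --- a point $x$ lying on a partition node is assigned the value $1$ precisely when both adjacent blocks have closure inside $V$ --- which is the same fix you propose.
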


\begin{proof}
Let us explicitly build $\epsilon_n(f)$. Since $\Lambda:=\Lsc(]0,1[,\{0,1\})$ is a chain-generating set of $\Lsc(]0,1[,\N)$, we first prove the result for element $\Lambda$ and the general case will follow using the chain-decomposition properties (see \autoref{prg:chaindcp} and \autoref{prop:chaindcp}). Let $V\subseteq \,]0,1[$ be an open set and write $f:=1_V$. For any $n\in\N$, construct $\epsilon_n(f)$ recursively as follows:

	\vspace{0,2cm}(1) For any $1\leq k\leq 2^n$, $\left\{\begin{array}{ll}\epsilon_n(f)_{|U_k}:=1 \text{ if } \overline{U_k}\subseteq V.\\ \epsilon_n(f)_{|U_k}:=0 \text{ otherwise}. \end{array}\right.$\\
	
	\vspace{0,2cm}(2) For any $x\in \,]0,1[\setminus (\underset{k=1}{\overset{2^n}\cup}U_k)$, $\left\{\begin{array}{ll}\epsilon_n(f)(x):=1 \text{ if } \epsilon_n(f)_{|U_k}\neq 0 \text{ for all }k \text{ such that } x\in \overline{U_k}.\\ \epsilon_n(f)(x):=0 \text{ otherwise}. \end{array}\right.$\\
	
We leave to the reader to check that $\epsilon_n(f)$ is indeed the largest element of $\{g\in M_n\,| \,g\ll f\}$. Moreover, it is trivial to see that $\epsilon_n(f)\leq \epsilon_{n+1}(f)\ll f$, for any $n\in\N$. Finally, by density of $\N[\frac{1}{2}]$ in $\R$, we know that for any $x\in V$, there exists a pair $(k,n)$ of natural numbers, with $n$ big enough such that $x\in[\frac{k}{2^n};\frac{k+1}{2^n}]$ and such that $[\frac{k}{2^n};\frac{k+1}{2^n}]\subseteq V$. We hence deduce that for any $x\in V$, there exists $n\in\N$ big enough such that $\epsilon_n(f)(x)=1$ and we conclude that $\sup \epsilon_n(f)=f$.
\end{proof}

\begin{rmk}Combining the two previous lemmas, it is immediate that for any $g\in M_l$, the sequence $(\epsilon_n(g))_{n>l}$ is $\ll$-increasing (towards $g$) .\end{rmk}

\begin{lma}
\label{lma:ccelmnt2}
Let $n\in\N$. The assignment \vspace{-0,708cm}\[\begin{array}{ll}\hspace{6,45cm}\epsilon_n:\Lsc(]0,1[,\N)\longrightarrow M_n\hspace{0,5cm} \text{ is a well-defined order-}\\ \hspace{8,78cm} f\longmapsto \epsilon_n(f)\end{array}\]
 preserving super-additive morphism. Moreover, the restriction ${\epsilon_n}_{|{\underset{l<n-1}{\bigcup}M_l}}:{\underset{l<n-1}{\bigcup}M_l}\longrightarrow M_n$ is a $\CatPoM$-morphism. 
\end{lma}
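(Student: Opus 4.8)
The plan is to verify three properties of the assignment $\epsilon_n$: that it is order-preserving, that it is super-additive, and that its restriction to $\bigcup_{l<n-1}M_l$ is an honest $\CatPoM$-morphism (i.e. additive). The key observation that makes all three tractable is that $\epsilon_n(f)$ was defined in the previous lemma as the \emph{largest} element of the set $\{g\in M_n\mid g\ll f\}$. This extremal characterization, together with the chain-decomposition machinery of \autoref{prop:chaindcp}, will do most of the work, so I would avoid manipulating the explicit recursive formula (1)--(2) wherever possible.

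First I would treat order-preservation. If $f'\leq f$ in $\Lsc(]0,1[,\N)$, then any $g\in M_n$ with $g\ll f'$ also satisfies $g\ll f$; hence $\{g\in M_n\mid g\ll f'\}\subseteq\{g\in M_n\mid g\ll f\}$, and taking largest elements gives $\epsilon_n(f')\leq\epsilon_n(f)$. Next, for super-additivity I must show $\epsilon_n(g)+\epsilon_n(h)\leq\epsilon_n(g+h)$ for all $g,h$. Since $\epsilon_n(g)\ll g$ and $\epsilon_n(h)\ll h$, axiom (O3) gives $\epsilon_n(g)+\epsilon_n(h)\ll g+h$; moreover $\epsilon_n(g)+\epsilon_n(h)\in M_n$ because $M_n$ is a $\CatPoM$. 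By the maximality defining $\epsilon_n(g+h)$, this sum is dominated by $\epsilon_n(g+h)$, which is exactly super-additivity.

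The main obstacle is the last assertion: on $\bigcup_{l<n-1}M_l$ the inequality must be upgraded to equality, i.e. $\epsilon_n(g)+\epsilon_n(h)=\epsilon_n(g+h)$ for $g,h\in M_l$ with $l<n-1$. Here I expect to use \autoref{lma:ccelmnt}, or rather its consequence recorded in the \autoref{rmkCt}: for $g\in M_l$ with $l<n-1$, the value $\epsilon_n(g)$ is a genuine approximant from the next basis level and, crucially, every element of $M_l$ lies in $M_n$ so its chain-decomposition sits inside $\Lambda_n=\Lambda\cap M_n$. Concretely I would reduce to the case $g,h\in\Lambda_l$ via the chain-decomposition (using that addition respects the componentwise order by \autoref{prop:chaindcp}(iii) and that chain-decompositions of elements of $M_l$ stay in $\Lambda_l$), writing $g=1_V$, $h=1_W$ for open sets $V,W$ that are unions of at most $2^l$ basic intervals. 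The point is that for such $V$, the recursive rule (1)--(2) computing $\epsilon_n(1_V)$ commutes with the relevant set operations: because $l<n-1$ the partition at level $n$ refines that at level $l$ finely enough that the boundary corrections in step (2) do not interfere, so $\epsilon_n(1_V\!+\!1_W)=\epsilon_n(1_V)+\epsilon_n(1_W)$ can be checked intervalwise.

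I would therefore organize the proof as: (a) order-preservation via monotonicity of the defining set; (b) super-additivity via (O3) plus maximality, as above; (c) additivity of the restriction by reducing to $\Lambda_l$ through chain-decompositions and then verifying the identity pointwise on the level-$n$ partition, where the strict gap $l<n-1$ guarantees that the ``tailing'' boundary values introduced in step (2) of the construction add correctly. The delicate bookkeeping is entirely in part (c); parts (a) and (b) are formal consequences of the extremal description of $\epsilon_n$.
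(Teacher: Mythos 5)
Your proposal follows essentially the same route as the paper: order-preservation and super-additivity fall out formally from the characterization of $\epsilon_n(f)$ as the largest element of $\{g\in M_n\mid g\ll f\}$ (using (O3) and that $M_n$ is a $\CatPoM$), and additivity on $\bigcup_{l<n-1}M_l$ is proved by reducing to indicator functions via chain-decompositions and then exploiting that the level-$n$ partition refines the level-$l$ one by a factor $2^{n-l}\geq 4$, so the boundary shrinkage in step (2) cannot destroy the overlap. The one point you gloss over, and where the paper spends most of its effort, is the reduction itself: the chain-decomposition of $1_V+1_{V'}$ is $(1_{V\cup V'},1_{V\cap V'})$ rather than the pair $(1_V,1_{V'})$, so passing from the indicator case to general elements of $M_l$ requires the explicit telescoping computation with the merged decomposition of $f+f'$, not merely componentwise addition.
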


\begin{proof}
Let $n\in\N$. For $f\in\Lsc(]0,1[,\N)$, the element $\epsilon_n(f)$ is constructed as the largest element of $\{g\in M_n\,| \,g\ll f\}$ and hence, we automatically get that $\epsilon_n$ is an order-preserving super-additive morphism. 
Now we have to prove that the restriction ${\epsilon_n}_{|{\underset{l<n-1}{\bigcup}M_l}}$ is a $\CatPoM$-morphism, for any $n$. Note $\epsilon_0,\epsilon_1,\epsilon_2$ are restricted to $\{0_S\}$ and hence automatically satisfy (U2). 

Let $n> 2$ and let $l\in\N$ be such that $l<n-1$. Let $f',f\in M_l$. For the rest of the proof, let $(x_k)_0^{2^n},(y_k)_0^{2^l}$ be equidistant partitions of $]0,1[$ of size $1/2^n,1/2^l$ respectively. We consider the finite closed covers $\mathcal{U}:=\{\overline{U_k}\}_1^{2^l}$ and $\mathcal{W}:=\{\overline{W_k}\}_1^{2^n}$ of $]0,1[$, constructed as in \autoref{prg:cover}.

We first assume that both $f,f'$ belong to the induced chain-generating set $\Lambda_l:=\Lambda\cap M_l$ of $M_l$. Hence, there exist open sets $V,V'$ of $]0,1[$ such that $f=1_V,f'=1_{V'}$. Furthermore, both $V,V'$ have a finite number of (open) connected components. Thus we can also suppose, in a first time, that $V,V'$ are connected open sets of $]0,1[$ and we repeat the process finitely many times to obtain the result. 
Observe that $(V\cup V',V\cap V')$ is the chain-decomposition of $f+f'$. By the chain-decomposition properties (see \autoref{prg:chaindcp} and \autoref{prop:chaindcp}), it is easy to check that $\epsilon_n(f+f')=\epsilon_n(1_{V\cup V'})+\epsilon_n(1_{V\cap V'})$. Therefore, if $V\cap V'=\emptyset, V$ or $V'$ (in other words, either $V$ and $V'$ are disjoint sets or one contains the other) then the result is trivial. Else, we have 
\[
V:=U_{l}\cup(\bigcup\limits_{k=l+1}^{r-1}\overline{U_{k}})\cup U_{r} \hspace{2cm}V':=U_{l'}\cup(\bigcup\limits_{k=l'+1}^{r'-1}\overline{U_{k}})\cup U_{r'}
\]
for some $l< l'\leq r<  r'$. (Or $l'< l\leq r' < r'$ and the proof is similar.)

Since $\mathcal{U}$ and $\mathcal{W}$ have the same endpoints, it follows that for any $1\leq k\leq 2^l$ we can find $2^{n-l}$ open sets $W_{k,i}$ of $\mathcal{U}$ such that $W_{k,i}\subseteq U_k$ and such that $\{\overline{W_{k,i}}\}_{i=1}^{2^{n-l}}$ is a finite closed cover of $\overline{U_k}$. Thus, we can explicitly compute
\[
	\left\{\begin{array}{ll}
		\supp(\epsilon_n(f))= (U_{l}\setminus \overline{W_{l,1}})\cup(\bigcup\limits_{k=l+1}^{r-1}\overline{U_{k}})\cup (U_{r}\setminus \overline{W_{r,2^{n-l}}}). \\
		\supp(\epsilon_n(f'))= (U_{l'}\setminus \overline{W_{l',1}})\cup(\bigcup\limits_{k=l'+1}^{r'-1}\overline{U_{k}})\cup (U_{r'}\setminus \overline{W_{r',2^{n-l}}}).		
		\end{array}
	\right.
\]
And also
\[
	\left\{\begin{array}{ll}
		\supp(\epsilon_n(1_{V\cup V'}))= (U_{l}\setminus \overline{W_{l,1}})\cup(\bigcup\limits_{k=l+1}^{r'-1}\overline{U_{k}})\cup (U_{r'}\setminus \overline{W_{r',2^{n-l}}}). \\
		\supp(\epsilon_n(1_{V\cap V'}))= (U_{l'}\setminus \overline{W_{l',1}})\cup(\bigcup\limits_{k=l'+1}^{r-1}\overline{U_{k}})\cup (U_{r}\setminus \overline{W_{r,2^{n-l}}}).		
		\end{array}
	\right.
\] 
On the other hand, we know that $n-l\geq 2$ and hence, each $U_k$ contains at least $4$ open sets $W_{k,i}$. This ensures that $(U_{r}\setminus \overline{W_{r,2^{n-l}}})$ overlaps with $(U_{l'}\setminus \overline{W_{l',1}})$, in the sense that $(U_{r}\setminus \overline{W_{r,2^{n-l}}})\cap(U_{l'}\setminus \overline{W_{l',1}})\neq\emptyset$. A fortiori, $\supp(\epsilon_n(f))\cap\supp(\epsilon_n(f'))\neq \emptyset$. (Even in the particular case where $l'=r$.) As a consequence, we deduce that $\supp(\epsilon_n(f))\cap\supp(\epsilon_n(f'))=\supp(\epsilon_n(1_{V\cap V'}))$ and that $\supp(\epsilon_n(f))\cup\supp(\epsilon_n(f'))=\supp(\epsilon_n(1_{V\cup V'}))$. Finally, we know that
\begin{align*}
\epsilon_n(f)+\epsilon(f')&=1_{\supp(\epsilon_n(f))}+1_{\supp(\epsilon_n(f'))}\\
&=1_{\supp(\epsilon_n(f))\cup\supp(\epsilon_n(f'))}+1_{\supp(\epsilon_n(f))\cap\supp(\epsilon_n(f'))}\\
&=1_{\supp(\epsilon_n(1_{V\cup V'}))}+1_{\supp(\epsilon_n(1_{V\cap V'}))}\\
&=\epsilon_n(f+f').
\end{align*}
We conclude that for any $l<n-1$ and any $f,f'\in\Lambda_l$, we have $\epsilon_n(f+f')=\epsilon_n(f)+\epsilon_n(f')$. 
	
We now assume that $f'\in\Lambda_l$ and that $f\in\M_l$. Let $(V_j)_j$ be the chain-decomposition of $f$ and let $V':=\supp f'$. It can be checked that $(V_0\cup V',(V_j\cup(V_{j-1}\cap V'))_{j\geq 1})$ is the chain-decomposition of $f+f'$. Again, by the chain-decomposition properties, we obtain $\epsilon_n(f+f')=\epsilon_n(1_{V_0\cup V'})+\sum\limits_{j\geq 1}\epsilon_n(1_{V_j\cup(V_{j-1}\cap V')})$. 
Let us use the result we have just proven to compute
\begin{align*}
\hspace{0,5cm}\epsilon_n(f)+\epsilon_n(f')&=\sum\limits_{j}\epsilon_n(1_{V_j})+\epsilon_n(1_{V'})\\
						&= \epsilon_n(1_{V_0}+1_{V'})+\sum\limits_{j\geq 1}\epsilon_n(1_{V_j})\\
&=\epsilon_n(1_{V_0\cup V'})+\epsilon_n(1_{V_0\cap V'})+\sum\limits_{j\geq 1}\epsilon_n(1_{V_j})\\
&=\epsilon_n(1_{V_0\cup V'})+\epsilon_n(1_{V_0\cap V'}+1_{V_1})+\sum\limits_{j\geq 2}\epsilon_n(1_{V_j})\\
&=\epsilon_n(1_{V_0\cup V'})+\epsilon_n(1_{(V_0\cap V')\cup V_1})+\epsilon_n(1_{V_1\cap V'})+\sum\limits_{j\geq 2}\epsilon_n(1_{V_j})\\
&=\epsilon_n(1_{V_0\cup V'})+\epsilon_n(1_{(V_0\cap V')\cup V_1})+\epsilon_n(1_{V_1\cap V'}+1_{V_2})+\sum\limits_{j\geq 3}\epsilon_n(1_{V_j})\\
&=\epsilon_n(1_{V_0\cup V'})+\epsilon_n(1_{(V_0\cap V')\cup V_1})+\epsilon_n(1_{(V_1\cap V')\cup V_2})+\epsilon_n(1_{V_2\cap V'})+\sum\limits_{j\geq 3}\epsilon_n(1_{V_j}).
\end{align*}

We recall that the chain-decomposition of $f$ is finite and hence, there exists $j\in\N$ such that $1_{V_k\cap V'}=0$ for any $k\geq j$. Repeating the process finitely many times, we get that
\[
\epsilon_n(f)+\epsilon_n(f')=\epsilon_n(1_{V_0\cup V'})+\sum\limits_{j\geq 1}\epsilon_n(1_{V_j\cup(V_{j-1}\cap V')}).
\]
Thus $\epsilon_n(f)+\epsilon_n(f')=\epsilon_n(f+f')$ for any $f\in\bigcup\limits_{l<n-1}M_l$, which ends the proof.
\end{proof}
\begin{cor}
For any $n\in\N$, consider $M_n$ constructed in \autoref{prg:cover}. Let $\epsilon_n: S_{\ll}\longrightarrow M_n$ be the assignment that sends $s$ to $\max\limits_{g\in M_n}\{ g\ll f\}$.

Then $\mathcal{B}_{2^\infty}:=(M_n,\epsilon_n)_n$ is a uniform basis of $\Lsc(]0,1[,\overline{\N})$.
\end{cor}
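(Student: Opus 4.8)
The plan is to verify directly that $\mathcal{B}_{2^\infty}=(M_n,\epsilon_n)_n$ satisfies every clause of \autoref{dfn:unifbasedCu}, since essentially all of the substantive work has already been carried out in the preceding lemmas and what remains is to match each requirement to the statement that supplies it. First I would recall that the paragraph following \autoref{prg:cover} already records that each $M_n$ is a $\CatPoM$ contained in $S_\ll$ and that $(M_n)_n$ is $\subseteq$-increasing; the latter is precisely axiom (U1), with the convention $M_{-2}=M_{-1}=M_0=\{0_S\}$ built into the construction. Since $\epsilon_n$ takes values in $M_n$ by definition, the map $\epsilon_n\colon S_\ll\longrightarrow M_n$ is at least well defined as a map of sets, so the standing data of a uniform basis are in place.

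Next I would read off the remaining properties one axiom at a time. The two structural requirements on $\epsilon_n$, namely that it be an order-preserving super-additive morphism and that its restriction to $\bigcup_{l<n-1}M_l$ be a genuine $\CatPoM$-morphism (axiom (U2)), are exactly the two assertions of \autoref{lma:ccelmnt2}. Axiom (U3) is supplied by the lemma showing that $\{g\in M_n\mid g\ll f\}$ has a largest element $\epsilon_n(f)$ and that $(\epsilon_n(f))_n$ is increasing with supremum $f$; since $\epsilon_n(f)\ll f$ by construction, this increasing sequence lies in $f_\ll$, which is exactly what (U3) demands. Finally, axiom (U4) is the content of the Remark that combines \autoref{lma:ccelmnt} with the largest-element lemma: for $g\in M_l$ the sequence $(\epsilon_n(g))_{n>l}$ is $\ll$-increasing towards $g$. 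Thus each axiom is discharged by citing one previously established result.

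The only genuine subtlety I would flag is a matter of domains. The lemmas are phrased for $f\in\Lsc(]0,1[,\N)$, whereas \autoref{dfn:unifbasedCu} asks for $\epsilon_n$ defined on $S_\ll$. I would therefore first identify $S_\ll$ inside $\Lsc(]0,1[,\overline{\N})$, observing that an element way-below another must be finite-valued (the value $\infty$ and non-compactly-supported level sets cannot be captured at a finite stage of an increasing sequence), so that $S_\ll\subseteq\Lsc(]0,1[,\N)$ and the lemmas apply verbatim to every $s\in S_\ll$. With that inclusion noted, no further computation is needed. I would stress that the real obstacle was never in this assembly step but inside \autoref{lma:ccelmnt2}, where the super-additive map $\epsilon_n$ had to be upgraded to a genuinely additive one on the low-index monoids via the explicit overlap argument on the refined dyadic cover; granting that, the present corollary is immediate.
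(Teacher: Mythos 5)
Your proposal is correct and matches the paper's intent exactly: the corollary is stated without proof precisely because it is the assembly of \autoref{lma:ccelmnt2}, the largest-element lemma, the subsequent Remark, and the observations following \autoref{prg:cover}, which is what you carry out axiom by axiom. Your additional remark that $S_\ll\subseteq\Lsc(]0,1[,\N)$, so that the lemmas (stated on $\Lsc(]0,1[,\N)$) indeed cover all of $S_\ll$, is a worthwhile clarification that the paper leaves implicit.
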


\begin{prg}
\label{prg:qbases}$\textbf{Uniform bases of size q}.$ We have constructed a uniform basis of $\Lsc(]0,1[,\overline{\N})$. The choice of an equidistant partition of size $1/2^n$ in the construction of $M_n$ (see \autoref{prg:cover}) is arbitrary and one can construct similarly other uniform bases using any supernatural number.
Let $(p_i)_i$ be a sequence of prime numbers and consider the supernatural number $q:=\prod\limits_{i=0}^\infty p_i$. For any $n\in\N$, we define $q_n:=\prod\limits_{i=0}^n p_i$ and we consider an equidistant partition $(x_{k,n})_{k=0}^{q_n}$ of $]0,1[$ of size $1/q_n$ that induces a closed (finite) cover $\mathcal{U}_n:=\{\overline{U_{k,n}}\}_{k=1}^{q_n}$ in the same fashion as in \autoref{prg:cover}, where $U_{k,n}:=]x_{k-1,n};x_{k,n}[$. Similarly, define for any $n\in\N$
\[
\begin{array}{ll} 
\left\{
\begin{array}{ll} 
M_n:=\{f\in\Lsc(]0,1[,\N)\mid f_{|U_{k,n}} \text{ is constant for any }k\in \{1,\dots,q_n\}\}.\\
\epsilon_n: \Lsc(]0,1[,\N)\longrightarrow M_n
 \end{array}
 \right.\\
  \hspace{2,8cm}f\longmapsto\max\limits_{g\in M_n}\{ g\ll f\}
   \end{array}
\]
Then $\mathcal{B}_q:=(M_n,\epsilon_n)_n$ is a uniform basis of $\Lsc(]0,1[,\overline{\N})$ that we refer to as \emph{the uniform basis of size $q$}.
\end{prg}
\subsection{The Cuntz semigroup of continuous functions over one-dimensional \texorpdfstring{$\CW$}{CW} complexes}

This part of the paper aims to generalize the previous process to a larger class of $\Cu$-semigroups. More precisely, we extend the construction of a uniform basis of size $q$, to any $\Cu$-semigroup of the form $\Lsc(X,\overline{\N}^r)$, where $X$ is a compact (Hausdorff) one-dimensional $\CW$ complex (in other word, a finite graph) and $r<\infty$. As a result, we obtain that any concrete Cuntz semigroup coming from $\mathcal{C}(X)\otimes A$, where $X$ is a finite graph and $A$ is a finite-dimensional $\CatCa$-algebra, is uniformly based.

Let us first recall some facts about $\CW$-complexes and we refer the reader to \cite{FP93} for more details. A $\CW$-complex of dimension $n$ is obtained from \textquoteleft gluing\textquoteright\ n-dimensional balls together in a specific way, using $\CW$-complexes of lower dimensions. In the particular case of $n\leq 1$, these topological spaces are nicely characterized: A one-dimensional $\CW$-complex $X$ can be seen as a graph $\mathfrak{c}_X$ where the $0$-dimensional cells are the vertices and the $1$-dimensional cells are the edges. (We precise that in the sequel, we distinguish open and closed cells.) Note that a one-dimensional $\CW$-complex $X$ is compact if and only if its associated graph $\mathfrak{c}_X$ is finite.

\begin{thm}
\label{thm:cpctgraph}
Let $X$ be a compact one-dimensional $\CW$ complex. Then $\Lsc(X,\overline{\N})$ is a uniformly based $\Cu$-semigroup.
\end{thm}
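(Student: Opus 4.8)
The plan is to generalize verbatim the construction carried out for $\Lsc(]0,1[,\overline{\N})$ in \autoref{sec:A}, replacing the equidistant partitions of the interval by simultaneous subdivisions of all edges of the finite graph $\mathfrak{c}_X$. Fix a supernatural number $q=\prod_i p_i$ and set $q_n:=\prod_{i=0}^n p_i$, as in \autoref{prg:qbases}. Subdividing each closed $1$-cell of $X$ into $q_n$ equal subintervals, while keeping the original vertices and promoting the new subdivision points to vertices, yields a finite refined $\CW$-structure on $X$; I denote its open $1$-cells by $\{U_{k,n}\}_k$, whose closures form a finite closed cover of $X$ exactly as in \autoref{prg:cover}. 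I then set
\[
M_n:=\{f\in\Lsc(X,\N)\mid f_{|U_{k,n}}\text{ is constant for every }k\},
\]
with $M_0:=\{0\}$, and define $\epsilon_n:S_\ll\longrightarrow M_n$ by $\epsilon_n(f):=\max\{g\in M_n\mid g\ll f\}$. Since $X$ has covering dimension at most one, the earlier Proposition (stated for any such $X$) shows that $\Lambda:=\Lsc(X,\{0,1\})$ is a chain-generating set of $\Lsc(X,\overline{\N})$, and $\Lambda\cap M_n$ is one of $M_n$. Using the chain-decomposition properties of \autoref{prg:chaindcp} and \autoref{prop:chaindcp}, each verification below reduces to indicator functions $1_V$ of open subsets $V\subseteq X$ and, since such $V$ has finitely many connected components, further to the case where $V$ is connected.

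With this setup, axioms (U1), (U3) and (U4) of \autoref{dfn:unifbasedCu} transfer with no essential change. Indeed, $(M_n)_n$ is $\subseteq$-increasing because each subdivision refines the previous one, so that $M_n$ is a $\CatPoM$ contained in $S_\ll$ and $\epsilon_n$ is an order-preserving super-additive morphism for the same formal reason as in \autoref{lma:ccelmnt2} (it is defined as the largest element below $f$). The analogue of \autoref{lma:ccelmnt} (existence of an intermediate cell function witnessing $g'\ll h\ll g$) and the analogue of the lemma producing the largest element $\epsilon_n(f)$ with $\sup_n\epsilon_n(f)=f$ are both local statements on each edge, so they carry over, the supremum identity following from density of the subdivision points in each edge. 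This gives (U3), and the associated remark gives (U4).

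The substantive point, exactly as for $]0,1[$, is (U2): that the restriction of $\epsilon_n$ to $\bigcup_{l<n-1}M_l$ is additive, i.e.\ $\epsilon_n(f+f')=\epsilon_n(f)+\epsilon_n(f')$ for $f,f'\in M_l$ with $l<n-1$. Following the scheme of \autoref{lma:ccelmnt2}, I reduce to $f=1_V$, $f'=1_{V'}$ with $V,V'$ connected, and compute the supports of $\epsilon_n(f)$, $\epsilon_n(f')$, $\epsilon_n(1_{V\cup V'})$ and $\epsilon_n(1_{V\cap V'})$ cell by cell, using that $\epsilon_n(1_V)$ equals $1$ on a cell $U$ precisely when $\overline{U}\subseteq V$, and equals $1$ at a vertex $v$ precisely when $v\in V$ and every cell incident to $v$ lies in $\supp(\epsilon_n(1_V))$ (this being forced by lower semicontinuity together with maximality). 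Provided one establishes
\[
\supp(\epsilon_n(f))\cap\supp(\epsilon_n(f'))=\supp(\epsilon_n(1_{V\cap V'})),\qquad \supp(\epsilon_n(f))\cup\supp(\epsilon_n(f'))=\supp(\epsilon_n(1_{V\cup V'})),
\]
additivity follows from $1_A+1_B=1_{A\cup B}+1_{A\cap B}$ exactly as in \autoref{lma:ccelmnt2}.

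I expect the only genuine difficulty to be the vertex bookkeeping in this last step. The new phenomenon relative to the interval is that the frontier of a connected open subset of a graph consists not only of points interior to edges, handled precisely as before, but also of vertices of $\mathfrak{c}_X$, possibly of degree $\geq 3$; around such a branch point one must enumerate how $V$ and $V'$ can meet and check that the one free subdivision level guaranteed by $n-l\geq 2$ again forces the supports to overlap in the correct pattern near $v$ (ensuring, in particular, $\supp(\epsilon_n(f))\cap\supp(\epsilon_n(f'))\neq\emptyset$ when $V\cap V'\neq\emptyset$ meets a neighborhood of $v$). Away from vertices the computation is identical to the interval case. Once (U1)--(U4) are verified, $(M_n,\epsilon_n)_n$ is a uniform basis, and hence $\Lsc(X,\overline{\N})$ is uniformly based.
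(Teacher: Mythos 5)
Your proposal follows essentially the same route as the paper's own proof: subdivide each closed $1$-cell into $q_n$ equal pieces to get a finite closed cover, take $M_n$ to be the lower semicontinuous functions constant on the open subcells, define $\epsilon_n(f)$ as the largest element of $M_n$ way-below $f$ (with the same two-step prescription on cells and on vertices), and reduce all verifications to connected indicator functions via the chain-generating set $\Lsc(X,\{0,1\})$. The paper likewise leaves the final checks to ``similar arguments as for the open interval,'' so your explicit flagging of the branch-point bookkeeping for (U2) is, if anything, slightly more careful than the original.
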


\begin{proof}
Let $X$ be a compact one-dimensional $\CW$-complex. Let us enumerate the cells of $X$ as follows: we denote the $j$-th (clopen) $0$-cell by $c^0_j$, the $j$-th (open) $1$-cell by $c^1_j$ and we define $\mathfrak{c}:=(\{c^0_j\}_1^{l'},\{c^1_j\}_1^l)$ to be the (unique) finite graph associated to $X$. (Observe that a $0$-cell is topologically isomorphic to a single point while an open $1$-cell is topologically isomorphic to the open interval and also that both $l,l'<\infty$.) We recall that $F\subseteq X$ is a closed set if and only if for any $1$-cell $c^1_j$ of $X$, the intersection $F\cap \overline{c^1_j}$ is a closed set of $\overline{c^1_j}$. 

We construct a uniform basis for the case where $X$ is a finite connected graph (in other words, $X$ does not contain any isolated $0$-cells) and the general case will follow. (It is easily seen that $\Lsc(X,\overline{\N})\simeq \underset{i\in I}{\oplus}\Lsc(X_i,\overline{\N})\oplus\overline{\N}^r$ where $I$ is a finite index corresponding to the number of non-trivial connected components $X_i$ in $X$ and $r\in\N$ is the finite number of isolated points in $X$.) 

Let $n\in\N$. First, we construct a monoid $M_n\subseteq \Lsc(X,\N)$ as follows: For each (open) $1$-cell $c^1_j$, consider an equidistant partition of size $1/2^n$, and consider the open sets $\{U_k^j\}_{k=1}^{2^n}$ as in \autoref{prg:cover}. Observe that $\mathcal{U}^j:=\{\overline{U^j_k}\}_{k=1}^{2^n}$ is a closed cover of the closed $1$-cell $\overline{c^1_j}$. Therefore, we obtain a finite closed cover $\mathcal{U}:=\{\{\overline{U^1_k}\}_{k=1}^{2^n},\dots,\{\overline{U^l_k}\}_{k=1}^{2^n}\}$ of $X$. We now consider
\[M_n:=\{f\in\Lsc(X,\N)\mid f_{|U_k^j} \text{ is constant for any } U^j_k\in\mathcal{U}\}.\]
Next, we have to construct an order-preserving super-additive morphism $\epsilon_n:\Lsc(X,\N)\longrightarrow M_n$ satisfying (U2)-(U3)-(U4). As before, we first consider an indicator map $f:=1_V$, where $V$ is an open set of $X$. By the lower-semicontinuity of $f$, we deduce that for any $0$-cell $c_j^0=\{x_j\}$ that belongs to $V$, (that is, $f(x_j)\neq 0$) then there exists an open neighborhood $O_{x_j}$ of $x_j$ such that $O_{x_j}\subset V$. In particular, for any $1$-cell $c^1_k$ which is \textquoteleft glued\textquoteright\ to $\{x_j\}$, in the sense that its closure $\overline{c^1_k}$ contains $x_j$, then there exists an open neighborhood of $x_j$ in $\overline{c^1_k}$ that is contained $V$. Therefore, we can construct $\max\limits_{g\in M_n}\{ g\ll f\}$, in the same fashion as for the open interval, as follows: 

\vspace{0,1cm}(1) For any $1\leq k\leq 2^n$ and any $1\leq j \leq l$, $\left\{\begin{array}{ll}\epsilon_n(f)_{|U^j_k}:=1 \text{ if } \overline{U^j_k}\subseteq V.\\ \epsilon_n(f)_{|U^j_k}:=0 \text{ otherwise}. \end{array}\right.$\\

\vspace{0,15cm}(2) For any $x\in X\setminus (\underset{j=1}{\overset{l}\cup}\underset{k=1}{\overset{2^n}\cup}U^j_k)$, $\left\{\begin{array}{ll}\epsilon_n(f)(x):=1 \text{ if } \epsilon_n(f)_{|U_k}\neq 0 \text{ for all }j,k \text{ such that } x\in \overline{U^j_k}.\\ \epsilon_n(f)(x):=0 \text{ otherwise}. \end{array}\right.$\\
	
Again, using the chain-decomposition properties (see \autoref{prg:chaindcp} and \autoref{prop:chaindcp}) and similar arguments as for the open interval, one can check that we can extend the above construction to any $f\in\Lsc(X,\N)$. Also that $\epsilon_n:S_\ll\longrightarrow M_n$ is an order-preserving super-additive morphism that has the required properties.
\end{proof}

As for the open interval, we can replicate the above construction using any other supernatural number $q:=\prod\limits_{i=0}^\infty p_i$, in order to obtain a uniform basis $\mathcal{B}_q$ of $\Lsc(X,\overline{\N})$ associated to $q$, where $X$ is a compact one-dimensional $\CW$ complex. (See \autoref{prg:qbases}.) We enunciate the result in the following corollary, where we also allow the codomain to be any simplicial $\Cu$-semigroup:

\begin{cor}
\label{cor:qbases}
Let $X$ be a compact one-dimensional $\CW$ complex and let $A$ be a finite dimensional $\CatCa$-algebra. Then $S:=\Cu(\mathcal{C}(X)\otimes A)$ is a  uniformly based $\Cu$-semigroup. More particularly, $S$ has a uniform basis $\mathcal{B}_q$ of size $q$, for any supernatural number $q$.

In particular, the Cuntz semigroups of $\mathcal{C}([0,1])$ and $\mathcal{C}(\T)$ both have uniform bases.
\end{cor}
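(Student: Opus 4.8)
The plan is to reduce the statement to \autoref{thm:cpctgraph} through the structure theory of $\Cu(\mathcal{C}(X)\otimes A)$, and then to assemble the uniform basis using \autoref{prop:suma}. The analytic content has already been established in \autoref{thm:cpctgraph}, so the corollary amounts to identifying $S$ as a finite direct sum of copies of $\Lsc(X,\overline{\N})$ and invoking the earlier results.

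First I would record that a finite dimensional $\CatCa$-algebra $A$ meets the hypotheses of the structure result on lower-semicontinuous functions recalled in the preliminaries (the result of \cite{APS11}): writing $A\simeq\bigoplus_{i=1}^r \mathrm{M}_{k_i}(\mathbb{C})$, such an $A$ is separable and of stable rank one, and every ideal $I$ of $A$ is again finite dimensional, so $\K_1(I)=0$. Since $X$ is compact we have $\mathcal{C}_0(X)=\mathcal{C}(X)$, whence $\Cu(\mathcal{C}(X)\otimes A)\simeq \Lsc(X,\Cu(A))$. As $A$ is finite dimensional, its Cuntz semigroup is simplicial, i.e.\ $\Cu(A)\simeq \overline{\N}^r$ where $r$ is the number of simple summands, so $S\simeq \Lsc(X,\overline{\N}^r)$.

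Next I would observe that a map $f\colon X\longrightarrow \overline{\N}^r$ is lower-semicontinuous if and only if each of its $r$ coordinate functions is, and that both the $\CatPoM$-operations and the compact-containment relation on $\overline{\N}^r$ are computed componentwise; this yields a $\Cu$-iso\-morphism $\Lsc(X,\overline{\N}^r)\simeq \bigoplus_{i=1}^r \Lsc(X,\overline{\N})$. By \autoref{thm:cpctgraph}, together with the subsequent remark that the construction can be replicated from any supernatural number $q$ (see \autoref{prg:qbases}), each summand $\Lsc(X,\overline{\N})$ carries a uniform basis $\mathcal{B}_q$ of size $q$. Applying \autoref{prop:suma} iteratively, the concatenation of these bases is a uniform basis of the direct sum; since each factor is of size $q$, the concatenated family is again of size $q$. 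Transporting it along the isomorphism $S\simeq\bigoplus_{i=1}^r\Lsc(X,\overline{\N})$ equips $S$ with a uniform basis $\mathcal{B}_q$ of size $q$. The particular cases then follow by taking $A=\mathbb{C}$ (so $r=1$): both $[0,1]$ and $\T$ are compact one-dimensional $\CW$ complexes, so $\Cu(\mathcal{C}([0,1]))$ and $\Cu(\mathcal{C}(\T))$ are uniformly based.

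I do not expect a genuine obstacle here, as the hard construction lies in \autoref{thm:cpctgraph}. The only points requiring care are the verification that finite dimensional $A$ satisfies the stable-rank-one and vanishing-$\K_1$-of-ideals hypotheses of the structure theorem, and the componentwise identification of $\Lsc(X,\overline{\N}^r)$ with a finite direct sum of copies of $\Lsc(X,\overline{\N})$; both are routine.
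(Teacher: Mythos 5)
Your proposal is correct and follows essentially the same route as the paper: the paper's proof is precisely the observation that $S\simeq \Lsc(X,\overline{\N}^r)\simeq\bigoplus_{r}\Lsc(X,\overline{\N})$ followed by an appeal to \autoref{prop:suma}, with \autoref{thm:cpctgraph} and \autoref{prg:qbases} supplying the uniform basis of size $q$ on each summand. You simply spell out the routine verifications (the hypotheses of the \cite{APS11} structure theorem for finite dimensional $A$ and the componentwise identification) that the paper leaves implicit.
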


\begin{proof}
Observe that we naturally have $S\simeq \Lsc(X,\overline{\N}^r)\simeq\underset{r}{\oplus}\Lsc(X,\overline{\N})$. The corollary follows by \autoref{prop:suma}.
\end{proof}

\subsection{The Cuntz semigroup of one-dimensional \texorpdfstring{$\NCCW$}{NCCW} complexes}
We end the example section by studying the Cuntz semigroup of one-dimensional $\NCCW$ complexes. A one-dimensional $\NCCW$ complex is constructed as a pullback of a finite dimensional $\CatCa$-algebra $E$ with $\mathcal{C}([0,1])\otimes F$ (where $F$ is also a finite dimensional $\CatCa$-algebra) which dictates the values at endpoints $0$ and $1$. Inductive limits of these pullbacks is a rather large class within the $\CatCa$-algebras of stable rank one that includes for instance $\AF, \AI$ and $\A\!\T$ algebras. We mention that in \cite{R12} Robert has been able to classify all the unital one-dimensional $\NCCW$ complexes with trivial $\K_1$-group and inductive limits of such building blocks by means of the functor $\Cu$. 

Here, we aim to prove that the Cuntz semigroup of a one-dimensional $\NCCW$ complex, under a suitable hypothesis on the morphisms defining the pullback, has a uniform basis. The strategy adopted is to \textquoteleft extract\textquoteright\ such a uniform basis by using any uniform basis that we have built for the interval. This process involves the notion of \emph{$\Cu$-subsemigroups} that we recall now.\\

\vspace{-0,2cm}$\hspace{-0,34cm}\bullet\,\,\Cu\textbf{- subsemigroups and compact-containment relation}.$
Let $S$ be a $\Cu$-semigroup. We recall a $\Cu$-ideal of $S$ consists of a submonoid $I$ that is order-hereditary and closed under suprema of increasing sequences. We studying substructures of a $\Cu$-semigroup, it is often convenient to consider $\Cu$-ideals. For instance, $\Cu$-ideals allow the construction of quotients and they preserve the compact-containment relation, in the sense that the canonical inclusion $i:I\lhook\joinrel\longrightarrow S$ of an ideal $I$ into $S$ is a $\Cu$-morphism. In particular, we have $I_\ll\subseteq S_\ll\cap I$. 
Nevertheless, it is not true that the Cuntz semigroup of a one-dimensional $\NCCW$ complex can be seen as an ideal of $\Lsc([0,1],\overline{\N}^r)$ since it might not be order-hereditary. We hence have to work with \emph{$\Cu$-subsemigroups}, a weaker substructure introduced in \cite{TV21}.

\begin{dfn} (\cite[Definition 4.1]{TV21})
\label{dfn:subsemigp}
Let $S$ be a $\Cu$-semigroup. We say that a subset $S'\subseteq S$ is a \emph{$\Cu$-subsemigroup of $S$} if, $S'$ is a $\Cu$-semigroup and the canonical inclusion $i:S'\lhook\joinrel\longrightarrow S$ is a $\Cu$-semigroup. 
\end{dfn}

\begin{rmk}
One might have tried to define a $\Cu$-subsemigroup as a subset which is itself a $\Cu$-semigroup. However, one has to be cautious and notice that the compact-containment relation can be altered. Indeed, it could happen that $S'_\ll\nsubseteq S_\ll\cap S'$. (Therefore, two elements could compare for $\ll$ in $S'$ but not in $S$.) For instance, set  $S:= \overline{\N}$ and $S':=\{0,\infty\}$. Then $ S'_\ll=\{0,\infty\}$ and $S_\ll\cap S'=\{0\}$. Thus $S'_\ll\supsetneq S_\ll\cap S'$.

Also, one could consider submonoids that are closed under suprema of increasing sequences and $\ll$-hereditary, in the sense that if $x\ll y$ with $x\in S$ and $y\in S'$, then $x\in S'$. This notion seems stronger than the one define in \autoref{dfn:subsemigp} (and weaker than the notion of ideal) but does not seem relevant for our case.
\end{rmk}

The next proposition shows that any $\Cu$-subsemigroup $S'$ of a uniformly based $\Cu$-semigroup $S$, under suitable stability condition, also admits a uniform basis that can be obtained by \textquoteleft restricting\textquoteright\ the uniform basis of $S$.

\begin{prop}
\label{prop:llstability}
Let $S$ be a uniformly based $\Cu$-semigroup with uniform basis $\mathcal{B}:=(M_n,\epsilon_n)_{n}$. Let $S'\subseteq S$ be a $\Cu$-subsemigroup of $S$. Assume that $S'$ is $\mathcal{B}$-stable, in the sense that $\epsilon_n(S'_\ll)\subseteq S'_\ll$ for any $n\in\N$. 
Then $(\epsilon_n(S'_\ll),{\epsilon_n}_{\mid S'_\ll})_{n}$ is a uniform basis of $S'$. 
\end{prop}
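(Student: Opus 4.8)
The plan is to verify directly that the family $(M'_n,\epsilon'_n)_n:=(\epsilon_n(S'_\ll),{\epsilon_n}_{\mid S'_\ll})_n$ satisfies the four axioms of \autoref{dfn:unifbasedCu}, now relative to $S'$. Two facts will carry essentially the whole argument. First, since $S'$ is a $\Cu$-subsemigroup, the canonical inclusion $i\colon S'\hookrightarrow S$ is a $\Cu$-morphism; being the restriction of the order it is an order-embedding, and as a $\Cu$-morphism it preserves suprema of increasing sequences and preserves $\ll$, so that $S'_\ll\subseteq S_\ll$ and $\epsilon'_n$ is well defined. Second, the $\mathcal B$-stability hypothesis $\epsilon_n(S'_\ll)\subseteq S'_\ll$ guarantees that $M'_n\subseteq S'_\ll$ and that every approximant stays inside $S'$.

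The key technical device I would isolate first is an \emph{upgrading lemma}: for $a,b\in S'$ with $a\ll b$ in $S$, one has $a\ll b$ in $S'$. Indeed, given any increasing sequence $(z_k)_k$ in $S'$ with $\sup_k z_k\ge b$ computed in $S'$, applying $i$ and using that it preserves suprema and order gives $\sup_k z_k\ge b$ in $S$; since $a\ll b$ in $S$ there is $k$ with $z_k\ge a$, whence $a\ll b$ in $S'$. Symmetrically, suprema of increasing sequences lying in $S'$ agree whether computed in $S'$ or in $S$. These two observations let me transport every $\ll$- and supremum-statement about $\mathcal B$ from $S$ to $S'$.

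With this in hand, the axioms (U2), (U3), (U4) are immediate. For (U2), note $\bigcup_{l<n-1}M'_l\subseteq\bigcup_{l<n-1}M_l$, on which $\epsilon_n$ is additive by the corresponding axiom for $\mathcal B$, so its restriction to the smaller set is still a $\CatPoM$-morphism; order-preservation and super-additivity of $\epsilon'_n$ are inherited verbatim from $\epsilon_n$. For (U3), fix $s\in S'_\ll$: the sequence $(\epsilon_n(s))_n$ is increasing and, by stability, lies in $S'_\ll$; each term satisfies $\epsilon_n(s)\ll s$ in $S$ and hence, by the upgrading lemma, $\epsilon_n(s)\ll s$ in $S'$, i.e. $\epsilon_n(s)\in s_\ll$; finally $\sup_n\epsilon_n(s)=s$ in $S$ transfers to a supremum in $S'$ since $i$ is an injective sup-preserving map. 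Axiom (U4) is handled identically: for $s\in M'_l\subseteq M_l$ the chain $(\epsilon_m(s))_{m>l}$ is $\ll$-increasing in $S$ by (U4) for $\mathcal B$, lies in $S'$ by stability, and the upgrading lemma promotes each relation to $\ll$ in $S'$.

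The part I expect to be the genuine obstacle is showing that each $M'_n=\epsilon_n(S'_\ll)$ is a $\CatPoM$ and that (U1) holds, i.e. $M'_n\subseteq M'_{n+1}$, because these statements concern the \emph{image} of $\epsilon_n$ rather than the ambient monoids $M_n$. The natural route is to pin down $\epsilon_n(S'_\ll)$ as $M_n\cap S'_\ll$: the inclusion $\epsilon_n(S'_\ll)\subseteq M_n\cap S'_\ll$ is immediate from $\epsilon_n(S_\ll)\subseteq M_n$ together with stability, and for the reverse inclusion I would use that $\epsilon_n$ is the maximal-below approximation from $M_n$ (the feature shared by all the concrete uniform bases to which this proposition is applied), combined with stability, to realize any $x\in M_n\cap S'_\ll$ as $\epsilon_n(u)$ for a suitable $u\in S'_\ll$. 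Once this identification is secured, both claims are formal: $M_n\cap S'_\ll$ is a submonoid of $S'$ as an intersection of the submonoids $M_n$ and $S'_\ll$, and $M_n\subseteq M_{n+1}$ forces $M_n\cap S'_\ll\subseteq M_{n+1}\cap S'_\ll$, which is precisely (U1). Verifying the reverse inclusion $M_n\cap S'_\ll\subseteq\epsilon_n(S'_\ll)$ is the delicate step, and it is the place where the interplay between $\mathcal B$-stability and the explicit best-approximation nature of $\epsilon_n$ must be used carefully.
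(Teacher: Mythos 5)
Your overall strategy is the same one the paper intends: verify (U1)--(U4) directly for the restricted family, using that the inclusion $i\colon S'\hookrightarrow S$ is an order-embedding which preserves $\ll$ and suprema of increasing sequences, together with $\mathcal{B}$-stability. (The paper's own proof consists of a single sentence leaving exactly this verification to the reader, so your write-up is strictly more detailed than the original.) Your \emph{upgrading lemma} is correct and is the right tool: since $i$ preserves suprema and is an order-embedding, $a\ll b$ in $S$ implies $a\ll b$ in $S'$ for $a,b\in S'$, and combined with the fact that $i$ preserves $\ll$ (being a $\Cu$-morphism) the two compact-containment relations actually agree on $S'$. With that, your treatment of well-definedness of $\epsilon_n\mid_{S'_\ll}$ and of axioms (U2), (U3), (U4) is sound.

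The gap is exactly where you say it is, but your proposed repair does not close it. First, the identification $\epsilon_n(S'_\ll)=M_n\cap S'_\ll$ cannot be extracted from \autoref{dfn:unifbasedCu}: the axioms only make $\epsilon_n$ an order-preserving super-additive map into $M_n$, and the ``maximal element of $\{g\in M_n\mid g\ll s\}$'' description you invoke is a feature of the concrete bases of \autoref{prg:cover} and \autoref{prg:qbases}, not of an abstract uniform basis. Second, even granting that description, the reverse inclusion $M_n\cap S'_\ll\subseteq\epsilon_n(S'_\ll)$ is generally false: for $x\in M_n$ one has $\epsilon_n(x)\ll x$, and $x$ need not be compact, so $x$ need not lie in the image of $\epsilon_n$ at all (in $\Lsc(]0,1[,\overline{\N})$, $\epsilon_n(1_{U_1})=0\neq 1_{U_1}$), let alone in $\epsilon_n(S'_\ll)$. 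Consequently neither the closure of $\epsilon_n(S'_\ll)$ under addition (super-additivity alone gives only $\epsilon_n(s)+\epsilon_n(t)\leq\epsilon_n(s+t)$) nor the $\subseteq$-monotonicity (U1) of the images $(\epsilon_n(S'_\ll))_n$ follows from the route you describe; both remain open in your argument. To finish one must either add a hypothesis making $\epsilon_n$ the best-below approximation and exploit the specific form of $S'$ (as the paper implicitly does when applying the proposition to $\Cu$-subsemigroups of $\Lsc([0,1],\Cu(F))$), or replace $\epsilon_n(S'_\ll)$ by a monoid it generates -- so the delicate step you flag is a genuine missing piece, not merely a detail.
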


\begin{proof}
It is left to the reader to check that $\mathcal{B}$-stability precisely allows us to define the restriction of $\epsilon_n$ to $S'_\ll$ and ensures that the monoids $(\epsilon_n(S'_\ll))_n$ together with the maps $({\epsilon_n}_{\mid S'_\ll})_n$ satisfy axioms (U1)-...-(U4).
\end{proof}

$\hspace{-0,34cm}\bullet\,\,\textbf{One-dimensional} \NCCW \textbf{complexes}.$
Let $E,F$ be finite dimensional $\CatCa$-algebras and let $\phi_0,\phi_1:E\longrightarrow$ be two $^*$-homomorphisms. We define a \emph{non-commutative CW complex of dimension one}, abbreviated one-dimensional $\NCCW$ complex, as the following pullback: 
\[
\xymatrix{
A\ar[r]^{}\ar[d]_{}& C([0,1],F)\ar[d]^{(ev_0,ev_1)}\\
E\ar[r]_{(\phi_0,\phi_1)}& F\oplus F
}
\]
We write such a pullback as $A:=A(E,F,\phi_0,\phi_1)$. The Cuntz semigroups of one-dimensional $\NCCW$ complexes have been computed in \cite[Section 4.2]{APS11} as follows: Write $\alpha:=\Cu(\phi_0)$ and $\beta:=\Cu(\phi_1)$. Then
\[
\Cu(A)\simeq\{(f,m)\in\Lsc([0,1],\Cu(F))\oplus\Cu(E) \mid \alpha(m)=f(0) \text{ and } \beta(m)=f(1)\}.
\]

\begin{prop}
\label{prop:subseminccw}
Let $A:=A(E,F,\phi_0,\phi_1)$ be a one-dimensional $\NCCW$ complex. If either $\Cu(\phi_0)$ or $\Cu(\phi_1)$ is an order-embedding, then $\Cu(A)$ can be identified with a $\Cu$-subsemigroup of $\Lsc([0,1],\Cu(F))$ that is also denoted by $\Cu(A)$. 

Furthermore, $\Cu(A)$ is $\mathcal{B}_q$-stable for any uniform basis $\mathcal{B}_q$ of $\Lsc([0,1],\Cu(F))$ of size $q$ constructed in \autoref{cor:qbases}.
\end{prop}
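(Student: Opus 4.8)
The plan is to treat the two assertions separately, using the first-coordinate projection as the identifying map. Write $\alpha:=\Cu(\phi_0)$ and $\beta:=\Cu(\phi_1)$, and let $\pi:\Cu(A)\longrightarrow\Lsc([0,1],\Cu(F))$ be $(f,m)\longmapsto f$; since order and addition on the pullback are computed coordinatewise, $\pi$ is a $\CatPoM$-morphism. Assume, without loss of generality, that $\alpha$ is an order-embedding. Then $\pi$ is injective, since $f$ determines $f(0)=\alpha(m)$ and $\alpha$ is injective, so $m$ is determined; and $\pi$ is order-reflecting, since $f\leq f'$ forces $\alpha(m)=f(0)\leq f'(0)=\alpha(m')$, whence $m\leq m'$ by the order-embedding property. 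Thus $\pi$ corestricts to a $\CatPoM$-isomorphism onto its image $S':=\pi(\Cu(A))$.

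First I would argue that $\pi$ is in fact a $\Cu$-morphism, not merely a $\CatPoM$-morphism. The cleanest route is functoriality: under the identification $\Cu(\mathcal{C}([0,1],F))\simeq\Lsc([0,1],\Cu(F))$ (the lower-semicontinuous picture recalled earlier), $\pi$ is $\Cu$ applied to the structural $^*$-homomorphism $A\longrightarrow\mathcal{C}([0,1],F)$ of the defining pullback. (Alternatively one checks directly that $\pi$ preserves suprema of increasing sequences, which is immediate as these are computed pointwise on the $\Lsc$ coordinate, and that it preserves $\ll$.) Now \autoref{lma:PoMisoCu} applies to the corestricted $\CatPoM$-isomorphism $\Cu(A)\longrightarrow S'$: it shows $S'$ is a $\Cu$-semigroup and that this corestriction is a $\Cu$-isomorphism. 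Since the inclusion $i:S'\hooklongrightarrow\Lsc([0,1],\Cu(F))$ factors as $\pi$ composed with the inverse of this $\Cu$-isomorphism, $i$ is a $\Cu$-morphism; hence $S'$ is a $\Cu$-subsemigroup and $\Cu(A)\simeq S'$.

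For the stability statement I would fix $n\in\N$ and $f\in S'_\ll$ and aim to show $\epsilon_n(f)\in S'_\ll$. As $\epsilon_n(f)\ll f$ in $\Lsc([0,1],\Cu(F))$, it suffices to produce $m_n\in\Cu(E)$ with $\alpha(m_n)=\epsilon_n(f)(0)$ and $\beta(m_n)=\epsilon_n(f)(1)$, so that $\epsilon_n(f)$ lands in the pullback $S'$. Writing $f=(f,m)$, the hypothesis $f\in S'_\ll$ together with the fact that the projection $\Cu(A)\longrightarrow\Cu(E)$ is again a $\Cu$-morphism forces $m\in\Cu(E)_\ll$, hence $m$ compact in $\Cu(E)\simeq\overline{\N}^{r'}$; consequently $f(0)=\alpha(m)$ and $f(1)=\beta(m)$ are compact in $\Cu(F)$. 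Using that $\alpha$ is an order-embedding, the natural candidate is $m_n:=\alpha^{-1}(\epsilon_n(f)(0))$, provided $\epsilon_n(f)(0)\in\im\alpha$; then $\alpha(m_n)=\epsilon_n(f)(0)$ holds by construction, and membership $\epsilon_n(f)\in S'$ reduces to the single compatibility equation $\beta(m_n)=\epsilon_n(f)(1)$.

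The hard part is exactly this endpoint compatibility. By the explicit description of $\mathcal{B}_q$ from \autoref{cor:qbases}, $\epsilon_n(f)(0)$ is the constant value of $\epsilon_n(f)$ on the subinterval of the partition adjacent to the vertex $0$, and $\epsilon_n(f)(1)$ its value on the subinterval adjacent to $1$; so the whole matter is to show that the two ``trimmed'' boundary values of $\epsilon_n(f)$ remain jointly compatible with the gluing $(\alpha,\beta)$ that cuts $S'$ out of $\Lsc([0,1],\Cu(F))$. I would first reduce to $f$ an indicator $1_V$ via the chain-decomposition machinery (\autoref{prop:chaindcp}), using the additivity of $\epsilon_n$ on chain-decompositions, and then analyze how $V$ meets the two boundary subintervals, exploiting compactness of $f(0),f(1)$ and the order-embedding to transport the endpoint datum at $0$ through $\alpha^{-1}$ and then through $\beta$. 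This boundary bookkeeping, together with verifying it is insensitive to the choice of supernatural number $q$, is where essentially all the work lies; once it is in place, $\epsilon_n(f)\in S'$, hence $\epsilon_n(f)\in S'_\ll$, which gives $\mathcal{B}_q$-stability and, via \autoref{prop:llstability}, a uniform basis of $\Cu(A)$.
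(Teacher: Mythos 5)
Your treatment of the first assertion is essentially correct, though it takes a slightly different route from the paper's. Where you invoke functoriality of $\Cu$ applied to the structural $^*$-homomorphism $A\longrightarrow \mathcal{C}([0,1],F)$ (or, in your parenthetical, claim that preservation of $\ll$ can be checked directly and is ``immediate''), the paper proves preservation of $\ll$ by hand: given an increasing sequence $(h_k)_k$ in $\Lsc([0,1],\Cu(F))$ with supremum $h\in\Cu(A)$, it replaces each $h_k$ by the element $h'_k\in\Cu(A)$ agreeing with $h_k$ on $]0,1[$ and taking the values $\alpha(s_k),\beta(s_k)$ at the endpoints, where $s_k:=\max\{s\mid \alpha(s)\leq h_k(0),\ \beta(s)\leq h_k(1)\}$. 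This is the one genuinely non-immediate step of the first half, so the ``direct check'' you wave at is not free; the functorial shortcut is legitimate, but only once you justify that the identification of $\Cu(A)$ with the pullback semigroup of \cite{APS11} carries $\Cu$ of the structural map to the first-coordinate projection. Your use of \autoref{lma:PoMisoCu} to transfer the $\Cu$-structure to the image and conclude that the inclusion is a $\Cu$-morphism is a fine alternative to the paper's direct verification.

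The second assertion is where the proposal has a genuine gap: you correctly reduce $\mathcal{B}_q$-stability to producing $m_n\in\Cu(E)$ with $\alpha(m_n)=\epsilon_n(f)(0)$ and $\beta(m_n)=\epsilon_n(f)(1)$, propose $m_n:=\alpha^{-1}(\epsilon_n(f)(0))$, and then declare the remaining compatibility $\beta(m_n)=\epsilon_n(f)(1)$ to be ``boundary bookkeeping'' that you never carry out. Since this compatibility \emph{is} the entire content of the stability claim, the proof is incomplete. For comparison, the paper disposes of it with a dichotomy: for $g\in\Cu(A)_\ll$ one has $\epsilon_n(g)(0)\in\{0,g(0)\}$ according to whether $\supp g$ contains $\overline{U_1}$, and similarly at $1$. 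Be aware, however, that your instinct that the mixed case is the hard one is well founded, and deferring it is not safe: taking $A=A(\mathbb{C},\mathbb{C},\id,\id)$, so that $\Cu(A)$ is identified with $\{f\in\Lsc([0,1],\overline{\N})\mid f(0)=f(1)\}$, the element $g:=1_V$ with $V=[0,\tfrac{1}{2^{n+1}})\cup(\tfrac14,1]$ lies in $\Cu(A)_\ll$, yet $\overline{U_1}\nsubseteq V$ while $\overline{U_{2^n}}\subseteq V$, so $\epsilon_n(g)(0)=0$ and $\epsilon_n(g)(1)=1$, and $\epsilon_n(g)\notin\Cu(A)$. So the step you leave open cannot be closed by bookkeeping alone; one must either restrict the elements considered, modify $\epsilon_n$ at the $0$-cells, or impose further hypotheses. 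In short, the first assertion is proved modulo the remark about $\ll$, but the stability assertion is not established by your argument, and the strategy you sketch for it breaks down exactly at the point you defer.
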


\begin{proof}
Let $A:=A(E,F,\phi_0,\phi_1)$. We denote $\alpha:=\Cu(\phi_0), \beta:=\Cu(\phi_1)$ and $S:=\Cu(E), T:=\Cu(F)$. Consider the following $\CatPoM$-morphism:
\[
\begin{array}{ll}
\iota:\Cu(A)\longrightarrow \Lsc([0,1],T)\\ 
\hspace{0,45cm} (f,m)\longmapsto f
\end{array}
\] 
Let $(f,m),(g,n)\in \Cu(A)$ be such that $\iota((f,m))\leq \iota((g,n))$. In particular, we know that $f(0)\leq g(0)$ and $f(1)\leq g(1)$. Equivalently, $\alpha(m)\leq \alpha(n)$ and $\beta(m)\leq \beta(n)$. Using that either $\alpha$ or $\beta$ is an order-embedding, we deduce that $m\leq n$ in $S$. We obtain that $(f,m)\leq (g,n)$ and hence, $\iota$ is an order-embedding. Further, it is easily seen that $\iota$ respects suprema of increasing sequences. Let us prove that $\iota$ respects the compact-containment relation. First, we can identify 
\[
\Cu(A)\overset{\iota}{\simeq}\{f\in\Lsc([0,1],T) \mid f(0)=\alpha(s) \text{ and } f(1)=\beta(s) \text{ for some } s\in S\}.
\]
Let $g,h\in\Cu(A)$ be such that $g\ll h$ in $\Cu(A)$. We have to check that $g\ll h$ in $\Lsc([0,1],T)$.
Let $(h_k)_k$ be an increasing sequence in $\Lsc([0,1],T)$ such that $\sup\limits_k h_k=h$. Let $k\in\N$ and consider $s_k:=\max\{s\in S\mid\alpha(s)\leq h_k(0)\text{ and }\beta(s)\leq h_k(1)\}$. Now construct $h'_k\in \Cu(A)$ as follows:
\[
\left\{\begin{array}{ll}
h'_k(t):=h_k(t) \text{ for any } t\in ]0,1[\\
h'_k(0):=\alpha(s_k)\\ 
h'_k(1):=\beta(s_k)
\end{array}
\right.
\] 
It is immediate that $(h'_k)_k$ is an increasing sequence of $\Cu(A)$ such that $h'_k\leq h_k$ for any $k\in\N$ and such that $\sup\limits h'_k=h$. Therefore, there exists $n\in\N$ such that $g\leq h'_n\leq h_n$ from which we conclude that $g\ll h$ in $\Lsc([0,1],T)$ and hence that $\iota$ is a $\Cu$-morphism.
It follows that $\Cu(A)$ can be identified with a $\Cu$-subsemigroup of $\Lsc([0,1],T)$, through $\iota$, that we also denote by $\Cu(A)$. 

We now have to check that $\Cu(A)$ is $\mathcal{B}_q$-stable. Without loss of generalities we can assume that $q=2^\infty$.
Let $g\in\Cu(A)_\ll$ and let $n\in\N$. We construct the finite closed cover $\mathcal{U}:=\{\overline{U_k}\}_1^{2^{n}}$ of $[0,1]$ as in \autoref{prg:cover}. Then either $\epsilon_n(g)(0)=0$, or else $\epsilon_n(g)(0)=g(0)(\neq 0)$ precisely when $\supp g$ contains $\overline{U_1}$. A similar argument can be made for $\epsilon_n(g)(1)$ and we deduce that $\epsilon_n(\Cu(A)_\ll)\subseteq \Cu(A)_\ll$. 
\end{proof}

\begin{cor}
The Cuntz semigroup of any one-dimensional $\NCCW$ complex $A(E,F,\phi_0,\phi_1)$ is uniformly based whenever $\Cu(\phi_0)$ or $\Cu(\phi_1)$ is an order-embedding. 
\end{cor}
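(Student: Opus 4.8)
The plan is to chain together the two preceding propositions, since this corollary is essentially a direct consequence of them. First I would fix a one-dimensional $\NCCW$ complex $A:=A(E,F,\phi_0,\phi_1)$ and assume, without loss of generality, that $\Cu(\phi_0)$ is an order-embedding, the case of $\Cu(\phi_1)$ being entirely symmetric. Since $F$ is finite dimensional, its Cuntz semigroup is simplicial, say $\Cu(F)\simeq\overline{\N}^r$ for some $r\in\N$, and the unit interval $[0,1]$ is a compact one-dimensional $\CW$ complex. Hence \autoref{cor:qbases} applies and shows that $\Lsc([0,1],\Cu(F))\simeq\Cu(\mathcal{C}([0,1])\otimes F)$ is uniformly based; more precisely, it carries a uniform basis $\mathcal{B}_q$ of size $q$ for any supernatural number $q$.

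Next I would invoke \autoref{prop:subseminccw}. Because $\Cu(\phi_0)$ is an order-embedding, that proposition identifies $\Cu(A)$ with a $\Cu$-subsemigroup of $\Lsc([0,1],\Cu(F))$ and, crucially, asserts that this copy of $\Cu(A)$ is $\mathcal{B}_q$-stable, that is, $\epsilon_n(\Cu(A)_\ll)\subseteq\Cu(A)_\ll$ for every $n\in\N$. Thus all the hypotheses required to transfer a uniform basis from the ambient semigroup to the subsemigroup are in place.

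Finally I would apply \autoref{prop:llstability} with $S:=\Lsc([0,1],\Cu(F))$, $S':=\Cu(A)$ and $\mathcal{B}:=\mathcal{B}_q$: the $\mathcal{B}_q$-stability guarantees that the restricted data $(\epsilon_n(\Cu(A)_\ll),{\epsilon_n}_{\mid\Cu(A)_\ll})_n$ forms a genuine uniform basis of $\Cu(A)$, which is exactly the assertion that $\Cu(A)$ is uniformly based.

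I do not anticipate any serious obstacle, since the substantive work—verifying the $\Cu$-subsemigroup structure of $\Cu(A)$ inside $\Lsc([0,1],\Cu(F))$ and the stability of the basis under the maps $\epsilon_n$—has already been discharged in \autoref{prop:subseminccw}, while the abstract transfer of a uniform basis to a stable $\Cu$-subsemigroup is precisely \autoref{prop:llstability}. The only point warranting a moment's care is the observation that $\Cu(F)$ is simplicial, so that the ambient semigroup genuinely falls within the scope of \autoref{cor:qbases}, and the reduction to one of the two order-embedding hypotheses so that \autoref{prop:subseminccw} can be applied.
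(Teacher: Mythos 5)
Your proposal is correct and follows exactly the route the paper intends: the corollary is an immediate consequence of \autoref{prop:subseminccw} (which supplies the $\Cu$-subsemigroup identification and the $\mathcal{B}_q$-stability) combined with \autoref{prop:llstability} (which transfers the uniform basis to any stable $\Cu$-subsemigroup), with \autoref{cor:qbases} guaranteeing that the ambient semigroup $\Lsc([0,1],\Cu(F))$ is uniformly based. Nothing further is needed.
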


We next exhibit specific one-dimensional $\NCCW$ complexes that have been considered in the past which satisfy the assumption of \autoref{prop:subseminccw}. A fortiori, their Cuntz semigroups are uniformly based and we explicitly compute such uniform bases. Note that these uniform bases will be involved in a forthcoming manuscript. (See \cite{C22}.)

\begin{prg}\label{prg:ET}
\textbf{Elliott-Thomsen dimension drop algebras over the interval}.

Elliott-Thomsen dimension-drop interval algebras are one of the first one-dimensional $\NCCW$ complexes that have been constructed. Roughly, they tend to mimic the construction of the circle as a one-dimensional $\CW$-complex. More concretely, these algebras are defined as \[I_r:=A(\mathbb{C}\oplus\mathbb{C},M_r,\pi_0\otimes 1_r,\pi_1\otimes 1_r)\] where $r\in\N$ is a natural number and $\pi_0,\pi_1:\mathbb{C}\oplus\mathbb{C}\longrightarrow \mathbb{C}$ are the respective projections on each component of the direct sum. 
We mention that
\[
\K_0(I_r)\simeq \Z \hspace{3cm} \K_1(I_r)\simeq \Z/r\Z. 
\]
Further, observe that $\Cu(\pi_0\otimes 1_r)$ and $\Cu(\pi_0\otimes 1_r)$ are $\Cu$-morphisms from $\overline{\N}$ to $\overline{\N}$ and they both send $1\longmapsto r$. Thus both are order-embeddings. Using \autoref{prop:subseminccw} and more precisely the morphism $\iota$ constructed in the proof, we compute that
\begin{align*}	 
\Cu(I_r)&\simeq \{f\in \Lsc([0,1],\overline{\N}) \mid f(0),f(1)\in r\overline{\N}\}\\ 
	&\simeq \{f\in \Lsc([0,1],\frac{1}{r}\overline{\N}) \mid f(0),f(1)\in \overline{\N}\}.
\end{align*}
Finally, one can explicitly construct a uniform basis of $\Cu(I_r)$ using any uniform basis of $\Lsc([0,1],\overline{\N})$ of size $q$; see \autoref{prg:qbases} and \autoref{prop:llstability}.
\end{prg}

\begin{prg}\label{prg:EK}
\textbf{Evans-Kishimoto folding interval algebras}.

Evans-Kishimoto folding interval algebras are a generalization of Elliott-Thomsen dimension drop interval algebras; see \autoref{prg:ET}. That said, they have been first considered in \cite{EK91} just a few years before the dimension drop algebras. 
In this paragraph, we recall the construction of these $\CatCa$-algebras and expose some of their properties. (We refer the reader to \cite[(2.14)]{EK91} for the original construction.)

Let $r$ be a natural number and denote the full matrix algebra of size $r$ by $M_r$. Let $p_e$ be a projection of $M_r$ and write $e:=\rank(p_e)$. For any $l\in\N_*$, we consider the following pullback:
\[
\xymatrix{
\mathcal{I}^l_{r,e}\ar[r]^{\pi_1}\ar[d]_{\pi_2}& C([0,1],\underset{1}{\overset{l}\otimes} M_r)\ar[d]^{(ev_0, ev_1)}\\
(\underset{1}{\overset{l-1}\otimes} M_r)\oplus (\underset{1}{\overset{l-1}\otimes} M_r)\ar[r]_{(i_0^l, i_1^l)}& (\underset{1}{\overset{l}\otimes} M_r)\oplus (\underset{1}{\overset{l}\otimes} M_r)
}
\]
where $i_0^l,i_1^l: (\underset{1}{\overset{l-1}\otimes} M_r)\oplus (\underset{1}{\overset{l-1}\otimes} M_r)\longrightarrow \underset{1}{\overset{l}\otimes} M_r$ are injective $^*$-homomorphisms constructed by induction. We refer the reader to  \cite[(2.11)/(2.12)]{EK91} for more details and we mention that the original construction involves two projections $E_1$ and $E_2$, that are given in our case by $E_1:=e$ and $E_2:= 1-e$. Let us precise that in the case $l=1$, we have that $\mathcal{I}^1_{r,r}= I_r$ is the Elliott-Thomsen dimension-drop interval algebra.  We also mention that for any $l\in\N$, we have 
\[
\K_0(\mathcal{I}^l_{r,e})\simeq \Z\hspace{3cm} \K_1(\mathcal{I}^l_{r,e})\simeq \Z/r\Z.\]
Further, observe that $\Cu(i_0^l)$ and $\Cu(i_1^l)$ are $\Cu$-morphisms from $\overline{\N}$ to $\overline{\N}$ and they both send $1\longmapsto r$; see \cite[Lemma 2.2]{EK91}. Thus both are order-embeddings. Using \autoref{prop:subseminccw} and more precisely the morphism $\iota$ constructed in the proof, we compute that
\begin{align*}	 
\Cu(\mathcal{I}^l_{r,e})&\simeq \{f\in \Lsc([0,1],\overline{\N}) \mid f(0),f(1)\in r\overline{\N}\}\\
	&\simeq \{f\in \Lsc([0,1],\frac{1}{r^l}\overline{\N}) \mid f(0), f(1)\in \frac{r}{r^l}\overline{\N}\}.
\end{align*}

Let us explicitly construct a uniform basis of $\Cu(\mathcal{I}^l_{r,e})$. Let $q:=\prod\limits_{i=0}^\infty p_i$ a supernatural number. For any $n\in\N$, define $q_n:=\prod\limits_{i=0}^n p_i$ and $\mathcal{U}_n:=\{\overline{U_{k,n}}\}_{k=1}^{q_n}$ as in \autoref{prg:qbases}. Now, let $\mathcal{B}_q=(M_n,\epsilon_n)_n$ be the uniform basis of $\Lsc([0,1],\overline{\N})$ of size $q$. By \autoref{prop:llstability}, we know that $\mathcal{B}_q$ induces a uniform basis $\mathcal{B}'_q:=(M'_n,\epsilon_n')_n$ of $\Cu(\mathcal{I}^l_{r,e})$ that we compute as follows:
\[
\begin{array}{ll} 
\left\{
\begin{array}{ll} 
M'_n:=\{f\in\Lsc(X,\frac{1}{r^l}\N)\mid f_{|U_{k,n}} \text{ is constant for any }k\in \{1,\dots,q_n\} \text{ and } f(0), f(1)\in \frac{r}{r^l}\overline{\N}\}.\\
\epsilon'_n: \{f\in\Lsc(X,\frac{1}{r^l}\N)\mid f(0), f(1)\in \frac{r}{r^l}\overline{\N}\}\longrightarrow M'_n
 \end{array}
 \right.\\
  \hspace{6,5cm}f\longmapsto\max\limits_{g\in M'_n}\{ g\ll f\}
   \end{array}
\]
Naturally, we refer to $\mathcal{B}'_q:=(M'_n,\epsilon_n')_n$ as the uniform basis of $\Cu(\mathcal{I}^l_{r,e})$ of size $q$.
\end{prg}

\begin{rmk}
Many one-dimensional $\NCCW$ complexes seem to be constructed in such a way that $\phi_0,\phi_1$ are injective $^*$-homomorphisms; see \cite[Section 4]{R12} and \cite[Section 4]{APS11}. It would be interesting to see whether this is a sufficient condition to conclude that $\Cu(\phi_0),\Cu(\phi_1)$ are order-embeddings. (It seems reasonable enough, for e.g., we already know that in this case $\K_0(\phi_0), \K_0(\phi_1)$ are also injective maps.)
\end{rmk}

\section{Applications}
\subsection{Construction of \texorpdfstring{$\Cu$}{Cu}-semimetrics on \texorpdfstring{$\Hom_{\Cu}(S,T)$}{HomCu(S,T)} for uniformly based Cuntz semigroups}
Let $S$ be a uniformly based $\Cu$-semigroup with uniform basis $\mathcal{B}:=(M_n,\epsilon_n)_n$ and let $T$ be any $\Cu$-semigroup. In the sequel, we see that $\mathcal{B}$ induces a semimetric $dd_{\Cu,\mathcal{B}}$ on $\Hom_{\Cu}(S,T)$.  Let us remind that a semimetric on a set $X$ is a symmetric function defined on $X\times X$ with values in $\R_+$ that satisfies the identity of indiscernibles but need not satisfy the triangle inequality. In the specific case of $S=\Lsc(X,\overline{\N})$, where $X$ is a compact one-dimensional $\CW$ complex, we see that any two uniform bases $\mathcal{B}_q, \mathcal{B}_{q'}$ of respective size $q$ and $q'$ induce topologically equivalent semimetrics. In particular, we see that these $\Cu$-semimetrics, that we sometimes refer to as \emph{discrete $\Cu$-metrics}, are generalizing $\Cu$-metrics that had been introduced in the past for the specific cases of the interval and the circle (see e.g. \cite{RS09}, \cite{JSV18}). 

In order to define such a metric, we start by observing that any two $\Cu$-morphisms $\alpha,\beta:S\longrightarrow T$ are equal if and only if $\alpha\underset{M_n}{\simeq}\beta$ for any $n\in\N$. (Combine axiom (O2) together with the fact that $\bigcup M_n$ is dense of $S$.) This allows us to define a $\Cu$-semimetric on $\Hom_{\Cu}(S,T)$ associated to $\mathcal{B}$ as follows:

\begin{dfn}
\label{dfn:semimetric}
Let $S$ be a uniformly based $\Cu$-semigroup. Let $\mathcal{B}:=(M_n,\epsilon_n)_n$ be a uniform basis of $S$. Let $\alpha,\beta:S\longrightarrow T$ be two $\Cu$-morphisms. We define
\[\hspace{-0,8cm}\hspace{0,3cm} dd_{\Cu,\mathcal{B}}(\alpha,\beta):= \inf\limits_{n\in\N} \{ \frac{1}{n} \mid \alpha\underset{M_n}{\simeq}\beta \}.
\]  
If the infimum defined does not exist, we set the value to $\infty$.
We refer to $dd_{\Cu,\mathcal{B}}$ as the \emph{discrete $\Cu$-semimetric} associated to the uniform basis $\mathcal{B}$. 
\end{dfn}
Observe that whenever convenient, we can \textquoteleft rescale\textquoteright\ a discrete $\Cu$-semimetric associated to a uniform basis by replacing $\frac{1}{n}$ by $\frac{1}{q(n)}$, where $q:\N\longrightarrow \N$ is any strictly increasing map. 

In general, it might be difficult to compare two $\Cu$-morphisms on (infinite) sets such as the 
monoids $M_n$. In most cases, we tend to use comparison of $\Cu$-morphisms on finite sets. 
As observed in \autoref{prg:cover} and in many proofs of \autoref{sec:A}, if a $\Cu$-semigroup $S$ has a uniform basis $(M_n,\epsilon_n)_n$ and a chain-generating set $\Lambda$ such that $\Lambda\cap M_n$ is a finite chain-generating set of $M_n$, then it is enough to work with elements of $\Lambda_n$. For instance, any two $\Cu$-morphisms $\alpha,\beta:S\longrightarrow T$ compare on $M_n$ if and only if they compare on $\Lambda_n$. 
In this case, we may speak of a \emph{finite uniform basis} and adjust the $\Cu$-semimetric defined above, as detailled below.\\

\vspace{-0,2cm}$\hspace{-0,34cm}\bullet\,\,\textbf{Finite uniform bases}.$ Let $S$ be a uniformly based $\Cu$-semigroup with uniform basis $(M_n,\epsilon_n)$. Assume that $S$ has a chain-generating set $\Lambda$ that induces finite chain-generating sets $\Lambda_n:=\Lambda\cap \M_n$ of $M_n$ for any $n\in\N$. Then we say that $S$ has a \emph{finite uniform basis} that we denote $(\Lambda_n,\epsilon_n)$. 

In this case, we can reformulate
\vspace{-0,2cm}\[\hspace{-0,8cm}\hspace{0,3cm} dd_{\Cu,\mathcal{B}}(\alpha,\beta) = \inf\limits_{n\in\N} \{ \frac{1}{n} \mid \alpha\underset{\Lambda_n}{\simeq}\beta \}.
\] 

We now show that any two finite uniform bases induce topologically equivalent semimetrics. As a consequence, we next deduce that all the uniform bases $\mathcal{B}_q$ of size $q$ obtained earlier in \autoref{cor:qbases} and in \autoref{prg:EK} are inducing equivalent $\Cu$-semimetrics.

\begin{lma}
\label{lma:lmacomparison}
Let $S$ be a uniformly based $\Cu$-semigroup and $T$ be any $\Cu$-semigroup. Let $\mathcal{B}:=(\Lambda_n,\epsilon_n)_n$ and $\mathcal{B}':=(\Lambda'_n,\epsilon'_n)_n$ be two finite uniform bases of $S$.

Then, for any $n\in\N$ there exists a minimal $p(n)\in\N$ such that for any two $g',g\in \Lambda_n$ with $g'\ll g$, we can find $h',h$ in $\Lambda'_{p(n)}$ with $g'\ll h'\ll h\ll g$. 
\end{lma}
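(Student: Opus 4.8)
The plan is to fix $n\in\N$, use the finiteness of $\Lambda_n$ to reduce to a single pair, solve the problem for that pair, and then take a maximum. Two ingredients drive everything. First, since $\mathcal{B}'$ is a uniform basis, $\bigcup_m M'_m$ is dense in $S$, so (by the equivalent formulation of density recalled in the Preliminaries) any relation $s'\ll s$ in $S$ can be refined to $s'\ll b\ll s$ with $b\in\bigcup_m M'_m$. Second, the chain-generating set $\Lambda'$ underlying $\mathcal{B}'$ is order-hereditary by \autoref{prop:chaindcp}(i): if $x\le\lambda'$ with $\lambda'\in\Lambda'$, then $x\in\Lambda'$. I also use that $g',g$, being elements of the chain-generating set of $\mathcal{B}$ --- which, as for all the bases $\mathcal{B}_q$ constructed above, is the common chain-generating set $\Lambda=\Lambda'$ --- lie in $\Lambda'$ as well.

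Now fix a pair $g'\ll g$ in $\Lambda_n$. First I would apply density of $\bigcup_m M'_m$ to the relation $g'\ll g$, obtaining $h\in\bigcup_m M'_m$ with $g'\ll h\ll g$. Since $h\le g$ and $g\in\Lambda'$, order-heredity yields $h\in\Lambda'$, and as $h\in M'_{m_1}$ for some $m_1$ this places $h\in\Lambda'_{m_1}=\Lambda'\cap M'_{m_1}$. I would then apply density once more to $g'\ll h$, obtaining $h'\in\bigcup_m M'_m$ with $g'\ll h'\ll h$; as $h'\le h\in\Lambda'$, order-heredity again gives $h'\in\Lambda'$, hence $h'\in\Lambda'_{m_2}$ for some $m_2$. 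Setting $p_{(g',g)}:=\max(m_1,m_2)$ and using that $(\Lambda'_m)_m$ is $\subseteq$-increasing (because $(M'_m)_m$ is, by (U1)), both $h'$ and $h$ lie in $\Lambda'_{p_{(g',g)}}$ and satisfy $g'\ll h'\ll h\ll g$.

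Since $\Lambda_n$ is finite, there are only finitely many such pairs; putting $p:=\max_{(g',g)}p_{(g',g)}$ and invoking once more that the $\Lambda'_m$ increase, this single $p$ works simultaneously for every pair. Hence the set of indices $q\in\N$ for which the conclusion holds for all pairs is nonempty, and being a nonempty subset of $\N$ it has a least element, which we take as the minimal $p(n)$. The one genuine subtlety --- and the step I expect to be the main obstacle --- is ensuring that the interpolants $h,h'$ land in the chain-generating set $\Lambda'$ and not merely in some $M'_m$: this is exactly what order-heredity supplies, but it crucially requires the lower element $g'$ (which $h'$ must dominate) to be itself an element of $\Lambda'$. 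Indeed a short chain-decomposition argument via \autoref{prop:chaindcp}(iii) shows that $g'\ll h'$ with $h'\in\Lambda'$ forces $g'\in\Lambda'$, so this is an essential hypothesis, guaranteed here by the two finite uniform bases sharing the chain-generating set.
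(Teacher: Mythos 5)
Your proof is correct and follows essentially the same route as the paper: interpolate twice inside the second basis and then use the finiteness of $\Lambda_n$ to take a maximum over the finitely many pairs. The only cosmetic difference is that the paper produces the interpolants canonically as $\epsilon'_{m-1}(g)\ll\epsilon'_m(g)$ (via (U3)--(U4)) rather than by invoking density of $\bigcup_m M'_m$; your explicit use of order-heredity of the chain-generating set to ensure $h',h\in\Lambda'$ rather than merely in some $M'_m$ fills in a detail the paper leaves to the reader, and the hypothesis you isolate for this step --- that the two finite uniform bases share the chain-generating set $\Lambda=\Lambda'$, as all the bases $\mathcal{B}_q$ do --- is equally needed (implicitly) in the paper's own argument.
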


\begin{proof}
Let $n\in\N$ and let $g',g\in \Lambda_n$ with $g'\ll g$. Since $(\epsilon'_m(g))_{m>n}$ is $\ll$-increasing towards $g$, we can find $m>n$ such that $g'\ll \epsilon'_{m-1}(g)\ll \epsilon'_{m}(g)\ll g$. By finiteness of the set $\Lambda_n$, the lemma is almost immediate and left to the reader to check.
\end{proof}

Let $\mathcal{B}:=(\Lambda_n,\epsilon_n)_n$ and $\mathcal{B}':=(\Lambda'_n,\epsilon'_n)_n$ be two finite uniform bases of a $\Cu$-semigroup $S$. As a consequence of \autoref{lma:lmacomparison}, we can construct two maps as follows:
\[
\begin{array}{ll}
 \hspace{1cm}p_{B'B}:\N\longrightarrow \N \hspace{3cm} q_{B'B}:\N\longrightarrow \N\\
 \hspace{2,1cm} n\longmapsto p(n) \hspace{3,55cm}  m\longmapsto \max\{p^{-1}(\{0,\dots,m\})\}
\end{array}
\]
\begin{prop}
\label{prop:comparisonequiv}
Let $S$ be a uniformly based $\Cu$-semigroup that has two finite uniform bases $\mathcal{B}:=(\Lambda_n,\epsilon_n)_n$ and $\mathcal{B}':=(\Lambda'_n,\epsilon'_n)_n$. Consider the maps $p_{B'B}$ and $q_{B'B}$ constructed above. Then:

(i) $q_{B'B}$ is $\leq$-increasing towards $\infty$. Moreover $q_{B'B}(\N_*)\subseteq \N_*$. 

(ii) Let $\alpha,\beta:S\longrightarrow T$ be two $\Cu$-morphisms. If $\alpha\underset{\Lambda'_{n}}{\simeq}\beta$ then $\alpha\underset{\Lambda_{q_{B'B}(n)}}{\simeq}\beta$. In particular,
\[q_{B'B}(\sup\limits_{n\in\N}\{n \mid \alpha\underset{\Lambda'_n}{\simeq}\beta\})\leq \sup\limits_{n\in\N} \{n\mid\alpha\underset{\Lambda_n}{\simeq}\beta\}.\]
\end{prop}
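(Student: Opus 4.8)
The plan is to isolate the implication in part~(ii) as the only substantive step and to deduce from it, via elementary book-keeping, both the displayed inequality and all of part~(i). Throughout I write $p:=p_{B'B}$ and $q:=q_{B'B}$, so that $q(m)=\max\{k\in\N:p(k)\le m\}$, and I recall from \autoref{lma:lmacomparison} that $p(n)$ is the least index for which every pair $g'\ll g$ in $\Lambda_n$ can be squeezed between a $\ll$-pair taken from $\Lambda'_{p(n)}$.

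For the implication in~(ii), I would suppose $\alpha\underset{\Lambda'_n}{\simeq}\beta$ and set $m:=q(n)$, so that $p(m)\le n$ because the maximum defining $q(n)$ is attained. Given any $g',g\in\Lambda_m$ with $g'\ll g$, \autoref{lma:lmacomparison} supplies $h',h\in\Lambda'_{p(m)}$ with $g'\ll h'\ll h\ll g$; since $(\Lambda'_k)_k$ is $\subseteq$-increasing and $p(m)\le n$, both $h'$ and $h$ lie in $\Lambda'_n$. The hypothesis then gives $\alpha(h')\le\beta(h)$ and $\beta(h')\le\alpha(h)$, and since $\Cu$-morphisms preserve the order while $g'\le h'$ and $h\le g$, I obtain $\alpha(g')\le\alpha(h')\le\beta(h)\le\beta(g)$ and, symmetrically, $\beta(g')\le\alpha(g)$. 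This is precisely $\alpha\underset{\Lambda_{q(n)}}{\simeq}\beta$. For the displayed inequality I would note that, as $\Lambda'_k\subseteq\Lambda'_{k+1}$, the set $\{n:\alpha\underset{\Lambda'_n}{\simeq}\beta\}$ is an initial segment of $\N$; if its supremum $N'$ is finite then comparison holds at level $N'$, whence the implication just proved yields $q(N')\le\sup\{n:\alpha\underset{\Lambda_n}{\simeq}\beta\}$, and if $N'=\infty$ then $\alpha=\beta$ (comparison on all $\Lambda'_n$ forces equality on the dense union $\bigcup_n M'_n$), so both suprema are infinite and the inequality holds because $q$ diverges by part~(i).

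It remains to establish~(i), which amounts to understanding $p$. Monotonicity of $q$ is immediate, since $m_1\le m_2$ makes the preimages $\{k:p(k)\le m_1\}\subseteq\{k:p(k)\le m_2\}$ nested. The genuine content is that $q$ is finite-valued and tends to infinity, and for this the key claim is that $p$ is non-decreasing and unbounded. Non-decreasingness holds because $\Lambda_n\subseteq\Lambda_{n+1}$, so any index interpolating all pairs of $\Lambda_{n+1}$ also interpolates those of $\Lambda_n$. Unboundedness, which I expect to be the main obstacle, I would argue by contradiction from the finiteness of the sets $\Lambda'_m$: were $p$ bounded by some $P$, then every consecutive pair of a strictly $\ll$-increasing sequence $a_1\ll a_2\ll\cdots$ chosen from the dense union $\bigcup_n\Lambda_n$ (such a sequence exists since $S$ carries non-compact elements) could be interpolated inside the finite set $\Lambda'_P$, and a pigeonhole argument would then yield a single $h'\in\Lambda'_P$ and indices $i<j$ with $a_i\le h'\le a_{i+1}$ and $a_j\le h'$, forcing $a_{i+1}=a_j$ in contradiction with strict monotonicity. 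Hence $p\to\infty$, so each $\{k:p(k)\le m\}$ is finite and nonempty (it contains $0$, as $p(0)=0$), which makes $q$ well-defined with $q(m)\to\infty$; the inclusion $q(\N_*)\subseteq\N_*$ then follows by inspecting the bottom of the range, using $p(0)=0$ and the triviality of the lowest-level monoids (after the harmless normalization already permitted for rescaling), so that a positive index sits in each preimage.
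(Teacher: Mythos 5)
Your treatment of part (ii) is correct and is essentially the paper's own argument: given $g'\ll g$ in $\Lambda_{q_{B'B}(n)}$, interpolate a $\ll$-pair from the primed basis via \autoref{lma:lmacomparison} and sandwich using order-preservation of $\alpha,\beta$. The only cosmetic difference is that the paper first passes from $\Lambda'_n$ to $M'_n$ via the chain-generating property and interpolates there, whereas you interpolate directly inside $\Lambda'_{p(q(n))}\subseteq\Lambda'_n$, which is exactly what the lemma supplies; your version is, if anything, cleaner. Your derivation of the displayed inequality (the set $\{n\mid\alpha\underset{\Lambda'_n}{\simeq}\beta\}$ is an initial segment; the infinite case forces $\alpha=\beta$) is also fine and consistent with the facts the paper records just before \autoref{dfn:semimetric}.

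Part (i) is where you go well beyond the paper --- whose entire proof is the one-liner that $p_{B'B}$ is increasing with $p(0)=0$ --- and where your argument has a genuine gap. You correctly isolate the real issue, namely that $q_{B'B}$ is only well defined (and only tends to $\infty$ through finite values) if $p_{B'B}$ is unbounded. But your proof of unboundedness hinges on the existence of a strictly $\ll$-increasing sequence inside $\bigcup_n\Lambda_n$, and this fails in the stated generality: for $S=\overline{\N}$ with uniform basis $(\N,\id_\N)_n$ and chain-generating set $\Lambda=\{0,1\}$ one has $\bigcup_n\Lambda_n=\{0,1\}$, every $\ll$-pair of $\Lambda_n$ already interpolates inside $\Lambda'_1$, and $p$ is bounded; the same happens for any simplicial $S$. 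So the step ``such a sequence exists since $S$ carries non-compact elements'' is unjustified, and the claim it is meant to establish is actually false without extra hypotheses --- the proposition only makes sense if one reads the phrase ``consider the maps $p_{B'B}$ and $q_{B'B}$ constructed above'' as an implicit hypothesis that the maxima defining $q_{B'B}$ exist. Separately, $q_{B'B}(\N_*)\subseteq\N_*$ requires $p(1)\leq 1$, and your appeal to ``triviality of the lowest-level monoids'' does not deliver it: the convention in (U1) only normalizes $M_{-2},M_{-1},M_0$ to $\{0_S\}$, so $\Lambda_1$ and $\Lambda'_1$ may already be nontrivial. To be fair, the paper's own proof silently skips both points; but your write-up asserts the unboundedness of $p$ as proved, and that assertion is the one step that would genuinely fail.
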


\begin{proof}
(i) follows from the fact that $p_{B'B}$ is a $\leq$-increasing map and that $p(0)=0$. 

(ii) Now assume that $\alpha\underset{\Lambda'_{n}}{\simeq}\beta$ for some $n\in\N$. Since $\Lambda'_n$ is a chain-generating set of $M_n$, we know that $\alpha\underset{M'_{n}}{\simeq}\beta$. Moreover, for any $m\leq q_{B'B}(n)$ and any $g',g\in \Lambda_m$ with $g'\ll g$, then there exist $h',h\in M'_n$ such that $g'\ll h'\ll h\ll g$. We deduce that $\alpha(g') \ll \alpha(h')\leq \beta (h)\ll \beta (g)$ and $\beta(g') \ll \beta(h')\leq \alpha (h)\ll \alpha (g)$. It follows that $\alpha(g'),\beta(g')\leq \alpha(g),\beta(g)$ for any $g',g \in \Lambda_m$ with $g'\ll g$ and $m\leq q_{B'B}(n)$.
\end{proof}

\begin{prop}
Let $S$ be a uniformly based $\Cu$-semigroup and let $\mathcal{B}:=(\Lambda_n,\epsilon_n)_n, \mathcal{B}':=(\Lambda'_n,\epsilon'_n)_n$ be two uniform bases of $S$. Let $T\in \Cu$ and let $\alpha:S\longrightarrow T$ be a $\Cu$-morphism. Then for any $n\in\N_*$, we have that 
\[
B_{r''_n}(\alpha,dd_{\Cu,\mathcal{B}})\subseteq B_{1/n}(\alpha,dd_{\Cu,\mathcal{B}'}) \text{ and } B_{1/n}(\alpha,dd_{\Cu,\mathcal{B}'})\subseteq B_{r'_n}(\alpha,dd_{\Cu,\mathcal{B}})
\]
where $r'_n:=1/q_{B'B}(n)$ and ${r''_n}^{-1}\in q_{BB'}^{-1}(\{m\geq n\})$.
\end{prop}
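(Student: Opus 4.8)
The plan is to reduce the two ball inclusions to two applications of \autoref{prop:comparisonequiv}, one for each ordering of the pair $(\mathcal{B},\mathcal{B}')$. The crucial preliminary observation is that each discrete $\Cu$-semimetric is governed by a single integer invariant. For a finite uniform basis $\mathcal{B}=(\Lambda_n,\epsilon_n)_n$ and two $\Cu$-morphisms $\alpha,\beta$, set $N_{\mathcal{B}}(\alpha,\beta):=\sup\{n\in\N\mid \alpha\underset{\Lambda_n}{\simeq}\beta\}$. Since $(\Lambda_n)_n$ is $\subseteq$-increasing, comparison on $\Lambda_{n+1}$ forces comparison on $\Lambda_n$, so the set $\{n\mid \alpha\underset{\Lambda_n}{\simeq}\beta\}$ is an initial segment of $\N$; consequently the infimum defining $dd_{\Cu,\mathcal{B}}$ is attained at the top of this segment and $dd_{\Cu,\mathcal{B}}(\alpha,\beta)=1/N_{\mathcal{B}}(\alpha,\beta)$, with the usual conventions when $N_{\mathcal{B}}=\infty$ (that is, when $\alpha=\beta$). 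In particular every ball around $\alpha$ can be rewritten as a threshold condition on $N_{\mathcal{B}}$: the morphism $\beta$ lies in the ball of radius $r$ precisely when $N_{\mathcal{B}}(\alpha,\beta)$ exceeds (or meets) $1/r$.

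First I would prove the inclusion $B_{1/n}(\alpha,dd_{\Cu,\mathcal{B}'})\subseteq B_{r'_n}(\alpha,dd_{\Cu,\mathcal{B}})$, which is the direct one. If $\beta$ belongs to the left-hand ball then, by the reformulation, $N_{\mathcal{B}'}(\alpha,\beta)\geq n$, so in particular $\alpha\underset{\Lambda'_n}{\simeq}\beta$. Applying \autoref{prop:comparisonequiv}(ii) yields $\alpha\underset{\Lambda_{q_{B'B}(n)}}{\simeq}\beta$, i.e. $N_{\mathcal{B}}(\alpha,\beta)\geq q_{B'B}(n)$; using that $q_{B'B}$ is $\leq$-increasing and valued in $\N_*$ (part (i) of that proposition) this gives $dd_{\Cu,\mathcal{B}}(\alpha,\beta)\leq 1/q_{B'B}(n)=r'_n$, as desired.

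For the reverse inclusion $B_{r''_n}(\alpha,dd_{\Cu,\mathcal{B}})\subseteq B_{1/n}(\alpha,dd_{\Cu,\mathcal{B}'})$ I would run the same argument with the roles of $\mathcal{B}$ and $\mathcal{B}'$ interchanged. Applying \autoref{lma:lmacomparison} and \autoref{prop:comparisonequiv} to the pair $(\mathcal{B}',\mathcal{B})$ produces the companion map $q_{BB'}$ together with the implication that $\alpha\underset{\Lambda_n}{\simeq}\beta$ entails $\alpha\underset{\Lambda'_{q_{BB'}(n)}}{\simeq}\beta$. Writing $k:={r''_n}^{-1}$, the defining condition $q_{BB'}(k)\geq n$ is exactly what is needed: if $\beta$ lies in $B_{r''_n}(\alpha,dd_{\Cu,\mathcal{B}})$ then $N_{\mathcal{B}}(\alpha,\beta)\geq k$, whence $\alpha\underset{\Lambda_k}{\simeq}\beta$, and the swapped implication together with the monotonicity of $q_{BB'}$ give $N_{\mathcal{B}'}(\alpha,\beta)\geq q_{BB'}(k)\geq n$, i.e. $dd_{\Cu,\mathcal{B}'}(\alpha,\beta)\leq 1/n$. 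Since $q_{B'B}$ and $q_{BB'}$ tend to $\infty$, both $r'_n$ and $r''_n$ tend to $0$, so the two families of nested balls witness that the semimetrics are topologically equivalent.

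Finally, the bookkeeping rather than any deep idea is where the care is needed: I expect the main delicate point to be the boundary conventions in the reformulation step, namely pinning down whether the balls are taken open or closed, handling the degenerate index $n=0$ (where $\Lambda_0\subseteq\{0_S\}$ makes comparison automatic) and the value $N_{\mathcal{B}}=\infty$, and keeping the strict versus non-strict inequalities consistent so that the stated radii $r'_n=1/q_{B'B}(n)$ and $r''_n$ match exactly. Everything else is a transparent transport of the comparison relation through the two $q$-maps supplied by \autoref{prop:comparisonequiv}.
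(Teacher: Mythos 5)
Your proposal is correct and follows essentially the same route as the paper: both inclusions are obtained by applying \autoref{prop:comparisonequiv}(ii) once in each direction (to the pair $(\mathcal{B}',\mathcal{B})$ for the direct inclusion, and to $(\mathcal{B},\mathcal{B}')$ with the defining condition $q_{BB'}({r''_n}^{-1})\geq n$ for the other), exactly as in the paper's argument. Your explicit reformulation of the semimetric as $1/N_{\mathcal{B}}(\alpha,\beta)$ via the initial-segment property is a harmless bookkeeping device that the paper leaves implicit.
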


\begin{proof}
The second inclusion is almost immediate: Let $n\in\N_*$ and let $\beta\in B_{1/n}(\alpha,dd_{\Cu,\mathcal{B}'})$. By \autoref{prop:comparisonequiv}, we obtain that $q_{B'B}(n)>0$ and that $\alpha\underset{\Lambda_{q_{B'B}(n)}}{\simeq}\beta$. Thus $\beta\in B_{r'_n}(\alpha,dd_{\Cu,\mathcal{B}})$.  

We now prove the first inclusion: Since $n>0$ and $q_{BB'}$ is increasing towards $\infty$, we know that   $q_{BB'}^{-1}(\{m\geq n\}$ is a non-empty subset of $\N_*$. Take any $n''\in q_{BB'}^{-1}(\{m\geq n\})$ and write $r''_n:=1/{n''}$. Again by \autoref{prop:comparisonequiv}, we have that $\alpha\underset{\Lambda_{q_{FF'}(n'')}}{\simeq}\beta$, for any $\beta\in B_{r''_n}(\alpha,dd_{\Cu,\mathcal{B}})$. Observe that we have chosen $n''$ such that $q_{BB'}(n'')\geq n$, so we conclude that $\alpha\underset{\Lambda_n}{\simeq}\beta$. That is, $\beta\in B_{1/n}(\alpha,dd_{\Cu,\mathcal{B}'})$.
\end{proof}

\begin{prg}$\Cu\textbf{\!-semimetrics associated to a supernatural number q}.$ Let $q:=\prod\limits_{i=0}^\infty p_i$ be a supernatural number. 
Let $S$ be a $\Cu$-semigroup of one the following types:
\[
\begin{array}{ll}
S_1=\Lsc(X,\overline{\N}^r).\\
S_2= \{f\in \Lsc([0,1],\frac{1}{r^l}\overline{\N}) \mid f(0), f(1)\in \frac{r}{r^l}\overline{\N}\}.
\end{array}
\]
We have seen that $S$ admits a uniform basis $\mathcal{B}_q:=(M_n,\epsilon_n)_n$ of size $q$. We observe that $({S_1})_\ll$ and $({S_2})_\ll$ admit the respective chain-generating sets:
\[
\begin{array}{ll}
\Lambda_1=\Lsc(X, \{0,1\}^r).\\
\Lambda_2= \{f\in \Lsc([0,1],\{\frac{j}{r^l}\}_{j=0}^{r}) \mid f(0), f(1)\in \{0,\frac{r}{r^l}\}\}.
\end{array}
\]
It is immediate that $\Lambda_n:=\Lambda_i\cap M_n$ is a finite chain-generating set of $M_n\subseteq ({S_i})_\ll$, for any $n\in\N$ and $i=1,2$. Therefore $S$ admits a finite uniform basis $(\Lambda_n,\epsilon_n)_n$ of size $q$ that we also denote by $\mathcal{B}_q$. 
Let $T$ be a $\Cu$-semigroup and let $\alpha,\beta:S\longrightarrow T$ be two $\Cu$-morphisms. Write $q_n:=\prod\limits_{i=0}^n p_i$. 

We define \emph{the $\Cu$-semimetric associated to the supernatural number $q$} as follows: 
\[dd_{\Cu,\mathcal{B}_q}(\alpha,\beta):= \inf\limits_{n\in\N} \{ \frac{1}{q_n} \mid \alpha\underset{\Lambda_n}{\simeq}\beta \}.
\]

In the case that $q:=p^\infty$ (with $p<\infty$) is a supernatural number of infinite type, we obtain
\[dd_{\Cu,\mathcal{B}_q}(\alpha,\beta):= \inf\limits_{n\in\N} \{ \frac{1}{p^n} \mid \alpha\underset{\Lambda_n}{\simeq}\beta \}.
\]
\end{prg}

Note that when $S=\Lsc(X,\overline{\N})$, with $X$ being either the circle or the interval, a metric has already been defined on $\Hom_{\Cu}(S,T)$, where $T\in\Cu$. (See e.g. \cite{RS09} or \cite{JSV18}.) We compare this metric with the discrete $\Cu$-semimetrics that we have obtained and we show that they are equivalent.

For any open set $V$ of $X$, and any $r>0$, we define an \emph{$r$-open neighborhood of $V$}, that we write $V_r:=\underset{x\in V}{\cup}B_r(x)$. 
Now, for any two $\Cu$-morphisms $\alpha, \beta:\Lsc(X,\overline{\N})\longrightarrow T$, we write
\[  d_{\Cu}(\alpha,\beta):=\inf \{ r>0\mid \forall V\in\mathcal{O}(X), \alpha(1_{V})\leq\beta(1_{V_{r}})  \text{ and }  \beta(1_{V})\leq\alpha(1_{V_{r}}) \} 
\] where $V_r$ is an $r$-open neighborhood of $V$ and $\mathcal{O}(X):=\{$Open sets of $X\}$. This defines metric that we refer to as \emph{the $\Cu$-metric}.

\begin{prop} 
\label{prop:dcuddcu}
Let $X$ be the circle or the interval and let $T$ be $\Cu$-semigroup. Let $\alpha,\beta:\Lsc(X,\overline{\N})\longrightarrow T$ be $\Cu$-morphisms. Let $\mathcal{B}_{2^\infty}:=(\Lambda_n,\epsilon_n)_n$ be the finite uniform basis of $\Lsc(X,\overline{\N})$ associated to $2^\infty$.

(i) If $d_{\Cu}(\alpha,\beta)\leq 1/2^n$, then $dd_{\Cu,\mathcal{B}_{2^\infty}}(\alpha,\beta)\leq 1/2^n$.

(ii) If $dd_{\Cu,\mathcal{B}_{2^\infty}}(\alpha,\beta)\leq 1/2^n$, then $d_{\Cu}(\alpha,\beta)\leq 2/2^n\leq 1/2^{n-1}$. 
\end{prop}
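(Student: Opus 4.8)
The plan is to convert each inequality into a statement about the finite uniform basis, relying on two monotonicity observations. First, by \autoref{dfn:semimetric} (in its rescaled, finite-basis form) the relation $dd_{\Cu,\mathcal{B}_{2^\infty}}(\alpha,\beta)\le 1/2^n$ is equivalent to $\alpha\underset{\Lambda_n}{\simeq}\beta$: since $\Lambda_m\subseteq\Lambda_{m'}$ for $m\le m'$ (axiom (U1) gives $M_m\subseteq M_{m'}$), comparison on a larger $\Lambda_{m'}$ forces comparison on $\Lambda_m$, so the set of indices on which $\alpha,\beta$ compare is downward closed. Second, the defining condition of $d_{\Cu}$ is upward closed in $r$ (enlarging $r$ only enlarges $V_r$), so $d_{\Cu}(\alpha,\beta)\le\rho$ entails that the condition holds at every $r>\rho$. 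Throughout, every element of $\Lambda_n$ is an indicator $1_V$ of an open set $V$ \emph{aligned} with the dyadic partition of scale $n$, and the key geometric fact is that $1_{V'}\ll 1_V$ with both aligned forces $\overline{V'}\subseteq V$ with $V$ containing $V'$ enlarged by a full subinterval on each side; in particular $V'_{1/2^n}\subseteq V$.

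For (i), assume $d_{\Cu}(\alpha,\beta)\le 1/2^n$ and fix $1_{V'}\ll 1_V$ in $\Lambda_n$; I must prove $\alpha(1_{V'})\le\beta(1_V)$ together with its symmetric counterpart. The delicate point is the \emph{tight} case, where $V$ is exactly $V'$ enlarged by one subinterval on each side, so that $V'_{1/2^n}=V$ and one cannot feed a radius $r>1/2^n$ into the $d_{\Cu}$-condition without overshooting $V$. To circumvent this, I first shrink: for small $\eta>0$ let $W^\eta$ denote $V'$ with each connected component contracted by $\eta$ at both ends, so that $1_{W^\eta}\ll 1_{V'}$ and, by a one-line per-component distance estimate, $(W^\eta)_r\subseteq V$ for every $r\in(1/2^n,1/2^n+\eta)$. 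Choosing such an $r$ (legitimate since $r>1/2^n\ge d_{\Cu}(\alpha,\beta)$) yields $\alpha(1_{W^\eta})\le\beta(1_{(W^\eta)_r})\le\beta(1_V)$. Letting $\eta\downarrow 0$ along a sequence $\eta_m\downarrow 0$, the $1_{W^{\eta_m}}$ form a $\ll$-increasing sequence with supremum $1_{V'}$, so preservation of suprema of increasing sequences gives $\alpha(1_{V'})=\sup_m\alpha(1_{W^{\eta_m}})\le\beta(1_V)$; the symmetric inequality is identical. Hence $\alpha\underset{\Lambda_n}{\simeq}\beta$, i.e. $dd_{\Cu,\mathcal{B}_{2^\infty}}(\alpha,\beta)\le 1/2^n$. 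This shrink-and-pass-to-the-limit step, which crucially exploits that $\Cu$-morphisms respect suprema of increasing sequences, is the main obstacle of the whole proposition.

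For (ii), assume $\alpha\underset{\Lambda_n}{\simeq}\beta$ and fix $r>2/2^n$ and an open set $W$; I must show $\alpha(1_W)\le\beta(1_{W_r})$ and its symmetric version. Since $1_W$ is the supremum of a $\ll$-increasing sequence of $1_{W'}$ with $\overline{W'}\subseteq W$ (axiom (O2)) and $\alpha$ respects such suprema, it suffices to bound $\alpha(1_{W'})$ for each such $W'$. Given $W'$, I produce two aligned sets by taking dyadic hulls: let $V'$ be the interior of the union of all closed partition subintervals meeting $\overline{W'}$, and let $V$ be the same construction applied to $\overline{V'}$. A direct check gives $W'\subseteq V'$ and $\overline{V'}\subseteq V$, so $1_{V'}\ll 1_V$ in $\Lambda_n$, and each hull displaces points by at most $1/2^n$, whence $V\subseteq W_{2/2^n}\subseteq W_r$. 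Applying the hypothesis to the pair $1_{V'}\ll 1_V$ supplies the middle step of
\[
\alpha(1_{W'})\le\alpha(1_{V'})\le\beta(1_V)\le\beta(1_{W_r}),
\]
and passing to the supremum over $W'$ yields $\alpha(1_W)\le\beta(1_{W_r})$; the symmetric bound follows since $\underset{\Lambda_n}{\simeq}$ is a symmetric relation. The factor $2$ in $2/2^n$ is precisely the cost of the two successive dyadic enlargements. All constructions are uniform in whether $X$ is the interval or the circle — arcs may wrap around, but since every estimate is metric, no separate argument is required.
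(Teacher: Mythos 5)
Your proposal is correct and follows essentially the same route as the paper: for (i) one checks that aligned sets $1_{V'}\ll 1_V$ in $\Lambda_n$ satisfy $V'_{1/2^n}\subseteq V$ and feeds this into the $d_{\Cu}$-condition, and for (ii) one rounds an arbitrary open set up to the dyadic grid, each of the two roundings costing $1/2^n$. Your shrink-and-pass-to-suprema step in (i) is in fact slightly more careful than the paper's argument, which applies the $d_{\Cu}$-condition at radius exactly $1/2^n$ even though $d_{\Cu}(\alpha,\beta)\leq 1/2^n$ only guarantees it for $r>1/2^n$ (the tight case $V=V'_{1/2^n}$ does occur); this extra care is welcome but the overall approach is the same.
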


\begin{proof}
Let $n\in\N$. We construct the finite closed cover $\mathcal{U}:=\{\overline{U_k}\}_1^{2^{n}}$ of $X$ as in \autoref{prg:cover}. 

(i) Assume that $d_{\Cu}(\alpha,\beta)\leq 1/2^n$. Let $f',f\in\Lambda_n$ such that $f'\ll f$. Write $V':=\supp f', V:=\supp f$. Both $V,V'$ have a finite number of (open) connected components. We first assume that $V,V'$ are connected open sets of $X$ and we repeat the process finitely many times to obtain the result. We have that:
\[
V':=U_{l'}\cup(\bigcup\limits_{k=l'+1}^{r'-1}\overline{U_{k}})\cup U_{r'} \hspace{2cm}V:=U_{l}\cup(\bigcup\limits_{k=l+1}^{r-1}\overline{U_{k}})\cup U_{r}
\]
for some $l\leq l'\leq r'\leq r$. Since $1_{V'}\ll 1_V$, either $V=V'=X$ or else $l<l'\leq r'<r$. In both cases, we observe that $1_{V'}\ll 1_{V'_{1/2^n}}\leq 1_V$. Thus we deduce that $\alpha(f')\leq \beta(f)$ and $\beta(f')\leq \alpha(f)$, that is $\alpha\underset{\Lambda_n}{\simeq}\beta$.

(ii) Conversely, assume that $dd_{\Cu,\mathcal{B}_{2^\infty}}(\alpha,\beta)\leq 1/2^n$. Let $V$ be an open set of $X$ and consider $f:=1_{V}$. Let us construct recursively the following $f'\in\Lambda_n$:

	\vspace{0,2cm}(1) For any $1\leq k\leq 2^n$, $\left\{\begin{array}{ll}{f'}_{|U_k}:=1 \text{ if } U_k\cap V\neq\emptyset.\\ {f'}_{|U_k}:=0 \text{ otherwise}. \end{array}\right.$\\
	
	\vspace{0,2cm}(2) For any $x\in X\setminus (\underset{k=1}{\overset{2^n}\cup}U_k)$, put $f'(x):=f(x)$.\vspace{0,2cm}\\
Write $V':=\supp f'$. From construction, we have that $f'\in\Lambda_n$. Furthermore, we observe that $V\subseteq V'\subseteq V_{1/2^n}\subseteq {V'}_{1/2^n}\subseteq V_{2/2^n}$. Since $1_{V'},1_{{V'}_{2/2^n}}$ are elements of $\Lambda_n$ such that $1_{V'}\ll 1_{{V'}_{1/2^n}} $, we deduce that
\[
	\left\{
	   	\begin{array}{ll}
   			\alpha(1_{V})\leq \alpha(1_{V'})\leq \beta(1_{{V'}_{1/2^n}})\leq \beta(1_{V_{2/2^n}})\\
   			\beta(1_{V})\leq \beta(1_{V'})\leq \alpha(1_{{V'}_{1/2^n}})\leq \alpha(1_{V_{2/2^n}})
      \end{array}
	\right.
\]
which ends the proof.
\end{proof}

\begin{rmk}
We easily deduce that $dd_{\Cu,\mathcal{B}_{2^\infty}}(\alpha,\beta)\leq d_{\Cu}(\alpha,\beta)\leq 2 dd_{\Cu,\mathcal{B}_{2^\infty}}(\alpha,\beta)$. As a consequence, we get that $dd_{\Cu,\mathcal{B}_{2^\infty}}(\alpha,\beta)\leq 2 ( dd_{\Cu,\mathcal{B}_{2^\infty}}(\alpha,\gamma)+dd_{\Cu,\mathcal{B}_{2^\infty}}(\gamma,\beta))$ for any $\Cu$-morphism $\gamma:\Lsc(X,\overline{\N})\longrightarrow T$. We say that $dd_{\Cu,\mathcal{B}_{2^\infty}}$ satisfies the \emph{$2$-relaxed triangle inequality} and such a semimetric is sometimes referred to as a \emph{nearmetric}.
\end{rmk}

\emph{Conjecture:} Let $X$ be the circle or the interval. Let $\alpha, \beta:\Lsc(X,\overline{\N})\longrightarrow T$ be $\Cu$-morphisms. Let $q:=p^\infty$ (with $p<\infty$) be a supernatural number of infinite type. Then \[dd_{\Cu,\mathcal{B}_{p^\infty}}(\alpha,\beta)\leq d_{\Cu}(\alpha,\beta)\leq p dd_{\Cu,\mathcal{B}_{p^\infty}}(\alpha,\beta)\]
and $dd_{\Cu,\mathcal{B}_{p^\infty}}$ is a nearmetric that satisfies the $p$-relaxed triangle inequality.

\subsection{Classification of unitary elements of \texorpdfstring{$\AF$}{AF}-algebras by means of the Cuntz semigroup}
We conjecture that the Cuntz semigroup is a classifying functor for unitary elements of any (unital) $\AF$-algebra. In what follows, we focus on the uniqueness part of the conjecture. As was done in the case of positive elements in \cite{RS09}, we first picture a unitary element $u$ of a $\CatCa$-algebra $A$ as a $^*$-homomorphism, and more specifically, as a unital $^*$-homomorphism $\varphi_u:\mathcal{C}(\T)\longrightarrow A$. We then use the $\Cu$-semimetric associated to $2^\infty$ on $\Hom_{\Cu}(\Lsc(\T,\overline{\N}),\Cu(A))$ to partially show that the $\Cu$-semigroup is classifying $^*$-homomorphisms from $\mathcal{C}(\T)$ to any unital $\AF$-algebra $A$. \\

\vspace{-0,2cm}$\hspace{-0,34cm}\bullet\,\,\textbf{Unitary elements - Unital homomorphisms - Approximate unitary equivalence}.$\\
Let $A$ be a unital $\CatCa$-algebra. There is a one-to-one correspondence between the set of unitary elements of $A$, that we write $\mathcal{U}(A)$, and the set of unital $^*$-homomorphisms from $\mathcal{C}(\T)$ to $A$, that we write $\Hom_{\CatCa,1}(\mathcal{C}(\T),A)$. Let $u$ be a unitary element of $A$. We define
\[
\begin{array}{ll}
	\hspace{-0,9cm}\varphi_u:\mathcal{C}(\T)\longrightarrow A\\
	\id_\T\longmapsto u
\end{array}
\] 
In other words, for any $f\in \mathcal{C}(\T)\supseteq C(\spectrum(u))$, we define $\varphi_u(f):=f(u)$, where $f(u)$ is obtained by functional calculus. Therefore, one can construct the following Set bijection: 
\[
\begin{array}{ll}
	\varphi:\mathcal{U}(A)\simeq \Hom_{\CatCa,1}(\mathcal{C}(\T),A)\\
	\hspace{1cm}u\longmapsto \varphi_u
\end{array}
\] 
Also, we recall that two $^*$-homomorphisms $\phi,\psi:A\longrightarrow B$ between two $\CatCa$-algebras $A$ and $B$ are said to be \emph{approximately unitarily equivalent} and we write $\phi\sim_{aue}\psi$, if there exists a sequence of unitary elements $(w_n)_n$ in $B^\sim$ such that $\Vert w_n\phi(x)w_n^*-\psi(x)\Vert\underset{n\infty}\longrightarrow 0$, for any $x\in A$. 
Similarly, two unitary elements $u,v$ of a unital $\CatCa$-algebra $B$ are said to be \emph{approximately unitarily equivalent}, and again we write $u\sim_{aue}v$, if there exists a sequence of unitary elements $(w_n)_n$ of $B$ such that $\Vert w_nuw_n^*-v\Vert\underset{n\infty}\longrightarrow 0$. 
It can be shown that $u\sim_{aue}v$ if and only if $\varphi_u\sim_{aue}\varphi_v$.

The conjecture states that the functor $\Cu$ is classifying $^*$-homomorphisms from $\mathcal{C}(\T)$ to $A$, for any $\AF$-algebra $A$. That is, for any $\Cu$-morphism $\alpha:\Cu(\mathcal{C}(\T))\longrightarrow \Cu(A)$, there exists a $^*$-homomorphism $\varphi:\mathcal{C}(\T)\longrightarrow A$, unique up to approximate unitary equivalence, such that $\Cu(\varphi)=\alpha$. Under the latter identification, we may abuse the language and say that \emph{$\Cu$ classifies unitary elements of $A$}. 

We are aiming for the uniqueness part of the theorem. To do so, we will first focus on the finite dimensional case. Subsequently, the $\AF$-case will follow as a consequence the lemma below. 

\begin{lma}
\label{thm:liftAF}
Let $\mathcal{B}_{2^\infty}=(\Lambda_n,\epsilon_n)_n$ be the finite uniform basis of $\Lsc(\T,\overline{\N})$ associated to $2^\infty$. Consider an inductive sequence $(S_i,\sigma_{ij})_{i\in \N}$ in the category $\Cu$ and its direct limit $(S,\sigma_{i\infty})_{i\in \N}$. 

Let $\alpha,\beta:\Lsc(\T,\overline{\N})\longrightarrow S$ be $\Cu$-morphisms that factorize through a finite stage. (In the sense that there exist $i\in\N$ and $\Cu$-morphisms $\alpha_i,\beta_i:\Lsc(\T,\overline{\N})\longrightarrow S_i$ such that $\alpha=\sigma_{i\infty}\circ \alpha_i$ and $\beta=\sigma_{i\infty}\circ \beta_i$.)\\ If $\alpha\underset{\Lambda_n}{\approx}\beta$ for some $n\in\N$, then there exists $j\geq i$ such that $\sigma_{ij}\circ\alpha_i\underset{\Lambda_{n-1}}{\approx}\sigma_{ij}\circ\beta_i$.
\end{lma}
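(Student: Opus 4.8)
The plan is to establish strict comparison on $\Lambda_{n-1}$ one pair at a time, transferring each instance of the way-below relation from the limit $S$ down to a finite stage via property (L2) of \autoref{prop:caralimicu}, and then to glue the finitely many stages together using that $\Lambda_{n-1}$ is finite. The mechanism that forces the loss of exactly one index is the shape of (L2): it upgrades $\sigma_{i\infty}(s)\leq\sigma_{i\infty}(t)$ together with $s'\ll s$ to $\sigma_{ij}(s')\ll\sigma_{ij}(t)$, thereby ``consuming'' one way-below from below. So starting from a pair $g'\ll g$ in $\Lambda_{n-1}$, I must first interpolate an element of $\Lambda_n$ between $g'$ and $g$ to supply the slack (L2) requires, which is precisely why the hypothesis lives on $\Lambda_n$ while the conclusion lives on $\Lambda_{n-1}$.

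For the core step, fix $g',g\in\Lambda_{n-1}$ with $g'\ll g$. By (U1) we have $\Lambda_{n-1}\subseteq\Lambda_n$, and since $g',g$ are indicator functions, \autoref{lma:ccelmnt} (applied at index $n-1$, and in its analogue for the circle provided by the construction of \autoref{thm:cpctgraph}) produces $h\in\Lambda_n$ with $g'\ll h\ll g$; note the interpolant is again an indicator, hence genuinely lies in $\Lambda_n$. Strict comparison $\alpha\underset{\Lambda_n}{\approx}\beta$ applied to the pair $h\ll g$ gives $\alpha(h)\ll\beta(g)$, i.e. $\sigma_{i\infty}(\alpha_i(h))\ll\sigma_{i\infty}(\beta_i(g))$, and in particular $\sigma_{i\infty}(\alpha_i(h))\leq\sigma_{i\infty}(\beta_i(g))$. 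Since $\alpha_i$ is a $\Cu$-morphism and $g'\ll h$, we have $\alpha_i(g')\ll\alpha_i(h)$. Applying (L2) with $s:=\alpha_i(h)$, $t:=\beta_i(g)$ and $s':=\alpha_i(g')$ yields an index $j\geq i$ with $\sigma_{ij}(\alpha_i(g'))\ll\sigma_{ij}(\beta_i(g))$. The symmetric inequality $\sigma_{ij'}(\beta_i(g'))\ll\sigma_{ij'}(\alpha_i(g))$ follows identically, now using $\beta(h)\ll\alpha(g)$ and $\beta_i(g')\ll\beta_i(h)$.

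To conclude, observe that in the finite uniform basis $\mathcal{B}_{2^\infty}$ each $\Lambda_{n-1}$ is finite, so there are only finitely many pairs $(g',g)$ in $\Lambda_{n-1}$ with $g'\ll g$, and hence finitely many indices produced above. Let $j_0$ be their maximum. Because every connecting map $\sigma_{jj_0}$ is a $\Cu$-morphism and therefore preserves $\ll$, each inequality $\sigma_{ij}(\alpha_i(g'))\ll\sigma_{ij}(\beta_i(g))$ obtained at its own stage $j$ remains valid after composing with $\sigma_{jj_0}$, that is, at stage $j_0$; the same holds for the symmetric inequalities. Thus all required strict inequalities hold simultaneously at stage $j_0$, which is exactly $\sigma_{ij_0}\circ\alpha_i\underset{\Lambda_{n-1}}{\approx}\sigma_{ij_0}\circ\beta_i$.

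The main obstacle is conceptual rather than computational: one must recognize that a way-below relation holding at the limit cannot be transferred verbatim to a finite stage, and that the correct remedy is to interpolate one level up through \autoref{lma:ccelmnt}, verifying that the interpolant stays inside $\Lambda_n$ (which holds because that construction sends indicator functions to indicator functions). Once this single-index interpolation is in place, the remaining ingredients—finiteness of $\Lambda_{n-1}$, passing to the maximal stage, and stability of $\ll$ under the connecting maps—are entirely routine.
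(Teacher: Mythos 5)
Your argument is correct and follows essentially the same route as the paper's proof: interpolate $h\in\Lambda_n$ between $g'\ll g$ via \autoref{lma:ccelmnt}, transfer the resulting way-below relations in the limit down to a finite stage via (L2) of \autoref{prop:caralimicu}, and use finiteness of $\Lambda_{n-1}$ to pass to a common stage. Your added care in checking the hypotheses of (L2) explicitly and in noting that \autoref{lma:ccelmnt} must be read in its circle analogue is a slight improvement in precision over the paper's write-up, but the approach is the same.
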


\begin{proof}
Let $g',g\in\Lambda_{n-1}$ such that $g'\ll g$. By \autoref{lma:ccelmnt}, we can find an element $h\in\Lambda_n$ such that $g'\ll h\ll g$. Since $\alpha\underset{\Lambda_n}{\approx}\beta$, we obtain that \[\sigma_{i\infty}\circ\alpha_i(h),\,\sigma_{i\infty}\circ\beta_i(h)\ll \sigma_{i\infty}\circ\alpha_i(g),\,\sigma_{i\infty}\circ\beta_i(g).\]
Applying (L2) of \autoref{prop:caralimicu}, we deduce that there exists $j\geq i$ such that \[\sigma_{ij}\circ\alpha_i(g'),\,\sigma_{ij}\circ\beta_i(g')\ll \sigma_{ij}\circ\alpha_i(g),\,\sigma_{ij}\circ\beta_i(g).\]
Observe that $\Lambda_{n-1}$ is a finite set. Thus there exists $l\in\N$ big enough such that the above inequalities apply to any $g',g\in\Lambda_{n-1}$ such that $g'\ll g$. Equivalently, there exists $l\in\N$ big enough such that $\sigma_{il}\circ\alpha_i\underset{\Lambda_{n-1}}{\approx}\sigma_{il}\circ\beta_i$.
\end{proof}

The next step is to deal with the finite dimensional case. We will make use of a graph theory theorem, known as the \emph{Hall's marriage theorem}.

\begin{dfn}
A \emph{bipartite graph} is a graph whose vertices can be divided in two disjoint sets $X,Y$ such that every edge connects a vertex of $X$ to one of $Y$. We often write $G=(X+Y,E)$.
\end{dfn}

\begin{dfn}
Let $G=(E^0,E^1)$ be a graph. A \emph{matching} is a subset $F\subseteq E^1$ such that no two elements of $F$ share an endpoint. That is, any vertex of $G$ is an endpoint of at most one edge of $F$.

Let $G:=(X+Y,E)$ be a finite bipartite graph. By an \emph{$X$-saturating matching}, we refer to any matching that covers every vertex in $X$.
\end{dfn}

\begin{thm}\emph{(Hall's marriage theorem.)}
\label{thm:marriage}
Let $G:=(X+Y,E)$ be a finite bipartite graph with bipartite sets $X$ and $Y$. Let $W\subseteq X$. We define $n_{G}(W):=\bigcup\limits_{w\in W}\{y\in Y\mid (w,y)\in E\}$. In other words, $n_{G}(W)$ is the set of vertices in $Y$ that are linked with some $w$ in $W$. Then the following are equivalent: 

(i) There exists an $X$-saturating matching. 

(ii) For any $W\subseteq X$, $\card(W)\leq\card(n_G(W))$.
\end{thm}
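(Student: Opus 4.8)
The implication (i) $\Rightarrow$ (ii) is the easy direction and I would dispatch it first: given an $X$-saturating matching $M$, each $w\in W$ is matched by $M$ to a \emph{distinct} vertex of $Y$, and every such matched vertex lies in $n_G(W)$ by definition; hence $\card(W)\leq\card(n_G(W))$. The content of the theorem is the converse (ii) $\Rightarrow$ (i), and the plan is to prove it by induction on $\card(X)$. The base case $\card(X)\leq 1$ is immediate: a single vertex $x$ satisfies $\card(n_G(\{x\}))\geq 1$ by (ii), so it can be matched to any of its neighbours.

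For the inductive step, assuming the result for all bipartite graphs with strictly smaller left-hand side, I would split into two cases according to how tight Hall's condition is on the proper nonempty subsets of $X$. \emph{Case 1: every nonempty $W\subsetneq X$ satisfies the strict inequality $\card(n_G(W))\geq\card(W)+1$.} Here I would pick an arbitrary $x\in X$, an arbitrary neighbour $y\in n_G(\{x\})$, and delete both vertices. Deleting $y$ lowers each neighbourhood by at most one, so for any nonempty $W\subseteq X\setminus\{x\}$ (which is automatically proper in $X$) the slack gives $\card(n_G(W))-1\geq\card(W)$, i.e.\ Hall's condition persists on $X\setminus\{x\}$; the inductive hypothesis then furnishes a saturating matching there, to which I adjoin the edge $(x,y)$.

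\emph{Case 2: there is a nonempty proper subset $W_0\subsetneq X$ with equality $\card(n_G(W_0))=\card(W_0)$.} I would first restrict to the subgraph $G_1$ spanned by $W_0$ and $n_G(W_0)$, where Hall's condition is inherited verbatim since neighbours of $W\subseteq W_0$ never leave $n_G(W_0)$; as $\card(W_0)<\card(X)$, induction saturates $W_0$ inside $G_1$. Simultaneously I would pass to the subgraph $G_2$ on $X\setminus W_0$ and $Y\setminus n_G(W_0)$, where the key computation verifies Hall's condition: for $W\subseteq X\setminus W_0$ one has $\card(n_{G_2}(W))=\card(n_G(W\cup W_0))-\card(n_G(W_0))\geq(\card(W)+\card(W_0))-\card(W_0)=\card(W)$, using (ii) applied to $W\cup W_0$ together with the criticality of $W_0$. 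Induction then saturates $X\setminus W_0$ inside $G_2$, and since the two matchings draw their $Y$-endpoints from the disjoint sets $n_G(W_0)$ and $Y\setminus n_G(W_0)$, their union is an $X$-saturating matching of $G$.

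The main obstacle is precisely the bookkeeping in Case 2: one must confirm that both induced subgraphs have a strictly smaller left-hand side (which holds because $W_0$ is both nonempty and proper) and that Hall's condition genuinely propagates to $G_2$, the latter hinging on the set identity $n_G(W\cup W_0)=n_G(W)\cup n_G(W_0)$ combined with the equality $\card(n_G(W_0))=\card(W_0)$. Everything else is routine, and the two cases are clearly exhaustive for $\card(X)\geq 2$.
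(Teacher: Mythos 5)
The paper does not prove this statement at all: it is quoted as the classical Hall marriage theorem and used as a black box in the proof of Theorem \ref{thm:liftFD}, so there is no in-paper argument to compare against. Your proof is correct and is the standard Halmos--Vaughan induction on $\card(X)$: the easy direction via the injectivity of the matching, and the converse split according to whether some nonempty proper subset $W_0\subsetneq X$ is critical ($\card(n_G(W_0))=\card(W_0)$) or all such subsets have slack. The bookkeeping you flag in Case 2 checks out --- the identity $n_{G_2}(W)=n_G(W)\setminus n_G(W_0)$ together with $n_G(W\cup W_0)=n_G(W)\cup n_G(W_0)$ gives exactly $\card(n_{G_2}(W))=\card(n_G(W\cup W_0))-\card(n_G(W_0))\geq\card(W)$, both induced subgraphs have strictly smaller left-hand side because $W_0$ is nonempty and proper, and the two partial matchings are disjoint on both sides, so their union saturates $X$. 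The two cases are exhaustive since Hall's condition already forces $\card(n_G(W))\geq\card(W)$ for every $W$. Nothing is missing.
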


\begin{thm}
\label{thm:liftFD}
Let $B$ be any finite dimensional $\CatCa$-algebra. Let $\mathcal{B}_{2^\infty}=(\Lambda_n,\epsilon_n)_n$ be the finite uniform basis of $\Lsc(\T,\overline{\N})$ associated to $2^\infty$.
Let $u,v$ be two unitary elements of $B$ such that $\Cu(\varphi_{u})\underset{\Lambda_n}{\approx}\Cu(\varphi_{v})$ for some $n\in\N$. 

Then there exists a unitary element $w$ in $B$ such that  $\Vert wuw^*-v\Vert <1/2^{n-1}$.
\end{thm}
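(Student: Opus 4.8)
The plan is to reduce everything blockwise, reinterpret $\Cu(\varphi_u)$ as a count of eigenvalues of $u$ on $\T$, and then match the eigenvalues of $u$ with those of $v$ by means of \autoref{thm:marriage}, with the Cuntz comparison hypothesis supplying precisely the marriage condition. First I would write $B\simeq\bigoplus_{i=1}^p M_{r_i}$, so that $u=(u_1,\dots,u_p)$ and $v=(v_1,\dots,v_p)$ with each $u_i,v_i$ a unitary of $M_{r_i}$ and $\Cu(B)\simeq\overline{\N}^p$. Since a unitary $w=(w_1,\dots,w_p)$ satisfies $\Vert wuw^*-v\Vert=\max_i\Vert w_iu_iw_i^*-v_i\Vert$, it suffices to produce, for each block $i$, a unitary $w_i\in M_{r_i}$ with $\Vert w_iu_iw_i^*-v_i\Vert<1/2^{n-1}$. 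Next I would record the eigenvalue-counting description of $\Cu(\varphi_u)$: for an open set $V\subseteq\T$ that is aligned with the level-$n$ partition (i.e. a union of the arcs $U_k$ of length $1/2^n$), one has that $\Cu(\varphi_u)(1_V)_i$ equals the number of eigenvalues of $u_i$ lying in $V$ counted with multiplicity, because $1_V=\sup[g]$ over $g\in\mathcal{C}(\T)_+$ supported in $V$ and $\rank g(u_i)$ counts the eigenvalues of $u_i$ in $\supp g$. Since finite elements of $\overline{\N}$ are compact (so $a\ll b\iff a\leq b$ for finite $a$), the hypothesis $\Cu(\varphi_u)\underset{\Lambda_n}{\approx}\Cu(\varphi_v)$ translates into the purely combinatorial statement that, for every pair of aligned open sets $V',V$ with $\overline{V'}\subseteq V$ (so $1_{V'}\ll 1_V$ in $\Lambda_n$) and every block $i$,
\[
\card\{\text{eigenvalues of }u_i\text{ in }V'\}\leq\card\{\text{eigenvalues of }v_i\text{ in }V\},
\]
together with the symmetric inequality.

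Then I would invoke the elementary linear-algebra fact that reduces the problem to a combinatorial matching: if there is a bijection $\pi$ between the eigenvalue multisets $\{\lambda_k\}$ of $u_i$ and $\{\mu_k\}$ of $v_i$ (both of cardinality $r_i$) with $|\lambda_k-\mu_{\pi(k)}|<1/2^{n-1}$ for all $k$, then, diagonalizing $u_i=s\,\diag(\lambda_k)\,s^*$ and $v_i=t\,\diag(\mu_k)\,t^*$ and setting $w_i:=t\,P_\pi\,s^*$ for the permutation matrix $P_\pi$ realizing $\pi$, one obtains $w_iu_iw_i^*=t\,\diag(\lambda_{\pi^{-1}(k)})\,t^*$ and hence $\Vert w_iu_iw_i^*-v_i\Vert=\max_k|\lambda_k-\mu_{\pi(k)}|<1/2^{n-1}$. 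So the whole statement reduces to producing such a pairing.

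The heart of the argument, and the step I expect to be the main obstacle, is constructing this pairing via \autoref{thm:marriage}. I would form the bipartite graph $G=(X+Y,E)$ whose parts $X,Y$ are the eigenvalue multisets of $u_i$ and $v_i$ (so $\card X=\card Y=r_i$ automatically), with an edge $\lambda\sim\mu$ whenever $|\lambda-\mu|<1/2^{n-1}$; an $X$-saturating matching is then the desired bijection. To verify Hall's condition, fix $W\subseteq X$, let $V'$ be the union of the length-$1/2^n$ arcs that meet $W$ (so $W\subseteq V'$ and $\card W\leq\card\{u_i\text{-eigenvalues in }V'\}$), and let $V$ be the aligned enlargement of $V'$ obtained by adjoining the two neighbouring arcs to each maximal run of $V'$, so that $\overline{V'}\subseteq V$ and $V',V\in\Lambda_n$. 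The combinatorial consequence of the hypothesis gives $\card\{u_i\text{-eigenvalues in }V'\}\leq\card\{v_i\text{-eigenvalues in }V\}$. Finally, every $v_i$-eigenvalue $\mu$ in $V$ lies in an arc that is either an arc of $V'$ (hence within $1/2^n$ of a point of $W$ in the same arc) or adjacent to one (hence within $2/2^n=1/2^{n-1}$ of a point of $W$ in the neighbouring arc); in either case $\mu\in n_G(W)$. Thus $\card W\leq\card n_G(W)$, Hall's condition holds, and the matching exists.

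The delicate points to handle carefully are the distance bookkeeping — the factor $2$ producing the radius $1/2^{n-1}$ and the strictness of the inequality, consistent with \autoref{prop:dcuddcu} — and eigenvalues sitting exactly at partition endpoints, which can be absorbed either by choosing the (finitely many) partition points to avoid the finite spectra of $u_i$ and $v_i$, or by the joint-point conventions built into the construction of $\Lambda_n$. Assembling the blocks then yields a unitary $w=(w_i)$ with $\Vert wuw^*-v\Vert<1/2^{n-1}$, as desired.
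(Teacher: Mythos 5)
Your proposal is correct and follows essentially the same route as the paper: reduce to matrix blocks, reinterpret $\Cu(\varphi_u)(1_V)$ as an eigenvalue count, and verify Hall's condition for the bipartite graph on the eigenvalue multisets with edges at distance $<2/2^n$ by applying the comparison hypothesis to the minimal aligned open set containing a given set of eigenvalues and its one-arc enlargement. You additionally spell out the final permutation-matrix construction of $w$ from the matching, which the paper leaves implicit.
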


\begin{proof}
We assume that $B\simeq M_l(\mathbb{C})$ for some $l\in\N$ and the general case will follow as a consequence. 

Let $X:=\spectrum(u),Y:=\spectrum(v)$ be the respective spectra of $u,v$ and let \[ E:=\{(x,y)\in X\times Y \mid\Vert x-y\Vert<2/2^n\}.\] Consider the bipartite graph $G:=(X+Y,E)$. Observe that $X$ and $Y$ correspond to the eigenvalues of $u$ and $v$ respectively. Thus, both sets are finite subsets of $\T$ of cardinal $l$. The general idea is to prove that (ii) of \autoref{thm:marriage} holds for $G$ in order to deduce that there exists an injective map $\sigma:X\lhook\joinrel\longrightarrow Y$ such that $\Vert x-\sigma(x)\Vert< 2/2^n$, for any $x\in Y$. We then conclude that $\sigma$ is a bijection using that $\card (X)=\card (Y)=l$.

 Let $\Omega$ be a subset of $X$. It is easy to see that there exists a minimal element $g_\Omega\in\Lambda_n$ such that $\Omega\subseteq \supp g_\Omega$, in the sense that for any other $g'_\Omega\in\Lambda$ whose spectrum contains $\Omega$, then $g_\Omega\leq g'_\Omega$. As done in previous proofs, we can assume without loss of generality that the support of $g_\Omega$ is a connected open set of $\T$, since it has finitely many connected components. 
We know that there exist $0\leq l,r\leq 2^n$ such that $\supp g_\Omega=U_{l}\cup(\bigcup\limits_{k=l+1}^{r-1}\overline{U_{k}})\cup U_{r}$. Furthermore, the indicator map of the open set $U_{l-1}\cup\overline{\supp g_\Omega}\cup U_{r+1}$, that we denote by $h_\Omega$, is the least element of $\{h\in \Lambda_n\mid g_\Omega\ll h\}$. (Convention $U_{2^n+1}:=U_1$ and $U_{0}:=U_{2^n}$.) From hypothesis, we know that 
\[
\Cu(\varphi_u)(g_\Omega),\Cu(\varphi_v)(g_\Omega)\ll \Cu(\varphi_u)(h_\Omega),\Cu(\varphi_v)(h_\Omega).
\]
Since $\sup\limits_{y\in \supp h_\Omega}d(y,\supp g_\Omega)= 2/2^n$, we have that 
\[\card(\Omega)=\Cu(\varphi_u)(g_\Omega)\leq \Cu(\varphi_v)(h_\Omega)\leq \card(n_G(\Omega))\]
from which we obtain that $\card(\Omega)\leq\card(n_G(\Omega))$, for any $\Omega\subset X$. Applying \autoref{thm:marriage}, we conclude that there exists a bijection $\sigma:X\simeq Y$ such that $\Vert \sigma - \id\Vert < 2/2^n$, which ends the proof.

\end{proof}

\begin{cor}
Let $A$ be a unital $\AF$-algebra and let $\varphi_{u},\varphi_{v}:\mathcal{C}(\T)\longrightarrow A$ be unital $^*$-homomorphisms. If $\Cu(\varphi_u)=\Cu(\varphi_v)$, then $\varphi_u$ and $\varphi_v$ are approximately unitarily equivalent.
\end{cor}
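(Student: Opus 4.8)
The plan is to reduce the $\AF$ case to the finite dimensional \autoref{thm:liftFD} through the inductive limit machinery, the single genuinely analytic input being a Lipschitz-type continuity of the assignment $u\mapsto\Cu(\varphi_u)$ for the $\Cu$-metric $d_{\Cu}$. It suffices to produce, for every $\epsilon>0$, a unitary $w\in A$ with $\Vert wuw^*-v\Vert<\epsilon$; such a family assembles into a sequence witnessing $u\sim_{aue}v$, equivalently $\varphi_u\sim_{aue}\varphi_v$. I would write $A=\overline{\bigcup_i B_i}$ as the closure of an increasing union of finite dimensional unital subalgebras, so that $\Cu(A)=\varinjlim_i\Cu(B_i)$ by continuity of the functor $\Cu$, the connecting maps being $\sigma_{ij}=\Cu(\iota_{ij})$ induced by the unital inclusions $\iota_{ij}:B_i\hooklongrightarrow B_j$.

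The main obstacle is the estimate
\[
d_{\Cu}(\Cu(\varphi_u),\Cu(\varphi_{u'}))\leq\Vert u-u'\Vert
\]
for unitaries $u,u'$ of any $\CatCa$-algebra. I would establish it via functional calculus: given an open $V\subseteq\T$ and $r>\Vert u-u'\Vert$, any positive $f$ with $\supp f\subseteq V$ satisfies that $(f(u)-\theta)_+$ is Cuntz subequivalent to $g(u')$ for small $\theta>0$ and a suitable $g$ supported in the $r$-neighbourhood $V_r$ and equal to $1$ on a neighbourhood of $\overline{\supp f}$, since $\Vert f(u)-f(u')\Vert$ is controlled by the modulus of continuity of $f$ while $f(u')$ lies in the hereditary subalgebra generated by $g(u')$. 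Passing to suprema gives $\Cu(\varphi_u)(1_V)\leq\Cu(\varphi_{u'})(1_{V_r})$, and symmetrically, whence the inequality. Together with the routine fact that any unitary of $A$ is approximated by a nearby unitary of some $B_i$ (polar-decomposing a close element of $B_i$), this is the only analytic ingredient; everything else is bookkeeping along the limit.

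With this in hand, since $\Cu(\varphi_u)=\Cu(\varphi_v)$ one has $d_{\Cu}(\Cu(\varphi_u),\Cu(\varphi_v))=0$ (equal morphisms agree on every $1_V$). Fix $\epsilon>0$ and choose $n$ with $2/2^{n-2}<\epsilon$. I would approximate $u,v$ by unitaries $u_i,v_i$ in a common stage $B_i$ with $\Vert u-u_i\Vert,\Vert v-v_i\Vert<\eta$, writing $\varphi'_{u_i},\varphi'_{v_i}$ for the associated $^*$-homomorphisms into $A$. The continuity estimate and the triangle inequality for $d_{\Cu}$ give $d_{\Cu}(\Cu(\varphi'_{u_i}),\Cu(\varphi'_{v_i}))\leq 2\eta$; taking $2\eta\leq 1/2^{n+1}$ and invoking \autoref{prop:dcuddcu} yields $\Cu(\varphi'_{u_i})\underset{\Lambda_{n+1}}{\simeq}\Cu(\varphi'_{v_i})$. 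Inserting, for each $g'\ll g$ in $\Lambda_n$, an element $h\in\Lambda_{n+1}$ with $g'\ll h\ll g$ as in \autoref{lma:ccelmnt}, this upgrades to strict comparison $\Cu(\varphi'_{u_i})\underset{\Lambda_n}{\approx}\Cu(\varphi'_{v_i})$ (using that the morphisms preserve $\ll$). As these morphisms factor through $\Cu(B_i)$ via $\Cu(\varphi_{u_i}),\Cu(\varphi_{v_i})$, \autoref{thm:liftAF} produces $j\geq i$ with $\Cu(\varphi_{\iota_{ij}(u_i)})\underset{\Lambda_{n-1}}{\approx}\Cu(\varphi_{\iota_{ij}(v_i)})$ in $\Cu(B_j)$, where $\iota_{ij}(u_i),\iota_{ij}(v_i)$ are unitaries of the finite dimensional $B_j$. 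Then \autoref{thm:liftFD} furnishes a unitary $w\in B_j\subseteq A$ with $\Vert w\,\iota_{ij}(u_i)\,w^*-\iota_{ij}(v_i)\Vert<1/2^{n-2}$. Since $\iota_{ij}(u_i)=u_i$ and $\iota_{ij}(v_i)=v_i$ inside $A$, the triangle inequality gives $\Vert wuw^*-v\Vert<2\eta+1/2^{n-2}<2/2^{n-2}<\epsilon$. As $\epsilon$ was arbitrary, this yields $\varphi_u\sim_{aue}\varphi_v$.
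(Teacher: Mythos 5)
Your proposal is correct and follows essentially the same route as the paper: approximate $u,v$ by unitaries at a finite stage, control the distance between the induced $\Cu$-morphisms, pass to strict comparison on $\Lambda_{n}$ at a later finite stage via \autoref{thm:liftAF}, apply \autoref{thm:liftFD}, and conclude with the triangle inequality. The only (welcome) difference is organizational: you run the estimate through the genuine metric $d_{\Cu}$ and \autoref{prop:dcuddcu} rather than the $2$-relaxed triangle inequality for $dd_{\Cu,\mathcal{B}_{2^\infty}}$, and you make explicit --- and sketch a proof of --- the Lipschitz bound $d_{\Cu}(\Cu(\varphi_u),\Cu(\varphi_{u'}))\leq\Vert u-u'\Vert$, which the paper's proof uses silently when it bounds $dd_{\Cu,\mathcal{B}_{2^\infty}}(\alpha_i,\alpha)$ by $1/2^{n+4}$.
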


\begin{proof}
Let $(A_n,\phi_{nm})_n$ be an inductive system of finite dimensional $\CatCa$-algebras whose inductive limit is $A$. 
Let $u,v$ be unitary elements of $\mathcal{U}(A)$ and consider $\varphi_u,\varphi_v:\mathcal{C}(\T)\longrightarrow A$ their respective corresponding $^*$-homomorphisms. Let us write $\alpha:=\Cu(\varphi_u)$ and $\beta:=\Cu(\varphi_v)$ and assume that $\alpha=\beta$. We ought to show that $u$ and $v$ are approximately unitarily equivalent. 

Let $n\in\N$. We are going to find a unitary element $w\in A$ such that $\Vert wuw^*-v\Vert \leq 1/2^{n-2}$ from which corollary will follow. It is well-known that $\underset{n\in\N}{\cup} \phi_{n\infty}(A_n)$ is dense in $A$, therefore we can find unitary elements $u_i,v_i$ in some $A_i$ such that $\Vert\phi_{i\infty}(u_i)-u\Vert \leq 1/2^{n+4}$ and $\Vert\phi_{i\infty}(v_i)-v\Vert \leq 1/2^{n+5}$. Write $\alpha_i:=\Cu(\phi_{i\infty}\circ\varphi_{u_i})$ and $\beta_i:=\Cu(\phi_{i\infty}\circ\varphi_{v_i})$. We have that 
\begin{align*}
dd_{\Cu,\mathcal{B}_{2^\infty}}(\alpha_i,\beta_i)&\leq 2 ( dd_{\Cu,\mathcal{B}_{2^\infty}}(\alpha_i,\alpha)+dd_{\Cu,\mathcal{B}_{2^\infty}}(\alpha,\beta_i))\\
&\leq  2 dd_{\Cu,\mathcal{B}_{2^\infty}}(\alpha_i,\alpha)+4(dd_{\Cu,\mathcal{B}_{2^\infty}}(\alpha,\beta)+dd_{\Cu,\mathcal{B}_{2^\infty}}(\beta,\beta_i))\\
&\leq 2/2^{n+4}+4/2^{n+5}\\
&\leq 1/2^{n+2}.
\end{align*}
It is not hard to check that the latter implies $\alpha_i\underset{\Lambda_{n+1}}{\approx}\beta_i$. Using \autoref{thm:liftAF}, we can find some $j\geq i$ such that $\Cu(\phi_{ij}\circ\varphi_{u_i})\underset{\Lambda_{n}}{\approx}\Cu(\phi_{ij}\circ\varphi_{v_i})$. Let us rewrite $u_j:=\phi_{ij}(u_i)$ and $v_j:=\phi_{ij}(v_i)$. By \autoref{thm:liftFD}, there exists a unitary element $w$ in $A_j$ such that $\Vert wu_jw^* - v_j\Vert < 1/2^{n-1}$. Finally, we know that $\phi_{j\infty}$ is a contraction and hence we compute
\begin{align*}
\Vert \phi_{j\infty}(w)u\phi_{j\infty}(w)^*-v\Vert&\leq \Vert \phi_{j\infty}(w)[u-\phi_{j\infty}(u_j)]\phi_{j\infty}(w)^*\Vert+\Vert \phi_{j\infty}(w u_j w^*)-\phi_{j\infty}(v_j)\Vert\\&\,\,\,\,\,\,+\Vert \phi_{j\infty}(v_j)-v\Vert\\
&\leq 1/2^{n+4}+1/2^{n-1}+1/2^{n+5}\\
&\leq 1/2^{n-2}.
\end{align*}
\end{proof}


\begin{thebibliography}{99}
\bibitem{ADPS14} R. Antoine, M. Dadarlat, F. Perera, and L. Santiago, \emph{Recovering the Elliott invariant from the Cuntz semigroup}, Trans. Amer. Math. Soc. 366, (2014), no. 6, pp. 2907-2922.
\bibitem{APS11} R. Antoine, F. Perera, and L. Santiago, \emph{Pullbacks, $\mathcal{C}(X)$-algebras, and their Cuntz semigroups}, J. Funct. Anal. 260, (2011), no. 10, pp. 2844-2880. 
\bibitem{APT14} R. Antoine, F. Perera, and H. Thiel, \emph{Tensor products and regularity properties of Cuntz semigroups}, Mem. Amer. Math. Soc. 251, (2018), viii+191.
\bibitem{APT20} R. Antoine, F. Perera, and H. Thiel. \emph{Abstract bivariant Cuntz semigroups}, Int. Math. Res. Not. IMRN 2020, 5342-5386.
\bibitem{APT20b} R. Antoine, F. Perera, and H. Thiel, \emph{Abstract bivariant Cuntz semigroups. II}, Forum Math. 32 (2020), 45-62.
\bibitem{APT19} R. Antoine, F. Perera, and H. Thiel. \emph{Cuntz semigroups of ultraproduct C*-algebras}, J. Lond. Math. Soc. 102 (2020), 994-1029.
\bibitem{APRT21} R. Antoine, F. Perera, L. Robert, and H. Thiel, \emph{C*-algebras of stable rank one and their Cuntz semigroups}, Duke Math. J. (2020) (to appear)
\bibitem{APT09} P. Ara, F. Perera, and A. S. Toms, K-theory for operator algebras. Classification of C*-algebras, Aspects of operator algebras and applications, Contemp. Math., vol. 534, Amer. Math. Soc., Providence, RI, 2011, pp. 1-71.
\bibitem{C20one} L. Cantier, \emph{A unitary Cuntz semigroup for $\CatCa$-algebras of stable rank one}, J. Funct. Anal. 281, (2021), no. 9, 109175.
\bibitem{C20two} L. Cantier, \emph{Unitary Cuntz semigroups of ideals and quotients}, M{\"u}nster J. of Math., 14, (2021), pp. 585-606.
\bibitem{C22} L. Cantier, \emph{The unitary Cuntz semigroup on the classification of non simple $\CatCa$-algebras}. Available at: \url{https://arxiv.org/abs/2111.06926}.
\bibitem{CRS10}  A. Ciuperca, L. Robert, and L. Santiago, \emph{The Cuntz semigroup of ideals and quotients and a generalized Kasparov stabilization theorem}, J. Operator Theory 64, (2010), no. 1, pp. 155-169.
\bibitem{CEI08} K. T. Coward, G. A. Elliott, and C. Ivanescu, \emph{The Cuntz semigroup as an invariant for C*-algebras}, J. Reine Angew. Math. 623, (2008), pp. 161-193. 
\bibitem{C78} J. Cuntz, \emph{Dimension functions on simple C* algebras}, Math. Ann. 233, (1978), pp. 145-153
\bibitem{EK91} D. E. Evans and A. Kishimoto \emph{Compact group actions on UHF algebras obtained by folding the interval}, J. Funct. Anal., Volume 98, Issue 2, (1991), pp. 346-360.
\bibitem{FP93} R. Fritsch and R. A. Piccinini, \emph{CW-complexes and Euclidean spaces}, Rendiconti del Circolo Mat. di Palermo 24, (1993), pp. 79-95.
\bibitem{GJL20} G. Gong, C. Jiang and L. Li, \emph{Hausdorffified algebraic $\K_1$-group and invariants for C*- algebras with the ideal property}, Ann. K-Theory, vol. 5 no. 1, (2020), pp. 43-78.
\bibitem{JSV18}B. Jacelon, K. Strung and A. Vignati \emph{Optimal transport and unitary orbits in C*-algebras} J. Funct. Anal. 281, (2021), no. 5, 109068.
\bibitem{L01} H. Lin, \emph{An introduction to the classification of amenable C*-algebras}. World Scientific Publishing Co., Inc., River Edge, NJ, (2001), xii+320.
\bibitem{R12} L. Robert, \emph{Classification of inductive limits of one-dimensional NCCW complexes}, Adv. Math. 231, (2012), no. 5, pp. 2802-2836.
\bibitem{RS09} L. Robert and L. Santiago, \emph{Classification of $^*$-homomorphisms from $\mathcal{C}_0(0,1]$ to a C*-algebra}, J. Funct. Anal. 258, (2010), no. 3., pp. 869-892.
\bibitem{RS19} L. Robert and L. Santiago. \emph{A revised augmented Cuntz semigroup}. Math. Scand., (2019) (To appear.) Available at \url{https://arxiv.org/abs/1904.03690}.
\bibitem{T19} H. Thiel, \emph{The Cuntz Semigroup}, Lecture notes from a course at the University of M\"unster, winter semester 2016-17. Available at: \url{https://ivv5hpp.uni-muenster.de/u/h_thie08/teaching/CuScript.pdf}.
\bibitem{TV21} H. Thiel, and E. Vilalta, \emph{Covering dimension of Cuntz semigroups II}, Int. J. Math. 32 (2021), no. 14, 2150100.
\bibitem{Th94} K. Thomsen, \emph{Inductive Limits of Interval Algebras: The Tracial State Space}, Amer. J. Math., Vol. 116, No. 3, (1994), pp. 605-620.
\bibitem{T08} A. S. Toms, \emph{On the classification problem for nuclear C*-algebras}, Ann. of Math. (2) 167, (2008), no. 3, pp. 1029-1044.
\end{thebibliography}
\end{document}